\documentclass[12pt]{amsart}
\usepackage{amssymb,amsfonts,amsmath,amsthm,cite,verbatim}
\usepackage{xcolor}
\usepackage{tikz}
\usepackage{graphicx}
\usepackage[left=2.5cm, right=2.5cm, top=3.5cm, bottom=3.5cm]{geometry}
\usepackage{booktabs}
\usepackage{array}
\usepackage{hyperref}
\usepackage[normalem]{ulem}
\usepackage{appendix}
\usepackage{indentfirst}
\setlength{\parindent}{6mm}
\allowdisplaybreaks[4]

\newtheorem{theorem}{Theorem}[section]
\newtheorem{theo}[theorem]{Theorem}
\newtheorem{prop}[theorem]{Proposition}
\newtheorem{lem}[theorem]{Lemma}
\newtheorem{rem}[theorem]{Remark}

\newtheorem{cor}[theorem]{Corollary}
\newtheorem{ex}[theorem]{Example}

\newtheorem{conj}[theorem]{Conjecture}

\makeatletter \@addtoreset{equation}{section}



\DeclareMathOperator*{\CT}{CT}
\DeclareMathOperator*{\res}{Res}

\newcommand{\NN}{\mathbb{N}}

\def\x{\boldsymbol{x}}
\def\y{\boldsymbol{y}}

\def\B{\mathcal{B}}
\def\H{\mathcal{H}}

\def\m{\boldsymbol{m}}
\def\b{\boldsymbol{b}}
\def\RR{\mathbb{R}}

\author{Guoce Xin$^{1}$ and Chen Zhang$^{2,*}$}

\address{ $^{1, 2}$School of Mathematical Sciences,  Capital Normal University,
 Beijing 100048,  PR China}

\email{$^1$\texttt{guoce\_xin@163.com}\\ $^2$\texttt{ch\_enz@163.com}}

\thanks{$*$ Corresponding author.}

\begin{document}
\title{A variation of the Morris constant term identity}

\date{\today}

\begin{abstract}
Morris constant term identity is important due to its equivalence with the well-known Selberg integral.
We find a variation of the Morris constant term, denoted $h_n(t)$, in the study of the Ehrhart polynomial $H_n(t)$ of the $n$-th Birkhoff polytope,
which consists of all doubly stochastic matrices of order $n$. The constant term $h_n(t)$ corresponds to a particular constant term in the study of 
$H_n(t)$. We give a characterization of $h_n(t)$ as a polynomial of degree $(n-1)^2$ with additional nice properties involving the Morris constant term identity. 
We also construct a recursion of $h_n(t)$ using a similar technique for the  proof of the Morris constant term identity by Baldoni-Silva and Vergne, and by
Xin. This method is applicable to any integer $n\geq 3$. We have produced explicit formulas of $h_n(t)$ for $3 \le n \le 29$ without difficulty. 
\end{abstract}

\maketitle

\noindent
\begin{small}
 \emph{Mathematics Subject Classification}: Primary 05-08; Secondary 03D80, 05A19, 05E05.
\end{small}

\noindent
\begin{small}
\emph{Keywords}: Morris constant term identity; recursion; Ehrhart polynomial; Birkhoff polytope.
\end{small}

\section{Introduction}
We are interested in the following form of Morris constant term identity, which was first proved by Baldoni-Silva and Vergne using total residue
in \cite{2001-Baldoni-Silva}, and then given a simplified proof in \cite{2004-Xin-phd}.
\begin{theo}[\cite{2001-Baldoni-Silva,2004-Xin-phd}]\label{theo-Morris}
If $k_1, k_2, k_3 \in \NN$ and $k_1 + k_2 \ge 2$, then
\begin{equation}\label{e-CTMorris}
\CT_{\x} M(n; k_1, k_2, k_3) = \prod_{j=0}^{n-1} \frac{\Gamma(1 + \frac{k_3}{2}) \Gamma(k_1 + k_2 - 1 + (n + j -1) \frac{k_3}{2})}{\Gamma(1 + (j + 1) \frac{k_3}{2}) \Gamma(k_1 + j \frac{k_3}{2}) \Gamma (k_2 + j \frac{k_3}{2})}.
\end{equation}
where $\displaystyle\CT_{\x}f$ denotes the constant term of the Laurent series $f$ with respect to the variables $\x:=(x_1, x_2, \dots,x_n)$, and
\begin{equation}\label{e-Morris}
M(n; k_1, k_2, k_3) := \frac{1}{\prod_{i=1}^n x_i^{k_1 - 1} \prod_{i=1}^n (1-x_i)^{k_2} \prod_{1 \le i<j \le n} (x_i-x_j)^{k_3}}.
\end{equation}
In particular, we agree that $M(0; k_1, k_2, k_3) = 1$.
\end{theo}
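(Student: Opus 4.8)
The plan is to prove \eqref{e-CTMorris} by induction on $n$, in the spirit of the total-residue argument of Baldoni-Silva--Vergne \cite{2001-Baldoni-Silva} and the partial-fraction method of Xin \cite{2004-Xin-phd}. Throughout I interpret $M(n;k_1,k_2,k_3)$ as an element of the field of iterated Laurent series in $x_1,\dots,x_n$, in which each factor $(x_i-x_j)^{-k_3}$ with $i<j$ is expanded for $x_j$ smaller than $x_i$; then $\CT_{\x}$ is well defined and $\CT_{x_i}f=\res_{x_i}(f/x_i)$. The hypothesis $k_1+k_2\ge 2$ ensures that the coefficient being extracted is the genuinely interior one, so that all expansions and the induction are legitimate.

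For the base case $n=1$ the Vandermonde product is empty and
\[
\CT_{x_1}\frac{1}{x_1^{k_1-1}(1-x_1)^{k_2}}=[x_1^{\,k_1-1}]\,(1-x_1)^{-k_2}=\binom{k_1+k_2-2}{k_1-1}=\frac{\Gamma(k_1+k_2-1)}{\Gamma(k_1)\Gamma(k_2)},
\]
which is exactly the $j=0$ term on the right of \eqref{e-CTMorris}, the two copies of $\Gamma(1+\tfrac{k_3}{2})$ cancelling. Degenerate values such as $k_1=0$ (forcing $k_2\ge 2$) are consistent, since then a $\Gamma$ in the denominator acquires a pole and both sides vanish.

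For the inductive step I single out the largest variable $x_1$ and factor
\[
M(n;k_1,k_2,k_3)=\frac{M(n-1;k_1,k_2,k_3)(x_2,\dots,x_n)}{x_1^{\,k_1-1}(1-x_1)^{k_2}\prod_{j=2}^{n}(x_1-x_j)^{k_3}}.
\]
Applying $\CT_{x_1}=\res_{x_1}(\cdot/x_1)$ and decomposing the $x_1$-factor into partial fractions in $x_1$, one checks that in the chosen Laurent expansion the poles at $x_1=0$ and $x_1=x_j$ contribute nothing (they produce only strictly negative powers of $x_1$), so that the constant term is entirely accounted for by the order-$k_2$ pole at $x_1=1$. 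Reading off that contribution amounts to expanding $x_1^{-(k_1-1)}\prod_{j}(x_1-x_j)^{-k_3}$ to order $k_2-1$ at $x_1=1$, and thereby reduces the $n$-variable constant term to a combination of $(n-1)$-variable ones with the exponents shifted.

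The step I expect to be hardest is that this reduction does not stay inside the Morris family: evaluating near $x_1=1$ creates extra factors $(1-x_j)$ that merge with the existing $(1-x_j)^{-k_2}$ and with the Vandermonde, so the reduced object is a strictly more general constant term. The resolution, as in \cite{2001-Baldoni-Silva,2004-Xin-phd}, is to enlarge the Morris family to one that is closed under the single-variable reduction --- equivalently, to phrase everything as a total residue on the arrangement $\{x_i=0\}\cup\{x_i=1\}\cup\{x_i=x_j\}$ --- prove the recursion there, and then verify that, under the inductive hypothesis, the residue contributions telescope against the Gamma quotients of \eqref{e-CTMorris}; matching the index shift $j\mapsto j+1$ in that product with the parameter shifts produced by the $k_2$-fold pole is the delicate bookkeeping on which the argument turns. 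As a fallback I would use the constant-term analogue of Aomoto's recursion: because $\res_{x_i}\partial_{x_i}g=0$ for every iterated Laurent series $g$, suitable choices of $g$ give linear relations that decrease $k_1$ (and, by symmetry, $k_2$) one unit at a time across the $n$ variables, which together with the $n=1$ evaluation determine $\CT_{\x}M(n;k_1,k_2,k_3)$ uniquely and force the closed form.
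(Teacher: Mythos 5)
First, a point of calibration: the paper never proves Theorem \ref{theo-Morris} --- it is quoted from \cite{2001-Baldoni-Silva,2004-Xin-phd} --- so your attempt must stand on its own. What you actually carry out is correct: the base case $n=1$ gives $\binom{k_1+k_2-2}{k_1-1}=\Gamma(k_1+k_2-1)/(\Gamma(k_1)\Gamma(k_2))$, which matches the $j=0$ factor of \eqref{e-CTMorris}, and in the expansion $1>x_1>\cdots>x_n>0$ the constant term in $x_1$ is indeed accounted for entirely by the order-$k_2$ pole at $x_1=1$, since the partial-fraction pieces at $x_1=0$ and $x_1=x_j$ contribute only negative powers of $x_1$. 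This is exactly the mechanism of Lemmas \ref{lem-contri} and \ref{lem-Ai0} of the paper, which the paper itself uses to prove the $k_2=1$ reduction in Lemma \ref{lem-Morris}.

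The genuine gap is that the inductive step --- which is the entire substance of the theorem --- is never executed. You correctly diagnose the obstruction: extracting the order-$(k_2-1)$ jet at $x_1=1$ produces, via the Leibniz rule, factors $(1-x_j)^{-k_3-l_j}$ with \emph{unequal} shifts $l_j$ (compare the terms $G_{l_1,\dots,l_n}$ in the proof of Lemma \ref{lem-CT2LRe}), so the reduction leaves the Morris family; this is precisely why the paper can dispatch the $k_2=1$ case in a few lines but quotes the general theorem. Your ``resolution,'' however, is a promise rather than an argument: the enlarged, reduction-closed family is not defined, the recursion it satisfies is not written down, and no verification is given that the Gamma product on the right of \eqref{e-CTMorris} satisfies that recursion with matching initial conditions (nor that the recursion determines the answer uniquely) --- you yourself label this ``the delicate bookkeeping on which the argument turns'' and then omit it. The fallback via $\res_{x_i}\partial_{x_i}g=0$ is equally unexecuted, and the paper's Section 4 (Proposition \ref{prop-equation-ell} and the linear algebra that follows) illustrates how much work ``suitable choices of $g$'' demands even to obtain a recursion for one related quantity, let alone a closed-form evaluation. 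As it stands, the proposal is a correct strategy outline in the spirit of the cited proofs, but not a proof.
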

Here and what follows, rational functions in $\x$ are explained as their Laurent series expansions according to $1> x_1> x_2 >\cdots>x_n>0$, unless specified otherwise.

Morris's original constant term identity (in his PhD thesis \cite{1982-Morris}) refers to the negative even $k_3$ case. It is equivalent to the well-known Selberg integral \cite{1944-Selberg}. See \cite{2008-Forrester-Selberg} for a nice overview for the importance of the Selberg integral.
Zeilberger \cite{1999-Zeilberger} first considered the case $(k_1,k_2,k_3)=(1,2,1)$ of Theorem \ref{theo-Morris}, and proved that the volume of the Chan-Robbins-Yuen polytope equals the product of Catalan numbers. Theorem \ref{theo-Morris} was further studied in \cite{2021-symMorris}.

In this paper, we mainly study a variation of the Morris constant term defined in \eqref{e-barH21}.
This constant term arises as a particular case of the Ehrhart polynomial $H_n(t)$ of the $n$-th Birkhoff polytope $\B_n$.

For a positive integer $n$, $\B_n$ is usually described as the set of all $n \times n$ \emph{doubly stochastic} matrices, where a matrix $B \in \RR^{n \times n}$ is said to be doubly stochastic if all elements of $B$ are nonnegative and the sum of each row and each column is equal to $1$.
The $H_n(t)$ counts the number of the nonnegative integral matrices such that every row sum and every column sum equal to the nonnegative number $t$. This has been studied by many researchers, e.g. \cite{2003-BeckPixton,Clara,John,Per,Diaconis,Loera,2001-Baldoni-Silva,2022-CTTypeA}.
It was conjectured by Anand, Dumir and Gupta \cite{1966-Anand-Hn} and proved by Ehrhart \cite{1973-Ehrhart} and Stanley \cite{1973-Stanley-Hn} that $H_n(t)$ is a polynomial in $t$ of degree $(n-1)^2$ for any fixed positive integer $n$. And the following theorem holds.
\begin{theo}[\cite{1973-Stanley-Hn,1976-Stanley-Hn}]\label{theo-Stanley}
For any positive integer $n$, we have
\[
\sum_{t \ge 0} H_n(t) y^t = \frac{1}{(1 - y)^{(n-1)^2+1}} \sum_{i=0}^{(n-1)(n-2)} a_i y^i,
\]
where the $a_i$'s are nonnegative integers satisfying $a_0 = 1$ and $a_i = a_{(n-1)(n-2)-i}$ for all $i$.
\end{theo}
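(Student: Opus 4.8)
The plan is to treat this as a statement in Ehrhart theory, using the given fact that $H_n(t)$ is a polynomial of degree $d := (n-1)^2$. Write $E_n(y) := \sum_{t \ge 0} H_n(t) y^t$. Since $H_n$ is a degree-$d$ polynomial, expanding it in the basis $\left\{\binom{t+d-i}{d}\right\}_{i=0}^{d}$ and using $\sum_{t\ge 0}\binom{t+d-i}{d} y^t = y^i/(1-y)^{d+1}$ gives at once the stated shape
\[
E_n(y) = \frac{A(y)}{(1-y)^{d+1}}, \qquad A(y) = \sum_{i=0}^{d} a_i y^i .
\]
The coefficients $a_i$ are integers because $H_n$ is integer-valued (it is a counting function), and $a_0 = A(0) = E_n(0) = H_n(0) = 1$, since the only matrix with all line sums $0$ is the zero matrix. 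This already yields the generating-function form, integrality, and $a_0 = 1$, and bounds $\deg A \le d$.

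For the \emph{nonnegativity} $a_i \ge 0$, which I expect to be the main obstacle, I would realize $A(y)$ as the $h$-vector of a Cohen--Macaulay graded algebra. Concretely, let $R$ be the semigroup ring of lattice points in the cone over $\B_n$, graded by dilation, so that its Hilbert series is exactly $E_n(y)$. The key input is that $\B_n$ is a normal lattice polytope (it has the integer decomposition property: every lattice point of $t\B_n$ is a sum of $t$ lattice points of $\B_n$, since nonnegative integer matrices with constant line sums decompose into permutation matrices by repeated application of the Birkhoff--von Neumann / Hall marriage argument). Normality makes $R$ a normal affine semigroup ring, hence Cohen--Macaulay by Hochster's theorem; the $h$-vector of a standard graded Cohen--Macaulay algebra is nonnegative, giving $a_i \ge 0$. (Equivalently one may simply invoke Stanley's general nonnegativity theorem for $h^*$-vectors of lattice polytopes.) This commutative-algebra step is the deep part of the argument; everything else is comparatively formal.

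Finally, the sharp degree and the palindromic symmetry both come from a self-reciprocity of $H_n$. Let $H_n^\circ(t)$ count \emph{strictly positive} integer matrices with all line sums $t$, i.e.\ the interior lattice points of $t\B_n$. Subtracting the all-ones matrix $J$ is a bijection onto nonnegative integer matrices with line sums $t-n$, so $H_n^\circ(t) = H_n(t-n)$ for $t \ge n$, while $H_n^\circ(t)=0$ for $1 \le t \le n-1$ (a positive row cannot sum to less than $n$). Summing gives $\sum_{t\ge 1} H_n^\circ(t)\, y^t = y^n E_n(y)$. On the other hand, Ehrhart--Macdonald reciprocity states $\sum_{t \ge 1} H_n^\circ(t) y^t = (-1)^{d+1} E_n(1/y)$ as rational functions. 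Equating the two and simplifying the factor $(1-1/y)^{d+1} = (-1)^{d+1}(1-y)^{d+1} y^{-(d+1)}$ collapses to
\[
A(y) = y^{\,d+1-n}\, A(1/y).
\]
Since $a_0 = 1 \ne 0$, comparing lowest and highest exponents forces $\deg A = d+1-n = (n-1)(n-2)$, so $a_i = 0$ for $i > (n-1)(n-2)$ (matching the stated summation range), and the displayed identity is precisely the palindrome $a_i = a_{(n-1)(n-2)-i}$, with $a_{(n-1)(n-2)} = a_0 = 1$. Thus the functional equation delivers the exact degree and the symmetry simultaneously, leaving nonnegativity as the only substantial hurdle.
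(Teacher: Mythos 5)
The paper does not prove this statement at all: Theorem \ref{theo-Stanley} is quoted as background, with the proof deferred to the cited papers of Stanley, so there is no internal proof to compare against. Judged on its own merits, your proposal is correct and complete (modulo the standard results it invokes, all of which are legitimately citable): the binomial-basis expansion gives the rational form with $\deg A \le (n-1)^2$ and integer $a_i$; $a_0 = H_n(0) = 1$ is right; normality of $\B_n$ via Birkhoff--von Neumann plus Hochster's theorem (or, more simply, Stanley's nonnegativity theorem for $h^*$-vectors, which needs no normality) gives $a_i \ge 0$; and the bijection $M \mapsto M - J$ between positive matrices with line sums $t$ and nonnegative matrices with line sums $t-n$, combined with Ehrhart--Macdonald reciprocity, yields the functional equation $A(y) = y^{(n-1)(n-2)}A(1/y)$, which simultaneously pins down $\deg A = (n-1)(n-2)$ and the palindromicity. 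This is essentially the classical argument of the cited references, with one historical difference: Stanley obtained the symmetry from the Gorenstein property of the associated semigroup ring, whereas you use the equivalent geometric fact that the interior of the cone over $\B_n$ is the translate of the cone by the all-ones matrix; the two formulations are interchangeable, and yours is the more common modern (polytope-theoretic) phrasing.
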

It is the first conjecture stated in \cite{1983-Stanley} that the $(a_0, a_1, \dots, a_{(n-1)(n-2)})$ is unimodal, i.e., $a_0 \le a_1 \le \cdots \le a_{\lfloor (n-1)(n-2)/2 \rfloor}$. Athanasiadis \cite{2005-Athanasiadis} proved this conjecture using McMullen's $g$-theorem \cite{1971-gtheorem}.

The formulas $H_1(t) = 1$ and $H_2(t) = t + 1$ are trivial. The first nontrivial case was computed by MacMahon \cite{1960-MacMahon} as $H_3(t) = 3 \binom{t+3}{4} + \binom{t+2}{2}$.
The record is kept by Beck and Pixton \cite{2003-BeckPixton}, who obtained an explicit expression for $H_9(t)$ and the leading coefficient of $H_{10}(t)$ (corresponding to the volume of $\B_{10}$) using residue computation.
For a positive integer $n$, $H_n(t)$ can be written as \cite{2003-BeckPixton,2022-CTTypeA}
\[
H_n(t)= \sum_{\m} \binom{n}{\m} \CT_{\x} \H^{\m},
\]
where $\m := (m_1, m_2, \dots, m_n)$ ranges over all weak compositions of $n$ with $n$ components (that is, the $m_i$'s are nonnegative integers such that $\sum_{i=1}^nm_i=n$), and
\[
\H^{\m} := \prod_{i=1}^{n}\frac{ x_i^{(m_i-1)t}}{\prod_{j=1,j\neq i}^n (1-x_j/x_i)^{m_i}}.
\]
Among all these $\m$, the case $\m = (2, 1^{n-2}, 0)$ is the hardest to evaluate using usual methods.
We observe that $\CT_{\x} \H^{(2, 1^{n-2}, 0)}$ is similar to the Morris constant term. To be precise,
\[
\H^{(2, 1^{n-2}, 0)} =(-1)^{\frac{(n-1)(n-2)}{2}} \frac{(x_n/x_1)^{-t} \prod_{i=2}^{n - 1} (x_i/x_1)^{2 i - n} \prod_{i=1}^{n-1} (1 - x_n/x_i)}{\prod_{i=2}^{n} (1 - x_i/x_1)^3 \prod_{2\le i < j \le n} (1-x_j/x_i)^2}.
\]
Since $\H^{(2, 1^{n-2}, 0)}$ is homogeneous, we may set $x_1 = 1$ when computing $\CT_{\x} \H^{(2, 1^{n-2}, 0)}$. That is, we can write
\begin{equation}\label{e-CTH21}
h_n(t) := (-1)^{\frac{(n-1)(n-2)}{2}} \CT_{\x} \H^{(2,1^{n-2},0)} = \CT_{\x} \bar \H_n,
\end{equation}
where
\begin{equation}\label{e-barH21}
\bar \H_n := \frac{(1-x_n) \prod_{i=2}^{n-1} x_i^{n-1} \prod_{i=2}^{n-1} (x_i-x_n)}{x_n^t \prod_{i=2}^n (1-x_i)^{3} \prod_{2 \le i<j \le n} (x_i-x_j)^{2}}.
\end{equation}

Now we present our main result as follows. It will be proved in Section \ref{s-proof}.
\begin{theo}\label{theo-main}
Let $h_n(t)$ and $M(n; k_1, k_2, k_3)$ be as in \eqref{e-CTH21} and \eqref{e-Morris}, respectively.
For any $n \ge 2$, we have
\begin{enumerate}
  \item $h_n(t)$ is a polynomial in $t$ of degree $(n-1)^2$ with leading coefficient $ \frac{1}{(n-1)^2!} \CT_{\x} M(n-1; 1, 1, 2)$.

  \item $h_n(t) = (-1)^{n-1} h_n(-t-n)$.

  \item Let $N=\big\lfloor \frac{(n-1)^2}{4} \big\rfloor$. Then $h_n(t) = P_n(t)\prod_{i=-N - n + 1 }^{N - 1} (t-i)$, where $P_n(t)$ is a polynomial in $t$ of degree $(n-1)(n-2) - 2 N$.

  \item Let $N$ be as in part (3). Then
  \[
  h_n(N) = \Big(\CT_{\x} M\Big( \Big\lfloor \frac{n-1}{2} \Big\rfloor; 2 + \chi(n \text{ is even}), 1, 2 \Big) \Big)^2,
  \]
  where $\chi(\text{true}) = 1$ and $\chi(\text{false}) = 0$.
\end{enumerate}
\end{theo}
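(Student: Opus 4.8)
The plan is to peel off the single remaining variable $x_n$ last. Writing $\bar{\H}_n = x_n^{-t}G_n$ with $G_n$ free of $t$, I first form the univariate rational function $\Psi(x_n):=\CT_{x_2,\dots,x_{n-1}}G_n$, so that $h_n(t)=[x_n^t]\Psi(x_n)$, the coefficient of $x_n^t$ in the expansion of $\Psi$ according to $1>x_2>\dots>x_n>0$. For part (1) the key step is to show that, off $x_n=0$ and $x_n=\infty$, the only pole of $\Psi$ is at $x_n=1$, of order $(n-1)^2+1$; this pole is produced by the iterated residues at $x_i=x_n$ and $x_i=1$ accumulated while taking $\CT_{x_2,\dots,x_{n-1}}$, and its presence forces $[x_n^t]\Psi$ to be a polynomial in $t$ of degree $(n-1)^2$ for $t\ge 0$. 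The leading coefficient is then $\frac{1}{(n-1)^2!}\lim_{x_n\to1}(1-x_n)^{(n-1)^2+1}\Psi(x_n)$, equivalently the $t\to\infty$ volume limit $\lim h_n(t)/t^{(n-1)^2}$; taking this top residue collapses the factors $1-x_n$, $x_i-x_n$ and $(1-x_i)^3$ into a Morris integrand in the remaining $n-1$ variables with parameters $(k_1,k_2,k_3)=(1,1,2)$, giving $\CT_{\x}M(n-1;1,1,2)$.

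For the functional equation in part (2) I would apply the reciprocal substitution $x_i\mapsto1/x_i$ for $2\le i\le n$. A direct bookkeeping of the sign contributions of the factors $1-x_n$, $x_i-x_n$, $1-x_i$ and $x_i-x_j$ collapses to the clean identity $\bar{\H}_n(1/x_2,\dots,1/x_n;t)=\bar{\H}_n(x_2,\dots,x_n;-t-n)$, so that the substitution implements exactly $t\mapsto-t-n$. The delicate point is that the constant-term operator is \emph{not} invariant under $x_i\mapsto1/x_i$ in the iterated Laurent series field ordered by $1>x_2>\dots>x_n>0$: re-expanding in the new variables reverses the order, and the residue-at-infinity correction from this reversal is precisely what produces the sign $(-1)^{n-1}$. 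I would make this rigorous by combining the reciprocal substitution with a reversal of the variable order and applying the univariate constant-term reciprocity law one variable at a time.

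Part (3) I would then deduce by pairing the symmetry of part (2) with a direct vanishing argument. The functional equation forces the integer zero set of $h_n$ to be symmetric about $t=-n/2$, so it suffices to produce roots on one side: I would show $h_n(t)=0$ for the consecutive integers $t=-(n-1),\dots,-1,0,1,\dots,N-1$ directly from the bounded numerator degree and $x_n$-valuation of $\Psi$ in the form $\Psi=R(x_n)/(1-x_n)^{(n-1)^2+1}$, and then reflect through $t\mapsto-t-n$ to recover the remaining zeros at $t=-n,\dots,-N-n+1$. This accounts for all integers in $[-N-n+1,N-1]$, and the degree bookkeeping $(n-1)(n-2)-2N$ for $P_n$ plus $2N+n-1$ for the product equals $(n-1)^2$, so no extra roots are forced and $P_n$ absorbs the rest.

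Finally, for the value at $t=N$ in part (4) I would evaluate the coefficient $[x_n^N]\Psi$ at this boundary point by a residue computation and expect the integrand to decouple: at precisely $t=N$ the $n-1$ variables should split into two symmetric groups of $\lfloor(n-1)/2\rfloor$ variables, the parity defect being recorded by the shift $2+\chi(n\text{ is even})$ in the first Morris parameter, with each group contributing an independent factor $\CT_{\x}M(\lfloor(n-1)/2\rfloor;2+\chi(n\text{ is even}),1,2)$ and hence producing the square. Establishing this factorization---showing that the cross terms between the two halves cancel exactly when $t=N$---is the principal obstacle I anticipate; it is the most computation-heavy step, and I expect it to reflect the same splitting-at-the-center phenomenon that underlies the symmetry of part (2).
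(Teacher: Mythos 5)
Your outlines for parts (1) and (2) take a genuinely different route from the paper (which never forms the univariate generating function $\Psi(x_n)$; it instead derives two alternative constant-term representations of $h_n(t)$ via Jacobi-type changes of variables), and that route is plausible — but even there you leave real work unproven. For part (1) you assert, without a method, that $\Psi$ has its only finite nonzero pole at $x_n=1$, of order exactly $(n-1)^2+1$, and that it is proper; polynomiality of $h_n(t)$ for \emph{all} $t\ge 0$ (not just large $t$) needs exactly these facts. Moreover your leading-coefficient claim is off by one: collapsing at $x_n=1$ leaves the $n-2$ variables $x_2,\dots,x_{n-1}$, and what comes out directly is $\CT_{\x} M(n-2;3,1,2)$, not an $(n-1)$-variable Morris term; identifying this with $\CT_{\x} M(n-1;1,1,2)$ requires the nontrivial symmetry $\CT_{\x} M(n;k_1,k_2,k_3)=\CT_{\x} M(n;k_2,k_1,k_3)$ together with the reduction $\CT_{\x} M(n+1;k_1,1,k_3)=\CT_{\x} M(n;k_1,k_3+1,k_3)$ (the paper's Corollary 2.13 and Lemma 2.12). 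For part (2), your rational-function identity $\bar \H_n(1/x_2,\dots,1/x_n;t)=\bar \H_n(x_2,\dots,x_n;-t-n)$ is correct, and you rightly flag the re-expansion issue, but the statement that the order-reversal correction "is precisely $(-1)^{n-1}$" is the entire content of the claim, and running univariate reciprocity "one variable at a time" through the coupling factors $(x_i-x_j)^2$ is not routine. A related trap: for negative $t$ the constant term of $\bar \H_n$ does \emph{not} compute the polynomial continuation of $h_n$ (already for $n=2$: $\CT_{x_2}\bar\H_2=0$ for $t\le -1$, while $h_2(-2)=-1$), so the whole argument must be organized as an identity of polynomials in $t$; the paper achieves this by moving $t$ into the exponent of $(1-x_1)$ before substituting.

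The genuine gaps are parts (3) and (4), where the key idea is absent. The vanishing $h_n(t)=0$ for $0\le t\le N-1$ is \emph{equivalent} to your claimed $x_n$-valuation $x_n^{N}\mid R(x_n)$, so invoking "the $x_n$-valuation of $\Psi$" is circular — you give no way to establish it; likewise the zeros at $t=-1,\dots,-(n-1)$ require a degree bound on $R$ you do not prove. And for part (4) you explicitly concede that the factorization into a square of Morris constant terms is "the principal obstacle" you cannot yet establish. The paper resolves both with one device: the change of variables $x_i=y_1y_2\cdots y_i$ (justified by the matrix form of Jacobi's lemma), under which the exponent of $y_i$ in $\bar\H_n$ becomes $-t+(i-1)(n-i)$ and every factor $1-x_j/x_i$ becomes $1-y_{i+1}\cdots y_j$. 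For $t<N$ the exponent of the middle variable $y_r$, $r=\lfloor (n+1)/2\rfloor$, is strictly positive, so the constant term in $y_r$ vanishes — that is part (3)'s nonnegative zeros. At $t=N$ the middle exponent(s) are exactly zero, so the expression is a power series in the middle variable(s), which may be set to $0$; this kills every cross factor $1-y_{i+1}\cdots y_j$ spanning the middle, splitting the constant term into a product of two identical Morris constant terms — that is part (4), and the source of the square. Without this mechanism (or a substitute for it), parts (3) and (4) of your proposal remain conjectural.
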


It is trivial that $h_2(t) = t+1$. In what follows, we always assume $n \ge 3$.

According to Theorem \ref{theo-main}, for a fixed $n$, we can construct the polynomial $h_n(t)$ by the method of undetermined coefficients. By part (1)
we need the values of $h_n(t)$ at $(n-1)^2$ distinct $t$'s; Part (2) cuts this number by half; Part (3) gives some roots of $h_n(t)$; Part (4) gives $h_n(N)$. 
This leaves us to compute $h_n(t)$ at $\binom{n-1}{2} - N-1$ different  $t>N$. Direct computation of these $h_n(t)$ is already hard when $n\ge 8$.
Instead, we will derive a recursion of $h_n(t)$ in Section \ref{s-recursion}, at least for $n\leq 29$. The recursion is derived using a relation established using a similar technique for the proof of the Morris constant term identity.

In particular, we obtain $h_{10} (t) = P_{10}(t) \prod_{i = -29}^{19} (t-i) $ with
\begin{align*}
P_{10}(t)= &\frac{321375112314406569619200000}{81!}\ t^{32}+ \cdots,
\end{align*}
which is too long to put here.
The explicit formulas of $h_n(t)$ for $3 \le n \le 29$ are available at \cite{datalink}. The data shows that $P_n(t)$ is divisible by $t(t+n)$ when $n\geq 4$. 

The rest of this paper is structured as follows.
In Section \ref{s-tool}, we introduce two versions of Jacobi's change of variable formula, and a constant term concept in \cite{2022-CTTypeA}. These will be the main tools
in our proof of Theorem \ref{theo-main}.
In Section \ref{s-proof}, we complete the proof of Theorem \ref{theo-main} by dividing it into some lemmas.
In Section \ref{s-recursion}, we introduce a new set of constant terms $D_n(\ell, t, k_1, k_2, k_3)$ involving elementary symmetric functions. Then we obtain a relation among them and use these relations to derive a recursion of $h_n(t)$. By combining with Theorem \ref{theo-main}, we can compute the explicit formula of $h_n(t)$ for any $n$. We provide Examples \ref{ex-neq4} and \ref{ex-h5} to explain our method. We also give a conjectured nice formula of a tri-diagonal determinant (See Conjecture \ref{conj-detC}).
Section \ref{s-conj} gives a conjecture about the generating function of $h_n(t)$.

\section{Jacobi's change of variable formula and a constant term concept}\label{s-tool}
In this section, we introduce Jacobi's change of variable formulas and a basic constant term concept.
These will be used in our proof.

We use both constant term and residue, though they are equivalent in some sense.
For a Laurent series $f(x)$ at $x=x_0$, we denote the coefficient of $(x - x_0)^j$ by $[(x - x_0)^j] f(x)$, and
the residue of $f(x)$ at $x=x_0$ by
\[
\res_{x =x_0} f(x) := [(x - x_0)^{-1}] f(x).
\]
In particular, we simply write $\res_{x} f(x) := \res_{x=0} f(x) = [x^{-1}] f(x)$ if there is no ambiguity. We have the relation
\[
\CT_x x f(x) = \res_x f(x).
\]

\subsection{Jacobi's change of variable formula}
We need the following Jacobi's change of variable formula in the one variable case.
\begin{lem}[\cite{1830-Jacobi}]\label{lem-res1830}
Let $c$ be a complex number. Suppose $g(y)$ is holomorphic in a neighborhood of $y=c$ and suppose $f(x)$ is meromorphic in a neighborhood of $x=g(c)$. If $\big( \partial_y g(y) \big) \big|_{y=c} \neq 0$, then
\[
\res_{x=g(c)} f(x) = \res_{y=c} f(g(y)) \partial_y g(y),
\]
where $\partial_y := \frac{\partial}{\partial y}$.
\end{lem}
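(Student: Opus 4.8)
The plan is to exploit the hypothesis $\big(\partial_y g(y)\big)\big|_{y=c}\neq 0$, which by the holomorphic inverse function theorem makes $u := g(y)-g(c)$ a local biholomorphic coordinate near $y=c$: it is holomorphic, vanishes at $y=c$, and has a \emph{simple} zero there because $\big(\partial_y u\big)\big|_{y=c}=g'(c)\neq 0$. Since the residue is by definition the coefficient of the $(-1)$-st power, the content of the lemma is precisely its invariance under this change of coordinate, and I would verify that invariance directly rather than quote it.

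First I would record the relevant Laurent expansion. As $f$ is meromorphic near $x=g(c)$, it has an expansion $f(x)=\sum_{j\geq -N} a_j (x-g(c))^j$ with only finitely many negative terms, and $\res_{x=g(c)}f(x)=a_{-1}$. Substituting $x=g(y)$ and using $g(y)-g(c)=u$ together with $\partial_y g(y)=\partial_y u$ gives
\[
f(g(y))\,\partial_y g(y)=\sum_{j\geq -N} a_j\, u^{\,j}\,\partial_y u,
\]
which I would then analyze termwise, isolating the single resonant index $j=-1$. For every $j\neq -1$ the summand is an exact derivative, $u^{\,j}\,\partial_y u=\frac{1}{j+1}\,\partial_y\big(u^{\,j+1}\big)$, and since $u$ is a Laurent series in $(y-c)$ so is $u^{\,j+1}$; the $y$-derivative of any Laurent series has vanishing coefficient of $(y-c)^{-1}$, so these terms contribute no residue. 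For the term $j=-1$ I would compute
\[
\res_{y=c}\ u^{-1}\,\partial_y u=\res_{y=c}\ \frac{\partial_y g(y)}{g(y)-g(c)}=\operatorname{ord}_{y=c}\big(g(y)-g(c)\big)=1,
\]
the middle equality being the standard logarithmic-derivative count of zeros and the last using that $u$ has a simple zero. Collecting the two contributions yields $\res_{y=c} f(g(y))\,\partial_y g(y)=a_{-1}=\res_{x=g(c)}f(x)$, as claimed.

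The step requiring the most care — the one I would write out fully — is the legitimacy of forming $f\circ g$ as a genuine Laurent series in $(y-c)$ and splitting it termwise. This is exactly where the hypothesis $g'(c)\neq 0$ is used a second time: because $u$ is a local holomorphic coordinate, composing the finite principal part of $f$ with $g$ produces a finite principal part in $(y-c)$, while the holomorphic remainder of $f$ composes to a holomorphic function, which after multiplication by $\partial_y g$ is residue-free. Hence only finitely many terms can contribute to the residue at $y=c$, and the termwise manipulation is justified.

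An alternative and entirely equivalent route avoids series altogether, and I would mention it as a sanity check. Writing the residue as a contour integral, $\res_{x=g(c)}f(x)=\frac{1}{2\pi i}\oint f(x)\,dx$ over a small positively oriented loop about $g(c)$, the substitution $x=g(y)$, $dx=\partial_y g(y)\,dy$ turns this into $\frac{1}{2\pi i}\oint f(g(y))\,\partial_y g(y)\,dy$; since $g$ is an orientation-preserving local biholomorphism it carries a small positively oriented loop about $c$ to a loop of winding number $+1$ about $g(c)$, giving the stated identity at once.
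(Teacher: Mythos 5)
The paper offers no proof of Lemma \ref{lem-res1830} at all: it is quoted as a classical result of Jacobi \cite{1830-Jacobi} and used as a black box (in Lemmas \ref{lem-residue-1} and \ref{lem-residue-2}), so there is no in-paper argument to compare yours against. Judged on its own, your proof is correct and complete, and it is the standard one: set $u=g(y)-g(c)$, split $f$ into its finite principal part plus a holomorphic remainder (the remainder composes with $g$ to a function holomorphic at $y=c$, hence residue-free after multiplication by $\partial_y g$), kill the terms $j\neq -1$ because $u^{j}\partial_y u=\frac{1}{j+1}\partial_y\big(u^{j+1}\big)$ is an exact derivative of a Laurent series and so has no $(y-c)^{-1}$ coefficient, and evaluate the resonant term by $\res_{y=c}u^{-1}\partial_y u=\operatorname{ord}_{y=c}u=1$. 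You also correctly locate both places where the hypothesis $g'(c)\neq 0$ is used: it makes the zero of $u$ simple, so the $j=-1$ term contributes $a_{-1}\cdot 1$ rather than $a_{-1}\cdot m$ (indeed the identity fails, acquiring a factor $m=\operatorname{ord}_{y=c}u$, when $g'(c)=0$ and $g$ is nonconstant), and it guarantees the composition has only a finite-order pole at $y=c$, which legitimizes the termwise treatment. The contour-integral variant you sketch is an equally valid alternative and matches how the result is usually verified in complex analysis texts.
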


\begin{lem}\label{lem-residue-1}
Let $F(x_1, x_2, \dots, x_n)$ be a rational function. Then we have
\[
  \CT_{\x} F(x_1, x_2, \dots, x_n) = \CT_{\y} \frac{1}{\prod_{i=1}^{n} (1-y_i)} F \Big(-\frac{y_1}{1-y_1}, -\frac{y_2}{1-y_2}, \dots, -\frac{y_n}{1-y_n} \Big).
\]
\end{lem}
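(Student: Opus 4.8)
The plan is to reduce the $n$-variable statement to the one-variable case of Jacobi's change of variable formula (Lemma~\ref{lem-res1830}) and then iterate. The substitution $x_i = -y_i/(1-y_i)$ is an involution: solving for $y_i$ gives $y_i = -x_i/(1-x_i)$, and correspondingly $1-y_i = 1/(1-x_i)$, so the weight $\prod_i 1/(1-y_i)$ appearing on the right is exactly $\prod_i (1-x_i)$. Since the constant-term operator factors as $\CT_{\x} = \CT_{x_1}\cdots\CT_{x_n}$ and constant terms in distinct variables commute, it suffices to treat one variable at a time with the remaining variables frozen as parameters in the coefficient field.

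For the one-variable case, I would first rewrite the constant term as a residue via $\CT_x F(x) = \res_x F(x)/x$. Taking $g(y) = -y/(1-y)$, which satisfies $g(0)=0$ and $g'(0) = -1 \neq 0$, Lemma~\ref{lem-res1830} gives
\[
\res_{x=0}\frac{F(x)}{x} = \res_{y=0}\frac{F(g(y))}{g(y)}\,g'(y).
\]
A short computation yields $g'(y)/g(y) = 1/\big(y(1-y)\big)$, so the right-hand side equals $\res_y \frac{1}{y}\cdot\frac{F(g(y))}{1-y} = \CT_y \frac{1}{1-y} F\!\big(-y/(1-y)\big)$, which is precisely the claimed identity for $n=1$.

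The passage to general $n$ is then an induction. Applying the one-variable identity in the variable $x_n$ (with $x_1,\dots,x_{n-1}$ held fixed) converts $\CT_{x_n} F$ into $\CT_{y_n}\frac{1}{1-y_n}F(x_1,\dots,x_{n-1},-y_n/(1-y_n))$; commuting $\CT_{y_n}$ past the remaining operators and invoking the induction hypothesis on the $n-1$ variables $x_1,\dots,x_{n-1}$ produces the full product of weights and substitutions. The main point requiring care is the legitimacy of the change of variable at the level of formal Laurent series: because $F$ is rational and $g(y) = -y - y^2 - \cdots$ vanishes to order one at the origin, the substitution induces a well-defined automorphism of the Laurent expansions compatible with the ordering $1 > x_1 > \cdots > x_n > 0$, which is exactly what makes Lemma~\ref{lem-res1830} applicable termwise; I would verify that this substitution does not disturb the prescribed expansion direction, so that the iterated constant terms on both sides refer to the same coefficient extraction.
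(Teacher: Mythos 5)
Your proposal is correct and follows essentially the same route as the paper: both convert $\CT_{\x}$ to a residue via division by $\prod_i x_i$, apply Jacobi's one-variable formula (Lemma~\ref{lem-res1830}) with the substitution $x_i = -y_i/(1-y_i)$ one variable at a time, and use the same computation $g'(y)/g(y) = 1/\bigl(y(1-y)\bigr)$ to recover the weight $\prod_i 1/(1-y_i)$. The only difference is presentational --- you organize the iteration as an induction on $n$, whereas the paper performs the successive substitutions in a single chain of equalities --- and your extra attention to the expansion direction is a reasonable supplement rather than a divergence.
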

\begin{proof}
The lemma follows by applying Lemma \ref{lem-res1830} with $x_i = - \frac{y_i}{1-y_i}$ successively for all $i$:
\begin{align*}
\CT_{\x} F(x_1, x_2, \dots, x_n) &= \res_{\x} \frac{1}{\prod_{i=1}^n x_i} F(x_1, x_2, \dots, x_n) \\
&= \res_{\y} \frac{\prod_{i=1}^{n} \partial_{y_i} (- \frac{y_i}{1-y_i})}{\prod_{i=1}^n (- \frac{y_i}{1-y_i})} F \Big(-\frac{y_1}{1-y_1}, -\frac{y_2}{1-y_2}, \dots, -\frac{y_n}{1-y_n} \Big) \\
&= \res_{\y}  \frac{1}{\prod_{i=1}^{n} y_i (1-y_i)} F \Big(-\frac{y_1}{1-y_1}, -\frac{y_2}{1-y_2}, \dots, -\frac{y_n}{1-y_n} \Big) \\
&= \CT_{\y} \frac{1}{\prod_{i=1}^{n} (1-y_i)} F \Big(-\frac{y_1}{1-y_1}, -\frac{y_2}{1-y_2}, \dots, -\frac{y_n}{1-y_n} \Big). \qedhere
\end{align*}
\end{proof}

Let $F(\x)$ be a formal Laurent series.
We define the action of a nonsingular integer matrix \( W = (w_{ij})_{n \times n} \) by
$$WF(\x) := W \circ F(\x) = F(y_1, y_2, \dots, y_n), $$
where $y_i = \x^{W e_i} = x_1^{w_{1i}} \cdots x_n^{w_{ni}} $, and $e_i$ is the standard $i$-th unit vector for each $i$.
Notably, \( W\x^{\alpha} = \x^{W\alpha} \).

For the multi-variable case, we need the next version of Jacobi's change of variable formula.
\begin{lem} \label{lem-changevars}
Suppose \( W \) is a nonsingular integer matrix and \( F(\x) \) is a formal Laurent series in \( \x \). Then we have
$$\CT_{\x} F(\x) = \CT_{\x} (W F(\x)),
$$
provided both sides converge.
\end{lem}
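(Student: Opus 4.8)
The plan is to reduce the identity to the elementary fact that the monomial substitution $\x^{\alpha} \mapsto \x^{W\alpha}$ encoded by the action of $W$ is injective on the lattice of exponents, so that it fixes the constant monomial $\x^0 = 1$ as the only exponent vector mapped to $0$. First I would expand $F(\x) = \sum_{\alpha \in \ZZ^n} c_{\alpha} \x^{\alpha}$ as a formal Laurent series with respect to the order $1 > x_1 > \cdots > x_n > 0$, so that $\CT_{\x} F(\x) = c_0$. Using $W\x^{\alpha} = \x^{W\alpha}$ extended linearly, the action produces the formal sum $WF(\x) = \sum_{\alpha \in \ZZ^n} c_{\alpha} \x^{W\alpha}$, since for a single monomial $\prod_i (\x^{W e_i})^{\alpha_i} = \x^{W\alpha}$.

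The next step is to extract the constant term of the right-hand side. As $\CT_{\x}$ picks out the coefficient of $\x^0$, we get $\CT_{\x}(WF(\x)) = \sum_{\alpha \,:\, W\alpha = 0} c_{\alpha}$. Because $W$ is a nonsingular integer matrix, the map $\alpha \mapsto W\alpha$ is injective on $\ZZ^n$, so $W\alpha = 0$ forces $\alpha = 0$. Hence only the $\alpha = 0$ term survives and $\CT_{\x}(WF(\x)) = c_0 = \CT_{\x} F(\x)$, which is the asserted equality.

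The delicate point, and the reason for the hypothesis that both sides converge, is that the term-by-term action need not land in the field of iterated Laurent series in which the constant term is defined: a monomial $\x^{W\alpha}$ may carry exponents of the wrong sign relative to the expansion order, and the rearranged sum $\sum_{\alpha} c_{\alpha} \x^{W\alpha}$ can fail to be summable there. The main obstacle is therefore not the combinatorics of the exponents but the bookkeeping of re-expansion: one must verify that, when both $F(\x)$ and $WF(\x)$ are genuine elements of this field, the formal interchange of $\sum_{\alpha}$ with the operation of re-expanding $WF(\x)$ in the prescribed order is legitimate, so that the coefficient of $\x^0$ is indeed computed term by term. Granting this, the computation above applies verbatim.

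As a cleaner-to-state alternative one could factor $W$ through its Smith normal form $W = U\,\mathrm{diag}(d_1,\dots,d_n)\,V$ with $U, V \in \mathrm{GL}_n(\ZZ)$ and each $d_i \neq 0$, and then check invariance separately for permutations and unimodular transvections (which permute monomials bijectively, keeping $\x^0$ fixed) and for the diagonal scalings $x_i \mapsto x_i^{d_i}$ (for which $W\alpha = 0$ again forces $\alpha = 0$ since $d_i \neq 0$); composing these recovers the general case. I expect the direct argument to be the shortest, with the convergence justification being the step that requires the most care.
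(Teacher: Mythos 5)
Your proof is correct and is essentially the paper's own argument: both reduce by linearity to monomials and use the fact that nonsingularity of $W$ forces $W\alpha = \mathbf{0}$ only for $\alpha = \mathbf{0}$, with the convergence hypothesis absorbing the rearrangement issue (which the paper likewise does not resolve in-line, deferring instead to the general treatment in Xin's thesis). The Smith normal form alternative is a nice extra but not needed.
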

\begin{proof}
By linearity, we can assume $ F(\x) = \x^\alpha $. Then $W F(\x) = \x^{W\alpha}$. Since $W \alpha = \mathbf{0}$ if and only if $\alpha = \mathbf{0}$, the lemma follows.
\end{proof}
See \cite{2004-Xin-phd} for a general version of the above lemma, where the convergent problem is addressed there.
In our applications, the convergence will be easy to check.

\subsection{A constant term concept}
For positive integers $q_1, q_2, \dots, q_n$, let
\begin{equation}\label{e-fx}
  f(x)=\frac{L(x)}{\prod_{i=1}^n (1 - x/u_i)^{q_i}},
\end{equation}
where $L(x)$ is a Laurent polynomial in $x$, and the $u_i$'s are free of $x$.
Assume the PFD (short for partial fraction decomposition) of $f(x)$ with respect to $x$ is given by
\begin{equation}\label{e-fx-PFD}
f(x)=L_1(x) + \sum_{i=1}^n \frac{A_i(x)}{(1 - x/u_i)^{q_i}},
\end{equation}
where $L_1(x)$ is a Laurent polynomial in $x$, and $A_i(x)$ is a polynomial in $x$ of degree less than $q_i$ for each $i$.
Define
\begin{equation}\label{e-CTxui}
\CT_{x = u_i} f(x) := A_i(0).
\end{equation}
Then we have the following lemmas.
\begin{lem}\label{lem-contri}
Let $f(x)$ be as in \eqref{e-fx} with PFD as in \eqref{e-fx-PFD}. Then
\begin{equation}\label{e-CTfx}
\CT_x f(x) = \CT_x L_1(x) + \sum_{i=1}^{n} \chi(x/u_i < 1) \CT_{x = u_i} f(x).
\end{equation}
In particular, if $f(x)$ is proper in $x$, i.e., the degree in the numerator less than the degree in the denominator, then $L_1(x)$ only contains negative powers in $x$. Consequently,
\begin{equation}\label{e-CTfx-proper}
\CT_x f(x) = \sum_{i=1}^{n} \chi(x/u_i < 1) \CT_{x = u_i} f(x).
\end{equation}
\end{lem}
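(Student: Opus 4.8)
The plan is to apply the constant-term operator $\CT_x$ to the partial fraction decomposition \eqref{e-fx-PFD} term by term, using its linearity, and to read off the contribution of each summand from the expansion convention $1 > x_1 > \cdots > x_n > 0$. The term $L_1(x)$ is already a Laurent polynomial, so its contribution is simply $\CT_x L_1(x) = [x^0]L_1(x)$, with nothing to expand. The entire content of the lemma therefore lies in evaluating $\CT_x \frac{A_i(x)}{(1-x/u_i)^{q_i}}$ for each pole $x = u_i$, and I would show that this equals $\chi(x/u_i < 1)\,A_i(0)$, which by the definition \eqref{e-CTxui} is exactly $\chi(x/u_i<1)\,\CT_{x=u_i} f(x)$.

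I would then split into the two regimes dictated by the ordering convention. When $x/u_i < 1$, the factor is expanded in nonnegative powers as $(1-x/u_i)^{-q_i} = \sum_{k\ge 0}\binom{q_i+k-1}{k}(x/u_i)^k$. Since $A_i(x)$ is an ordinary polynomial, the product carries only nonnegative powers of $x$, and its $x^0$-coefficient picks out precisely $A_i(0)$. When $x/u_i > 1$, the correct expansion is $(1-x/u_i)^{-q_i} = (-u_i/x)^{q_i}\sum_{k\ge 0}\binom{q_i+k-1}{k}(u_i/x)^k$, a series supported on powers of $x$ that are at most $-q_i$; multiplying by $A_i(x)$, whose degree is strictly below $q_i$ by the normalization of the PFD, yields only powers of $x$ that are at most $-1$, so there is no $x^0$ term and the contribution vanishes. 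Summing over $i$ and adding back $\CT_x L_1(x)$ gives \eqref{e-CTfx}.

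For the ``in particular'' assertion, I would argue that properness of $f$ forces the polynomial part at infinity to be zero, so the Laurent polynomial $L_1(x)$ reduces to the principal part of $f$ at $x=0$ and hence contains only strictly negative powers of $x$. Consequently $\CT_x L_1(x)=0$ and \eqref{e-CTfx} collapses to \eqref{e-CTfx-proper}. I expect the only point requiring care to be the bookkeeping of the two expansion regimes — in particular, recognizing that it is exactly the degree bound $\deg A_i < q_i$ that annihilates the $x^0$-coefficient in the $x/u_i > 1$ case; the remaining steps are routine termwise manipulations of geometric-type series.
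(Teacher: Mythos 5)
Your proposal is correct and follows essentially the same route as the paper's own proof: expand each term $A_i(x)/(1-x/u_i)^{q_i}$ according to the regime $x/u_i<1$ or $x/u_i>1$, use the degree bound $\deg A_i < q_i$ to kill the constant term in the latter case, and note that properness makes $L_1(x)$ purely of negative powers so that $\CT_x L_1(x)=0$. The only difference is cosmetic—you spell out the $x/u_i<1$ expansion and the role of the degree bound, which the paper leaves implicit.
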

\begin{proof}
Equation \eqref{e-CTfx} follows by the following observation:
\[
\CT_x \frac{A_i(x)}{(1 - x/u_i)^{q_i}} = \begin{cases}
                                             A_i(0), & \mbox{ if } x/u_i <1; \\
                                             0, & \mbox{ if } x/u_i >1.
                                           \end{cases}
\]
This is because when $x/u_i > 1$, the series expansion
\[
\frac{A_i(x)}{(1 - x/u_i)^{q_i}} = \frac{A_i(x)}{(-x/u_i)^{q_i} (1 - u_i/x)^{q_i}} = (- u_i/x)^{q_i} A_i(x) \sum_{l \ge 0} \binom{l+q_i-1}{l} (u_i/x)^l
\]
only contains negative powers of $x$.

Equation \eqref{e-CTfx-proper} holds by $\CT_x L_1(x) = 0$ when $f(x)$ is proper in $x$.
\end{proof}

\begin{lem}\label{lem-CT2res}
Let $f(x)$ be as in \eqref{e-fx} with PFD as in \eqref{e-fx-PFD}. Then
\[
\CT_{x = u_i} f(x) = - \res_{x = u_i} x^{-1} f(x).
\]
\end{lem}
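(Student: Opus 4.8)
The plan is to reduce both sides to a statement about the single partial-fraction term attached to $u_i$, and then invoke the vanishing of the total residue of a rational function that decays at infinity. Throughout I use that the $u_j$ are nonzero (else $1-x/u_j$ makes no sense) and distinct, so that the decomposition \eqref{e-fx-PFD} has the stated shape.

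First I would localize the residue on the right-hand side. Since $A_j(x)$ has degree less than $q_j$, each term $A_j(x)/(1-x/u_j)^{q_j}$ with $j\neq i$, together with the Laurent polynomial $L_1(x)$, is holomorphic at $x=u_i$; multiplying by $x^{-1}$, which is regular at $u_i$ because $u_i\neq 0$, preserves regularity there. Hence only the $i$-th term contributes to the residue at $u_i$:
\[
\res_{x=u_i} x^{-1} f(x) = \res_{x=u_i} \frac{A_i(x)}{x(1-x/u_i)^{q_i}}=:\res_{x=u_i} g(x).
\]
The rational function $g(x)$ has exactly two finite poles: a simple pole at $x=0$ and a pole of order $q_i$ at $x=u_i$. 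At the origin, setting $x=0$ in $A_i(x)/(1-x/u_i)^{q_i}$ gives $\res_{x=0} g(x)=A_i(0)=\CT_{x=u_i}f(x)$.

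The key point is the behavior at infinity: because $\deg A_i<q_i$, the numerator of $g$ has degree at most $q_i-1$ while its denominator has degree $q_i+1$, so $g(x)=O(x^{-2})$ as $x\to\infty$. Therefore the expansion of $g$ at infinity carries no $x^{-1}$ term, and the total finite residue vanishes:
\[
\res_{x=0} g(x) + \res_{x=u_i} g(x) = 0.
\]
Combining the three observations yields $\CT_{x=u_i}f(x)+\res_{x=u_i}x^{-1}f(x)=0$, which is the claim.

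The only delicate point is justifying the vanishing total residue inside the formal Laurent framework of this section, rather than appealing to contour integration. I would do this algebraically: writing $g$ as the sum of its principal parts $\sum_{p\in\{0,u_i\}}\sum_{k\ge 1} c_{p,k}(x-p)^{-k}$ and expanding each $(x-p)^{-k}=x^{-k}(1-p/x)^{-k}$ at infinity, one checks that the only contributions to the coefficient of $x^{-1}$ come from the $k=1$ summands, so that coefficient equals $c_{0,1}+c_{u_i,1}=\res_{x=0}g+\res_{x=u_i}g$; the estimate $g=O(x^{-2})$ forces it to be $0$. This keeps the proof purely formal and self-contained, matching the style of the surrounding lemmas.
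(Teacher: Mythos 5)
Your proof is correct, but it reaches the key identity by a genuinely different route than the paper. Both arguments begin identically: you localize $\res_{x=u_i} x^{-1}f(x)$ to the single term $g(x) = A_i(x)/\bigl(x(1-x/u_i)^{q_i}\bigr)$, since $x^{-1}L_1(x)$ and the terms with $j \neq i$ in \eqref{e-fx-PFD} are regular at $u_i \neq 0$; the paper does the same localization by linearity. The divergence is in how the remaining residue is evaluated. The paper performs a purely local, explicit computation: it expands $x^r$ in powers of $(x-u_i)$ via the binomial theorem and reads off $\res_{x=u_i} x^{l-1}/(1-x/u_i)^{q_i}$, which equals $-1$ for $l=0$ and $0$ for $1 \le l \le q_i - 1$; summing against the coefficients of $A_i$ gives $-a_{i,0}$. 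You instead compute the residue globally: since $\deg A_i < q_i$, the function $g$ is $O(x^{-2})$ at infinity, so the sum of its finite residues vanishes, and hence $\res_{x=u_i} g = -\res_{x=0} g = -A_i(0)$, the residue at the origin being trivial to evaluate. Your approach buys a cleaner conceptual explanation of where the minus sign and the factor $x^{-1}$ come from (the $x^{-1}$ manufactures a compensating pole at the origin whose residue is exactly $A_i(0)$), and it is closer in spirit to the total-residue technique of Baldoni-Silva and Vergne cited in the introduction; your principal-parts justification of the vanishing total residue also keeps the argument formal, matching the section's style. The paper's computation buys elementary self-containedness: no appeal to behavior at infinity or to a global residue identity, at the cost of a longer coefficient extraction. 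Two small points to tighten in your write-up: the identity ``proper rational function equals the sum of its principal parts'' that you invoke is exactly the decomposition \eqref{e-fx-PFD} specialized to $g$ (whose Laurent-polynomial part vanishes by properness), so you may cite it rather than treat it as an extra ingredient; and ``exactly two finite poles'' is a harmless imprecision, since the pole at $0$ disappears when $A_i(0)=0$ and the pole order at $u_i$ may drop, but your argument only needs that poles can occur nowhere else.
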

\begin{proof}
Suppose $A_i(x) = \sum_{l=0}^{q_i-1} a_{i,l} x^l$, where the $a_{i,l}$'s are not all zero.
By the definition of $\CT\limits_{x = u_i} f(x)$ (See \eqref{e-CTxui}), it is sufficient to show that
\[
\res_{x = u_i} x^{-1} f(x) = [(x-u_i)^{-1}] x^{-1} f(x) = - a_{i,0}.
\]
For any integer $r$, we can expand $x^r$ at $x=u_i$ as follows:
\[
x^r = u_i^r (1 - (1 - x/u_i))^r = \begin{cases}
                                    \sum_{k=0}^{r} \binom{r}{k} u_i^{r-k} (x - u_i)^k, & \mbox{if } r \ge 0; \\
                                    \sum_{k \ge 0} \binom{k-r-1}{k} (-1)^k u_i^{r-k} (x - u_i)^k, & \mbox{if } r < 0.
                                  \end{cases}
\]
It is clear that
\[
\res_{x = u_i} x^{r} = 0,
\]
and
\[
\res_{x = u_i} \frac{x^r}{(1 - x/u_i)^{q_i}} = (- u_i)^{q_i} [(x-u_i)^{q_i - 1}]\; x^r
= \begin{cases}
 -1, & \mbox{if } r = -1; \\
 0, & \mbox{if } 0 \le r \le q_i - 2.
\end{cases}
\]
By linearity, we have
\begin{align*}
\res_{x = u_i} x^{-1} f(x) &= \res_{x = u_i} \Big( x^{-1} L_1(x) + \sum_{j=1, j \neq i}^n \frac{x^{-1} A_j(x)}{(1 - x/u_j)^{q_j}} \Big) + \res_{x = u_i} \frac{x^{-1} A_i(x)}{(1 - x/u_i)^{q_i}} \\
&= 0 + \sum_{l=0}^{q_i-1} a_{i,l} \res_{x = u_i} \frac{x^{l-1}}{(1 - x/u_i)^{q_i}} = -a_{i,0},
\end{align*}
as desired.
\end{proof}

\begin{lem}\label{lem-residue-2}
Let $F(x_1, x_2, \dots, x_n)$ be a rational function. We have
\[
\CT_{x_n = 1} \cdots \CT_{x_2 = 1} \CT_{x_1 = 1}  F(x_1, x_2, \dots, x_n) = \CT_{\y} \frac{\prod_{i=1}^{n} y_i}{\prod_{i=1}^{n} (1-y_i)} F(1-y_1, 1- y_2, \dots, 1-y_n)
\]
with order $1 > y_n > y_{n-1} > \cdots > y_1 > 0 $ when expanding as a Laurent series.
\end{lem}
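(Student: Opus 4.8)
The statement involves iterating the single-variable operator $\CT_{x=u}$ at $u=1$ across all $n$ variables and recognizing the result as an ordinary constant term after the substitution $x_i = 1 - y_i$. The natural plan is to reduce to the one-variable case, where I can use the machinery just established in Lemmas \ref{lem-CT2res} and \ref{lem-res1830}, and then iterate.

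First I would fix all variables except $x_1$ and treat $F$ as a rational function $f(x_1)$ of the single variable $x_1$ (with the remaining $x_i$ playing the role of the parameters $u_i$ in \eqref{e-fx}). By Lemma \ref{lem-CT2res} I can rewrite $\CT_{x_1 = 1} F = -\res_{x_1 = 1} x_1^{-1} F$. Next I would apply Jacobi's formula in one variable, Lemma \ref{lem-res1830}, with the change of variable $x_1 = 1 - y_1$, so that $c = 0$ corresponds to $g(c) = 1$ and $\partial_{y_1} g = -1 \neq 0$. This converts the residue at $x_1 = 1$ into a residue at $y_1 = 0$, producing a factor $\frac{-1}{1-y_1}$ from $x_1^{-1}\,\partial_{y_1}g$, which I then convert back from a residue at $y_1=0$ to a constant term using $\CT_{y_1} y_1 h(y_1) = \res_{y_1} h(y_1)$. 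The net effect on a single variable is the replacement $x_1 \mapsto 1 - y_1$ together with the prefactor $\frac{y_1}{1-y_1}$, exactly matching one factor in the claimed product. The one subtlety here is sign bookkeeping: the $-1$ from Lemma \ref{lem-CT2res}, the $-1$ from $\partial_{y_1}(1-y_1)$, and the extra $y_1$ from the $\CT$/$\res$ conversion must be tracked so that all signs cancel and only the stated $\frac{y_1}{1-y_1}$ survives.

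The main work is then to \emph{iterate} this single-variable step for $i = 2, \dots, n$. Since the operators $\CT_{x_i = 1}$ act on distinct variables, they commute in the appropriate sense, and I can peel them off one at a time; each application contributes one more factor $\frac{y_i}{1-y_i}$ and performs the substitution $x_i \mapsto 1 - y_i$, so that after all $n$ steps the integrand becomes $\frac{\prod_i y_i}{\prod_i (1-y_i)} F(1-y_1, \dots, 1-y_n)$, as required. The point I expect to need the most care is the ordering convention $1 > y_n > \cdots > y_1 > 0$: the iterated constant terms are taken in the order $x_1$, then $x_2$, and so on, and each $\CT_{x_i=1}$ selects the part of the expansion in which $x_i$ is near $1$, i.e. $y_i$ near $0$ and smaller than the previously introduced $y_j$'s. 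I would argue that this successive expansion—first in $x_1$ about $1$, then in $x_2$, etc.—is precisely the Laurent expansion in the region $1 > y_n > y_{n-1} > \cdots > y_1 > 0$, which justifies replacing the iterated single-variable constant terms by the single multivariate $\CT_{\y}$ on the right-hand side. This reordering/consistency of the expansion region is the genuine obstacle; the algebraic substitution itself is routine once the one-variable step is in hand.
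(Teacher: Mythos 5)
Your proposal is correct and follows essentially the same route as the paper's proof: convert each $\CT_{x_i=1}$ to a residue via Lemma \ref{lem-CT2res}, apply Jacobi's one-variable formula (Lemma \ref{lem-res1830}) with $x_i = 1-y_i$, and convert back to a constant term, with the signs cancelling exactly as you describe. The paper merely performs these three steps for all variables at once rather than peeling off one variable at a time, and likewise concludes with the same observation that the expansion order $1 > y_n > \cdots > y_1 > 0$ arises naturally.
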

\begin{proof}
We successively make the change of variables $x_i = 1-y_i$ for $i=1,2,\dots, n$ to obtain
\begin{align*}
\CT_{x_n = 1} \cdots \CT_{x_2 = 1} \CT_{x_1 = 1} F &= \res_{x_n = 1} \cdots \res_{x_2 = 1} \res_{x_1 = 1} \frac{F(x_1, x_2, \dots, x_n)}{\prod_{i=1}^{n} (- x_i)} & &(\text{by Lemma \ref{lem-CT2res}}) \\
&= \res_{y_n} \cdots \res_{y_2} \res_{y_1} \frac{F(1-y_1, 1- y_2, \dots, 1-y_n)}{\prod_{i=1}^{n} (1-y_i)} & &(\text{by Lemma \ref{lem-res1830}}) \\
&= \CT_{\y} \frac{\prod_{i=1}^{n} y_i}{\prod_{i=1}^{n} (1-y_i)} F(1-y_1, 1- y_2, \dots, 1-y_n).
\end{align*}
The order is naturally $1 > y_n > y_{n-1} > \cdots > y_1 > 0 $.
\end{proof}

The authors and their collaborators \cite{2022-CTTypeA} provided an explicit formula of $A_i(0)$. We restate it here. See \cite[Lemma 2.1]{2022-CTTypeA} for the proof. For convenience, we denote
\begin{equation}\label{e-defin-g}
g_i (x) = f(x)(1 - x/u_i)^{q_i} = \frac{L(x)}{\prod_{l=1,l\neq i}^n (1 - x/u_l)^{q_l}}
\end{equation}
for each $i \in \{1,2,\dots,n\}$.

\begin{lem}[\cite{2022-CTTypeA}]\label{lem-Ai0}
Let $f(x)$, $A_i(x)$ and $g_i(x)$ be as in \eqref{e-fx}, \eqref{e-fx-PFD} and \eqref{e-defin-g}, respectively.
Then
\[
\CT_{x = u_i}f(x) = A_i(0)= \frac{(-1)^{q_i-1}}{(q_i-1)!} \Big(\partial_w^{q_i-1} \frac{g_i(w u_i)}{w} \Big) \Big|_{w=1}.
\]
In particular, if $q_i=1$, then
\begin{equation}\label{e-u-A10-1}
A_i(0) = g_i(u_i).
\end{equation}
\end{lem}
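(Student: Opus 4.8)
The plan is to reduce the statement to a single higher-order residue computation and then read off the derivative formula. First I would invoke Lemma~\ref{lem-CT2res}, which has already been established, to write
\[
\CT_{x=u_i} f(x) = A_i(0) = -\res_{x=u_i} x^{-1} f(x).
\]
Using the definition $g_i(x) = f(x)(1-x/u_i)^{q_i}$ from \eqref{e-defin-g}, the integrand becomes $x^{-1}f(x) = g_i(x)\big/\big(x(1-x/u_i)^{q_i}\big)$, and since $g_i(x)/x$ is holomorphic near $x=u_i$ (the remaining poles sit at the other $u_l$, and $u_i \neq 0$), the only singular factor is $(1-x/u_i)^{q_i}$.

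Next I would clear this factor by writing $(1-x/u_i)^{q_i} = (-1)^{q_i} u_i^{-q_i}(x-u_i)^{q_i}$, so that
\[
A_i(0) = -(-1)^{q_i} u_i^{q_i} \res_{x=u_i}\frac{g_i(x)/x}{(x-u_i)^{q_i}} = \frac{(-1)^{q_i-1} u_i^{q_i}}{(q_i-1)!}\,\partial_x^{q_i-1}\Big(\frac{g_i(x)}{x}\Big)\Big|_{x=u_i},
\]
where the last equality is the standard residue formula at a pole of order $q_i$ applied to the function $g_i(x)/x$. Finally I would substitute $x=w u_i$, under which $\partial_x = u_i^{-1}\partial_w$ and $g_i(x)/x = g_i(wu_i)/(w u_i)$; the accumulated powers of $u_i$ cancel exactly against the prefactor $u_i^{q_i}$, leaving
\[
A_i(0) = \frac{(-1)^{q_i-1}}{(q_i-1)!}\,\partial_w^{q_i-1}\Big(\frac{g_i(wu_i)}{w}\Big)\Big|_{w=1},
\]
which is the asserted identity; setting $q_i=1$ collapses the derivative and gives $A_i(0)=g_i(u_i)$.

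There is no serious conceptual obstacle here, since Lemma~\ref{lem-CT2res} already does the heavy lifting of converting the constant-term operator into a residue. The one point demanding care is the bookkeeping of the factors $(-1)^{q_i}$ and $u_i^{q_i}$: one must track them through both the rewriting of the pole factor and the change of variable $x=wu_i$ to confirm that all powers of $u_i$ cancel and the sign lands on $(-1)^{q_i-1}$. As an independent check (and an alternative derivation that avoids residues altogether), I would note that multiplying the partial fraction decomposition \eqref{e-fx-PFD} by $(1-x/u_i)^{q_i}$ shows $g_i(x)\equiv A_i(x)\pmod{(1-x/u_i)^{q_i}}$, whence $A_i(x)$ is the degree-$(q_i-1)$ Taylor polynomial of $g_i$ at $x=u_i$; evaluating it at $x=0$ reproduces the same closed form and confirms the constant.
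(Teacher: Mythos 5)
Your proof is correct, but note that the paper itself never proves this lemma: it is restated from \cite{2022-CTTypeA}, with the proof deferred to Lemma 2.1 of that reference. So there is no in-paper argument to compare against, and your derivation is a genuinely self-contained addition. Your route is legitimate within the paper's framework: Lemma \ref{lem-CT2res} is established earlier and independently of Lemma \ref{lem-Ai0}, so invoking it to get $A_i(0) = -\res_{x=u_i} x^{-1}f(x)$ involves no circularity; the rewriting $(1-x/u_i)^{q_i} = (-1)^{q_i}u_i^{-q_i}(x-u_i)^{q_i}$ and the standard order-$q_i$ residue formula applied to the function $g_i(x)/x$ (holomorphic at $x=u_i$, granted the implicit assumptions that the $u_l$ are distinct and nonzero) are sound; and the sign and power bookkeeping checks out, since $-(-1)^{q_i} = (-1)^{q_i-1}$ and the substitution $x = wu_i$ produces exactly $u_i^{-(q_i-1)}\cdot u_i^{-1} = u_i^{-q_i}$, cancelling the prefactor $u_i^{q_i}$. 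Your closing alternative argument is worth highlighting: multiplying \eqref{e-fx-PFD} by $(1-x/u_i)^{q_i}$ shows every term other than $A_i(x)$ vanishes to order $q_i$ at $x=u_i$, so $A_i(x)$ is the truncated Taylor expansion of $g_i$ at $u_i$, and evaluating at $x=0$ gives $A_i(0) = \sum_{k=0}^{q_i-1} \frac{(-u_i)^k}{k!} g_i^{(k)}(u_i)$, which a Leibniz expansion confirms equals the stated $w$-derivative expression. That second argument avoids residues entirely and is arguably the cleanest way to see why the formula holds; either one would serve as a complete proof had the paper chosen to include one.
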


\begin{lem}\label{lem-CT2LRe}
For integers $p_1, p_2, \dots, p_n$, positive integers $q_{0,1}, q_{0,2}, \dots, q_{0,n}$, and nonnegative integers $q_{i,j}$ ($1\le i<j \le n$), let
\[
F = \frac{\prod_{i=1}^{n} x_i^{p_i}}{\prod_{i=1}^n (1-x_i)^{q_{0,i}} \prod_{1\le i<j \le n} (x_i-x_j)^{q_{i,j}} }.
\]
be proper in $x_i$ for each $1 \le i \le n$, i.e., $p_i < q_{0,i} + \sum_{j=1}^{i-1} q_{j,i} + \sum_{j=i+1}^{n} q_{i,j}$.
Then
\[
\CT_{\x} F = \CT_{x_n = 1} \cdots \CT_{x_2 = 1} \CT_{x_1 = 1} F.
\]
\end{lem}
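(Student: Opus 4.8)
The plan is to peel off the variables one at a time, in the order $x_1, x_2, \dots, x_n$. Under the expansion convention $1 > x_1 > \cdots > x_n > 0$, the total constant term factors as the iterated constant term
\[
\CT_{\x} F = \CT_{x_n} \cdots \CT_{x_2} \CT_{x_1} F,
\]
with $\CT_{x_1}$ applied first. It therefore suffices to show, at each stage, that taking the ordinary constant term $\CT_{x_k}$ in the current variable coincides with taking the localized constant term $\CT_{x_k = 1}$, and that after doing so the remaining object is again a finite sum of functions of the shape displayed in the lemma, in the variables $x_k, \dots, x_n$, proper in each of them. The whole argument is then an induction on $k$.

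For the single-step identity I would apply Lemma \ref{lem-contri} to $F$, regarded as a rational function of $x_1$ with $x_2, \dots, x_n$ as parameters. Writing $(x_1 - x_j)^{q_{1,j}} = (-x_j)^{q_{1,j}}(1 - x_1/x_j)^{q_{1,j}}$, the poles of $F$ in $x_1$ in the sense of \eqref{e-fx} occur at $u = 1$ and at $u = x_j$ for $j \ge 2$. Since $F$ is proper in $x_1$, Lemma \ref{lem-contri} gives $\CT_{x_1} F = \sum_{u} \chi(x_1/u < 1)\, \CT_{x_1 = u} F$. Under the order $1 > x_1 > x_2 > \cdots > x_n > 0$ we have $x_1/1 = x_1 < 1$, whereas $x_1/x_j > 1$ for every $j \ge 2$; hence only the term $u = 1$ survives and $\CT_{x_1} F = \CT_{x_1 = 1} F$. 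The identical reasoning applies verbatim at each later stage, once I know the current function is proper and has its only ``small'' pole at the current variable equal to $1$.

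The heart of the proof is showing that this structure is preserved. Using Lemma \ref{lem-Ai0} with $u_1 = 1$ and exponent $q_{0,1}$, I would compute
\[
\CT_{x_1 = 1} F = \frac{(-1)^{q_{0,1}-1}}{(q_{0,1}-1)!}\,\partial_w^{q_{0,1}-1}\!\left[\frac{g_1(w)}{w}\right]_{w=1},
\]
where $g_1(w) = (1-x_1)^{q_{0,1}} F \big|_{x_1 = w}$. The only $w$-dependence sits in $w^{p_1 - 1}$ and in the factors $(w - x_j)^{q_{1,j}}$, so applying $\partial_w^{q_{0,1}-1}$ and setting $w = 1$ produces a finite $\RR$-linear combination of functions of the form
\[
\frac{\prod_{i=2}^{n} x_i^{p_i}}{\prod_{i=2}^{n}(1-x_i)^{q_{0,i}+b_i}\prod_{2 \le i<j \le n}(x_i - x_j)^{q_{i,j}}},
\qquad b_i \ge q_{1,i},
\]
that is, functions of exactly the shape in the lemma with one fewer variable, in which the numerator exponents $p_i$ and the cross exponents $q_{i,j}$ are unchanged while each $(1-x_i)$ exponent only increases, the old factor $(x_1 - x_i)^{q_{1,i}}$ being absorbed into the $(1-x_i)$ factor. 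Because $p_i$ is unchanged and the denominator degree in $x_i$ only grows, the properness inequality $p_i < q_{0,i} + \sum_{j=1}^{i-1} q_{j,i} + \sum_{j=i+1}^{n} q_{i,j}$ for $F$ implies the corresponding inequality for every resulting term, so the inductive hypothesis is restored.

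I expect this last structural bookkeeping to be the main obstacle: one must check carefully that differentiating in $w$ never introduces the current variable into the numerator nor lowers the multiplicity of any $(1-x_j)$ factor, so that properness, which is precisely what makes Lemma \ref{lem-contri} collapse onto the single pole $u = 1$, is genuinely inherited at every stage. Granting this, induction on $k$, applying the single-step identity and then the structure-preservation step to each term produced, yields
\[
\CT_{\x} F = \CT_{x_n = 1} \cdots \CT_{x_2 = 1} \CT_{x_1 = 1} F,
\]
as claimed. The final stage, where only $x_n$ remains and no cross factors are left, is the degenerate case of the same argument.
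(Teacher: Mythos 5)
Your proposal is correct and follows essentially the same route as the paper's proof: reduce $\CT_{x_1}$ to $\CT_{x_1=1}$ via Lemma \ref{lem-contri} (properness plus the order $1>x_1>\cdots>x_n$ kills all other poles), then use Lemma \ref{lem-Ai0} with a Leibniz expansion of $\partial_w^{q_{0,1}-1}$ to see that the result is a finite linear combination of functions of the same shape in $x_2,\dots,x_n$ with the $(1-x_i)$ exponents only increased (by $q_{1,i}+l_i$), so properness is inherited and induction applies. The structural bookkeeping you flag as the main obstacle is exactly what the paper's displayed computation of $G_{l_1,\dots,l_n}$ verifies, and your account of it is accurate.
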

\begin{proof}
By Lemmas \ref{lem-contri} and \ref{lem-Ai0}, we have
\begin{align*}
\CT_{x_1} F = &\CT_{x_1 = 1} F \\
= &\frac{(-1)^{q_{0,1}-1}}{(q_{0,1}-1)!} \Big(\partial_w^{q_{0,1}-1} \frac{w^{p_1 - 1}}{\prod_{i=2}^n (w-x_i)^{q_{1,i}}} \Big) \Big|_{w=1} \cdot \frac{\prod_{i=2}^{n} x_i^{p_i}}{\prod_{i=2}^n (1-x_i)^{q_{0,i}} \prod_{2\le i<j \le n} (x_i-x_j)^{q_{i,j}}} \\
= & (-1)^{q_{0,1}-1} \Bigg(\sum_{\substack{\sum_{i=1}^n l_{i} = q_{0,1}-1 \\ l_i \ge 0}} \frac{\partial_w^{l_1}}{l_1!} w^{p_1 - 1} \prod_{i=2}^{n} \frac{\partial_w^{l_i}}{l_i!} (w - x_i)^{-q_{1,i}} \Bigg) \Bigg|_{w=1} \\
& \qquad \cdot \frac{\prod_{i=2}^{n} x_i^{p_i}}{\prod_{i=2}^n (1-x_i)^{q_{0,i}} \prod_{2\le i<j \le n} (x_i-x_j)^{q_{i,j}}} \\
= & (-1)^{q_{0,1}-1} \sum_{\substack{\sum_{i=1}^n l_{i} = q_{0,1}-1 \\ l_i \ge 0}} \binom{p_1 - 1}{l_1} \prod_{i=2}^{n} \binom{-q_{1,i}}{l_i} \cdot G_{l_1, l_2, \dots, l_n},
\end{align*}
where each
\[
G_{l_1, l_2, \dots, l_n} = \frac{\prod_{i=2}^{n} x_i^{p_i}}{\prod_{i=2}^n (1-x_i)^{q_{0,i} + q_{1,i} + l_i} \prod_{2\le i<j \le n} (x_i-x_j)^{q_{i,j}}}
\]
is proper in $x_i$ for any $2 \le i \le n$.
The lemma then follows by repeating the above process.
\end{proof}

Applying Lemma \ref{lem-CT2LRe} to $\bar \H_n$ (as defined in \eqref{e-barH21}) yields the following corollary.

\begin{cor}\label{cor-barH2xi1}
Let $h_n(t)$ and $\bar \H_n$ be as in \eqref{e-CTH21} and \eqref{e-barH21}, respectively. Then
\[
h_n(t) = \CT_{x_n = 1} \cdots \CT_{x_3 = 1} \CT_{x_2 = 1} \bar \H_n.
\]
\end{cor}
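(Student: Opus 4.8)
The plan is to recognize $\bar\H_n$ as an instance of the rational function $F$ appearing in Lemma~\ref{lem-CT2LRe} and then simply invoke that lemma, the only twist being that the relevant variables here are $x_2, x_3, \dots, x_n$ rather than $x_1, \dots, x_n$. Since $h_n(t) = \CT_{\x}\bar\H_n$ holds by the definition \eqref{e-CTH21}, it suffices to prove that $\CT_{\x}\bar\H_n = \CT_{x_n=1}\cdots \CT_{x_2=1}\bar\H_n$.

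First I would put $\bar\H_n$ into the standard shape of Lemma~\ref{lem-CT2LRe}. The numerator factors $(1-x_n)$ and $\prod_{i=2}^{n-1}(x_i - x_n)$ are not permitted in that form, but they cancel against the denominator: $(1-x_n)$ lowers $(1-x_n)^3$ to $(1-x_n)^2$, and each $(x_i-x_n)$ lowers $(x_i-x_n)^2$ to $(x_i-x_n)^1$. After this cancellation $\bar\H_n$ becomes
\[
\bar\H_n = \frac{\prod_{i=2}^{n-1} x_i^{n-1}}{x_n^{t}\,(1-x_n)^2 \prod_{i=2}^{n-1}(1-x_i)^3 \prod_{2\le i<j\le n-1}(x_i-x_j)^2 \prod_{i=2}^{n-1}(x_i-x_n)},
\]
which is exactly the form of $F$ over the index set $\{2, 3, \dots, n\}$, with exponents $p_i = n-1$, $q_{0,i}=3$ for $2\le i \le n-1$, and $p_n=-t$, $q_{0,n}=2$; the pairwise exponents are $q_{i,j}=2$ for $2\le i<j\le n-1$ and $q_{i,n}=1$ for $2\le i\le n-1$.

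Next I would verify the properness hypothesis of Lemma~\ref{lem-CT2LRe} in each variable, which is a short degree count. For $2\le i\le n-1$ the total degree of the denominator in $x_i$ is $3 + 2(i-2) + 2(n-1-i) + 1 = 2n-2$, which exceeds $p_i = n-1$ whenever $n\ge 2$; for $x_n$ the denominator degree is $2 + (n-2) = n$, which exceeds $p_n = -t$ since $t\ge 0$. With properness in hand, applying Lemma~\ref{lem-CT2LRe} (after relabelling $x_2, \dots, x_n$ as $x_1, \dots, x_{n-1}$, which preserves both the form and the expansion order $1 > x_2 > \cdots > x_n > 0$) yields $\CT_{\x}\bar\H_n = \CT_{x_n=1}\cdots \CT_{x_2=1}\bar\H_n$, and combining with \eqref{e-CTH21} gives the corollary. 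The only real obstacle is the bookkeeping: one must correctly track how the numerator factors reduce the pole orders before the hypotheses of Lemma~\ref{lem-CT2LRe} can be matched, since the raw expression for $\bar\H_n$ does not literally sit in the required form.
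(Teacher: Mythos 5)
Your proposal is correct and follows exactly the paper's route: the paper proves this corollary by the single observation that Lemma~\ref{lem-CT2LRe} applies to $\bar\H_n$, and your cancellation of the numerator factors $(1-x_n)$ and $\prod_{i=2}^{n-1}(x_i-x_n)$ against the denominator, followed by the properness degree count ($2n-2 > n-1$ in each $x_i$ and $n > -t$ in $x_n$), is precisely the bookkeeping the paper leaves implicit. Nothing further is needed.
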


We also need the following lemma, which is a consequence of Theorem \ref{theo-Morris}.
\begin{lem}\label{lem-Morris}
Let $M(n; k_1, k_2, k_3)$ be as in \eqref{e-Morris}. If $k_2 = 1$, then
\[
\CT_{\x} M(n + 1; k_1, 1, k_3) = \CT_{\x} M(n; k_1, k_3 + 1, k_3).
\]
\end{lem}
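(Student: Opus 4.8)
The plan is to evaluate both sides with the closed product of Theorem~\ref{theo-Morris} and to verify that the two resulting products of Gamma factors coincide. The left side $\CT_{\x}M(n+1;k_1,1,k_3)$ is an instance of \eqref{e-CTMorris} with $k_2=1$ and $n$ replaced by $n+1$, while the right side $\CT_{\x}M(n;k_1,k_3+1,k_3)$ is an instance with $k_2=k_3+1$; both fall under the hypothesis $k_1+k_2\ge 2$ of Theorem~\ref{theo-Morris} (for the left side this amounts to $k_1\ge 1$, which in turn secures $k_1+k_3\ge 1$ for the right side), so I may substitute the parameters directly into \eqref{e-CTMorris}. Writing $a=\tfrac{k_3}{2}$ for brevity, the claim then reduces to a purely formal identity between two products of $\Gamma$-values.

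First I would carry out the substitutions and simplify the shifted arguments. On the left, $k_2=1$ collapses $k_1+k_2-1$ to $k_1$, and replacing $n$ by $n+1$ turns $(n+j-1)a$ into $(n+j)a$, which after reindexing each $j$-product by a single running index $m$ yields
\[
\CT_{\x}M(n+1;k_1,1,k_3)=\frac{\Gamma(1+a)^{\,n+1}\prod_{m=n}^{2n}\Gamma(k_1+ma)}{\prod_{m=1}^{n+1}\Gamma(1+ma)\cdot\prod_{m=0}^{n}\Gamma(k_1+ma)\cdot\prod_{m=0}^{n}\Gamma(1+ma)}.
\]
On the right, $k_2=k_3+1$ turns the numerator argument into $k_1+(n+j+1)a$ and, crucially, rewrites $\Gamma(k_3+1+ja)=\Gamma(1+(j+2)a)$, so the analogous reindexing gives
\[
\CT_{\x}M(n;k_1,k_3+1,k_3)=\frac{\Gamma(1+a)^{\,n}\prod_{m=n+1}^{2n}\Gamma(k_1+ma)}{\prod_{m=1}^{n}\Gamma(1+ma)\cdot\prod_{m=0}^{n-1}\Gamma(k_1+ma)\cdot\prod_{m=2}^{n+1}\Gamma(1+ma)}.
\]

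It then remains to compare the two expressions family by family. The factors of the form $\Gamma(k_1+ma)$ cancel outright: relative to the right side the left side carries one extra numerator factor $\Gamma(k_1+na)$ and simultaneously one extra denominator factor $\Gamma(k_1+na)$, so their net contribution to the quotient is $1$. For the factors $\Gamma(1+ma)$ I would track the net exponent of each value, treating the prefactor $\Gamma(1+a)^{\bullet}$ as the $m=1$ term; the exponents agree for every $m\ge 1$, and the only residual discrepancy is a lone $\Gamma(1+0\cdot a)=\Gamma(1)=1$ on the left, which is harmless. The two products are therefore equal. I expect the one genuinely delicate point to be the index bookkeeping: the left side runs over $j=0,\dots,n$ while the right runs over $j=0,\dots,n-1$, and the identity hinges on the off-by-one shifts produced by the simplifications $k_2-1=k_1$ and $\Gamma(k_3+1+ja)=\Gamma(1+(j+2)a)$ aligning the summation ranges precisely, together with the trivial value $\Gamma(1)=1$.
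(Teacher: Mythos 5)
Your Gamma-factor computation is correct — I checked both substitutions and the cancellation pattern, and the ratio of the two products is indeed $1$ — but you have taken a genuinely different route from the paper, and the difference affects the range of parameters covered. The paper never invokes Theorem \ref{theo-Morris}. Instead it exploits the fact that, because $k_2=1$, the pole of $M(n+1;k_1,1,k_3)$ at $x_1=1$ is \emph{simple}, so by \eqref{e-CTfx-proper} and \eqref{e-u-A10-1},
\[
\CT_{x_1} M(n+1;k_1,1,k_3)=\big(M(n+1;k_1,1,k_3)\,(1-x_1)\big)\big|_{x_1=1},
\]
and after relabelling $x_i\mapsto x_{i-1}$ for $2\le i\le n+1$ the resulting function is literally $M(n;k_1,k_3+1,k_3)$; taking the constant term in the remaining variables finishes the proof. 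That one-step reduction is self-contained, explains \emph{why} $k_2$ jumps from $1$ to $k_3+1$ (taking the constant term in one variable trades that variable for an extra power of each $(1-x_i)$), and is the same technique the paper reuses to build the recursion in Section \ref{s-recursion}. Your argument instead treats the deep evaluation \eqref{e-CTMorris} as a black box and reduces the lemma to a mechanical identity of Gamma products — perfectly valid as a verification, but it uses much heavier machinery than the statement requires.

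The price of your route is that it inherits the hypothesis $k_1+k_2\ge 2$ of Theorem \ref{theo-Morris}. For the left-hand side, where $k_2=1$, this forces $k_1\ge 1$, and that is \emph{not} among the lemma's hypotheses: in this paper $\NN$ contains $0$ (otherwise the condition $k_1+k_2\ge 2$ in Theorem \ref{theo-Morris} would be redundant), so $k_1=0$ is permitted by the statement. In that case \eqref{e-CTMorris} cannot be applied to $M(n+1;0,1,k_3)$ — its right-hand side contains the factor $\Gamma(k_1+0\cdot\tfrac{k_3}{2})=\Gamma(0)$ — even though the lemma remains true there (the paper's one-step proof does not need $k_1\ge 1$; both sides in fact vanish when $k_1=0$). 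Since every invocation of the lemma in the paper has $k_1\in\{1,3\}$, your proof suffices for everything the paper actually uses; but as a proof of the lemma as stated, you should either add the hypothesis $k_1\ge 1$ explicitly or dispose of the case $k_1=0$ by a separate argument.
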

\begin{proof}
We give a constant term proof. We first compute
\begin{align*}
\CT_{x_1} M(n + 1; k_1, 1, k_3) &= \CT_{x_1 = 1} M(n + 1; k_1, 1, k_3) \hspace{3.8cm} (\text{by \eqref{e-CTfx-proper}}) \\
&= M(n + 1; k_1, 1, k_3) \cdot (1-x_1) \big|_{x_1 = 1} \hspace{2cm} (\text{by \eqref{e-u-A10-1}}) \\
&= \frac{1}{\prod_{i=2}^{n+1} x_i^{k_1 - 1} \prod_{i=2}^{n+1} (1-x_i)^{k_3 + 1} \prod_{2 \le i<j \le n+1} (x_i-x_j)^{k_3}}.
\end{align*}
Relabelling the variables by $x_i = x_{i-1}$ for all $2 \le i \le n+1$ yields
\[
\CT_{x_1} M(n + 1; k_1, 1, k_3) = M(n; k_1, 1 + k_3, k_3).
\]
Then the lemma follows.
\end{proof}

Theorem \ref{theo-Morris} presents the following nontrivial symmetry.
\begin{cor}\label{cor-symMorris}
Let $M(n; k_1, k_2, k_3)$ be as in \eqref{e-Morris}. Then
\[
\CT_{\x} M(n; k_1, k_2, k_3) = \CT_{\x} M(n; k_2, k_1, k_3).
\]
\end{cor}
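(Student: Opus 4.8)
The plan is to read off the symmetry directly from the closed form supplied by Theorem~\ref{theo-Morris}. Both constant terms $\CT_{\x} M(n; k_1, k_2, k_3)$ and $\CT_{\x} M(n; k_2, k_1, k_3)$ satisfy the hypotheses of that theorem---the constraint $k_1 + k_2 \ge 2$ is itself unchanged when $k_1$ and $k_2$ are interchanged---so each is evaluated by the product formula \eqref{e-CTMorris}. It then suffices to verify that the right-hand side of \eqref{e-CTMorris} is invariant under the swap $k_1 \leftrightarrow k_2$.

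First I would inspect the numerator of each factor of the product. The term $\Gamma(1 + \frac{k_3}{2})$ does not involve $k_1$ or $k_2$, and $\Gamma(k_1 + k_2 - 1 + (n + j - 1)\frac{k_3}{2})$ depends on $k_1, k_2$ only through the symmetric combination $k_1 + k_2$; both are therefore fixed by the swap. Next I would turn to the denominator: the factor $\Gamma(1 + (j+1)\frac{k_3}{2})$ is again independent of $k_1, k_2$, while the remaining two factors $\Gamma(k_1 + j\frac{k_3}{2})$ and $\Gamma(k_2 + j\frac{k_3}{2})$ are merely interchanged with one another inside each factor of the product over $j$. Hence every factor of the product is preserved, the product itself is unchanged, and the two constant terms coincide.

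There is essentially no obstacle here: the identity is an immediate consequence of the manifest $k_1 \leftrightarrow k_2$ symmetry of \eqref{e-CTMorris}, together with the observation that the applicability condition $k_1 + k_2 \ge 2$ is symmetric. The only point worth flagging is that this argument leans on the explicit evaluation rather than on a combinatorial involution applied to the constant term itself; a direct proof via a change of variables in $M(n; k_1, k_2, k_3)$ does not present itself as cleanly, since any substitution that exchanges the roles of the $\prod x_i^{k_1-1}$ and $\prod (1-x_i)^{k_2}$ factors would have to be reconciled with the prescribed expansion order $1 > x_1 > \cdots > x_n > 0$, a reconciliation the closed form lets us bypass entirely.
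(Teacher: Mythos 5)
Your proof is correct, but it takes a genuinely different route from the paper's. You deduce the symmetry from the closed-form evaluation: both constant terms satisfy the (symmetric) hypotheses of Theorem \ref{theo-Morris}, and the right-hand side of \eqref{e-CTMorris} is manifestly invariant under $k_1 \leftrightarrow k_2$, since the numerator involves $k_1,k_2$ only through $k_1+k_2$ and the denominator factors $\Gamma(k_1+j\tfrac{k_3}{2})\,\Gamma(k_2+j\tfrac{k_3}{2})$ are merely interchanged. The paper instead gives a direct constant-term proof that never invokes the evaluation: by Lemma \ref{lem-CT2LRe}, $\CT_{\x} M(n;k_1,k_2,k_3)$ equals the iterated constant term $\CT_{x_n=1}\cdots\CT_{x_1=1}M$; Lemma \ref{lem-residue-2} then performs the substitution $x_i = 1-y_i$, which exchanges the roles of the $x_i^{k_1-1}$ and $(1-x_i)^{k_2}$ factors; and a final relabelling $y_i = x_{n+1-i}$ restores the expansion order $1>x_1>\cdots>x_n>0$. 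Notably, the obstacle you flag in your closing paragraph --- reconciling such a substitution with the prescribed expansion order --- is precisely what the machinery of Section \ref{s-tool} is built to handle, and that reconciliation is the substance of the paper's proof. What your approach buys is brevity; what it costs is that the symmetry then rests on the full strength of the Morris identity and holds only under its hypotheses ($k_1,k_2,k_3\in\NN$, $k_1+k_2\ge 2$), whereas the paper's argument explains the symmetry structurally (in the same spirit as the Morales--Shi proofs cited right after the corollary) and exercises the change-of-variables technique that is reused elsewhere in the paper, e.g.\ in the proof of Lemma \ref{lem-part2}.
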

\begin{proof}
Clearly $M(n; k_1, k_2, k_3)$ is proper in $x_i$ for each $i$. Then
\begin{align*}
\CT_{\x} M(n; k_1, k_2, k_3) &= \CT_{x_n = 1} \cdots \CT_{x_2 = 1} \CT_{x_1 = 1} M(n; k_1, k_2, k_3) & & (\text{by Lemma \ref{lem-CT2LRe}}) \\
&= \CT_{\y} \frac{1}{\prod_{i=1}^n (1 - y_i)^{k_1} \prod_{i=1}^n y_i^{k_2 - 1} \prod_{1 \le i<j \le n} (y_j-y_i)^{k_3}} & & (\text{by Lemma \ref{lem-residue-2}}) \\
&= \CT_{\x} M(n; k_2, k_1, k_3).
\end{align*}
where the last equation holds by making change of variables $y_i = x_{n+1-i}$ for all $i$.
\end{proof}
The symmetry in Corollary \ref{cor-symMorris} was considered by Morales and Shi \cite{2021-symMorris}. They gave two additional proofs using interpretations of  
$\CT_{\x} M(n; k_1, k_2, k_3)$ by the volume of a flow polytope, and by the number of certain integer flows.

\section{The proof of Theorem \ref{theo-main}}\label{s-proof}
Our proof of Theorem \ref{theo-main} relies on two alternative constant term expressions of $h_n(t)$.

The first one is obtained by making the change of variables $x_i=y_1 y_2 \cdots y_i$ for all $i$ in Lemma \ref{lem-changevars}.
We can write \eqref{e-CTH21} as follows:
\begin{equation}\label{e-hn-resi-mul}
\begin{aligned}
 h_n(t) &=  \CT_{\y} \bar \H_n|_{x_i = y_1 y_2 \cdots y_i}  \\
 &= \CT_{\y} \frac{ y_1^{-t+n-2} y_n^{-t}  \prod_{i=2}^{n-1} y_i^{-t + (i-1) (n-i)} \prod_{i=2}^{n-1} (1 - y_{i+1} \cdots y_n)}{(1-y_1 y_2 \cdots y_n)^2 \prod_{i=2}^{n-1} (1-y_1 y_2 \cdots y_i)^{3} \prod_{2 \le i<j \le n} (1 - y_{i+1} \cdots y_j)^{2}}.
 \end{aligned}
\end{equation}
Here we shall explain the series expansion as follows. Since the change of variables transforms
$$\frac{1}{1-x_j/x_i}=\sum_{k\ge 0} (x_j/x_i)^k   \mapsto  \sum_{k\ge 0} (y_{i+1} \cdots y_j)^k = \frac{1}{1-y_{i+1} \cdots y_j}$$
we can simply treat $0 < y_i<1$ in the new constant term expression.
Later we will also use the change of variables $y_1 =x_1$ and $y_i=x_i/x_{i-1}$ for all $i \ge 2$ to transform back. Then the series expansion will be clear.

Equation \eqref{e-hn-resi-mul} has three consequences, as stated in the next three lemmas.
\begin{lem}\label{lem-part3-1}
Let $N = \big\lfloor \frac{(n-1)^2}{4} \big\rfloor$. Then $h_n(t) = 0$ for any $0 \le t \le N-1$.
\end{lem}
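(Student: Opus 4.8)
The plan is to read off the vanishing directly from the monomial expansion in \eqref{e-hn-resi-mul}, without evaluating any constant term. First I would write the summand there as $\y^{\boldsymbol\alpha}\, R(\y)$, where the monomial prefactor carries the exponent vector
\[
\alpha_1 = -t+n-2,\qquad \alpha_i = -t+(i-1)(n-i)\ \ (2\le i\le n-1),\qquad \alpha_n=-t,
\]
and $R(\y)$ collects all the remaining factors: the polynomial numerators $\prod_{i=2}^{n-1}(1-y_{i+1}\cdots y_n)$ together with the rational factors $(1-y_1\cdots y_n)^{-2}$, $\prod_{i=2}^{n-1}(1-y_1\cdots y_i)^{-3}$ and $\prod_{2\le i<j\le n}(1-y_{i+1}\cdots y_j)^{-2}$.

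Next I would observe that, under the order $0<y_i<1$ used for the expansion, every factor of $R(\y)$ is either a polynomial or of the form $(1-(\text{a monomial in the }y_i))^{-q}$ with $q>0$; hence each expands into nonnegative powers of the $y_i$, and $R(\y)=\sum_{\boldsymbol\beta\ge \boldsymbol 0} r_{\boldsymbol\beta}\,\y^{\boldsymbol\beta}$ involves only nonnegative exponent vectors $\boldsymbol\beta$. Consequently
\[
h_n(t)=\CT_{\y}\ \y^{\boldsymbol\alpha}R(\y)=[\y^{-\boldsymbol\alpha}]\,R(\y),
\]
which must vanish unless $-\boldsymbol\alpha\ge\boldsymbol 0$, that is, unless $\alpha_i\le 0$ for every $i$.

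It then remains to locate the largest of the constants appearing in the $\alpha_i$. Writing $j=i-1$, the middle exponents give $(i-1)(n-i)=j(n-1-j)$ for $1\le j\le n-2$, whose integer maximum is $\lfloor (n-1)/2\rfloor\lceil (n-1)/2\rceil=\lfloor (n-1)^2/4\rfloor=N$, attained at $j=\lfloor (n-1)/2\rfloor$, which lies in the admissible range $\{1,\dots,n-2\}$ for $n\ge 3$. The remaining constants $n-2$ (from $\alpha_1$) and $0$ (from $\alpha_n$) are both at most $N$, the first because $(n-1)^2-4(n-2)=(n-3)^2\ge 0$. Hence for $0\le t\le N-1$ the index $i^\ast$ realizing the maximum satisfies $\alpha_{i^\ast}=-t+N>0$, so $-\boldsymbol\alpha$ has a strictly negative entry and $[\y^{-\boldsymbol\alpha}]R(\y)=0$, giving $h_n(t)=0$.

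I would expect the only genuinely delicate points to be bookkeeping rather than conceptual: confirming that the split into $\y^{\boldsymbol\alpha}R(\y)$ is legitimate under the prescribed expansion order, so that $R(\y)$ really does carry only nonnegative powers of each $y_i$, and verifying the elementary inequality $n-2\le N$ so that the maximal constant is precisely $N$ and not the contribution of $\alpha_1$. Everything else is an immediate consequence of the nonnegativity of the exponents in $R(\y)$.
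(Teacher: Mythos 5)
Your proposal is correct and follows essentially the same route as the paper: both work from the expansion \eqref{e-hn-resi-mul}, note that all non-monomial factors expand into nonnegative powers of the $y_i$ under the prescribed order, and conclude vanishing because the exponent $-t+(i-1)(n-i)$ at the middle index is $-t+N>0$ when $0\le t\le N-1$. The only difference is presentational: the paper isolates the single variable $y_r$ with $r=\lfloor (n+1)/2\rfloor$ and kills the constant term in that one variable, while you phrase it as a full coefficient extraction $[\y^{-\boldsymbol\alpha}]R(\y)$ and also spell out the bookkeeping (the maximization of $j(n-1-j)$ and the inequality $n-2\le N$ via $(n-3)^2\ge 0$) that the paper dismisses as clear.
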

\begin{proof}
Consider \eqref{e-hn-resi-mul}. Let
\[
p_i = \begin{cases}
        -t + n - 2, & \mbox{if } i=1; \\
        -t + (i-1) (n-i), & \mbox{if } 2 \le i \le n-1; \\
        -t, & \mbox{if } i=n.
      \end{cases}
\]
Clearly $p_i \le p_r = -t + (r-1)(n-r) = -t + N$ for any $1 \le i \le n$, where $r = \big\lfloor \frac{n+1}{2} \big \rfloor$.
If $0\leq t \le N-1$, then $p_r > 0$, i.e., $\bar \H_n|_{x_i = y_1 y_2 \cdots y_i}$ only contains positive powers in $y_r$, so that $\CT_{y_r} \bar \H_n|_{x_i = y_1 y_2 \cdots y_i} = 0$.
Hence $h_n(t) = 0$.
\end{proof}

\begin{lem}\label{lem-part4-1}
Let $r$ be a positive integer. If $n=2r$, then
\[
h_n(r(r-1)) = \big(\CT_{\x} M(r - 1; 3, 1, 2) \big)^2.
\]
\end{lem}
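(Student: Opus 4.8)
The plan is to start from the multivariate form \eqref{e-hn-resi-mul} and exploit the fact that at the balanced value $t=N=r(r-1)$ with $n=2r$ the peak exponents vanish. Using the notation $p_i$ of Lemma \ref{lem-part3-1}, we have $p_i=-N+(i-1)(n-i)$ for $2\le i\le n-1$; since $(i-1)(n-i)$ is symmetric about $i=r+\tfrac12$ and attains its maximum value $N=r(r-1)$ at both $i=r$ and $i=r+1$, we get $p_r=p_{r+1}=0$. This flat peak in the middle is the structural feature that will force the constant term to factor into two equal pieces.

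First I would extract the constant term in the middle variable $y_r$. Because $p_r=0$ and, under the expansion $0<y_i<1$, every factor of the integrand in \eqref{e-hn-resi-mul} is a power series with nonnegative exponents in $y_r$, the coefficient of $y_r^0$ is obtained simply by setting $y_r=0$ in each factor. This trivializes exactly the factors that involve $y_r$: the full product $(1-y_1\cdots y_n)^2$, the prefix factors $(1-y_1\cdots y_i)^3$ with $i\ge r$, the crossing factors $(1-y_{i+1}\cdots y_j)^2$ with $i\le r-1<r\le j$, and the numerator factors $(1-y_{i+1}\cdots y_n)$ with $i\le r-1$. The surviving factors split into a block $L$ involving only $y_1,\dots,y_{r-1}$ and a block $R$ involving only $y_{r+1},\dots,y_n$, so that
\[
h_n(N)=\Big(\CT_{y_1,\dots,y_{r-1}}L\Big)\Big(\CT_{y_{r+1},\dots,y_n}R\Big).
\]

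Next I would identify each block with a Morris constant term. For the right block, the change of variables $z_a=y_{r+a}$ followed by the monomial substitution $w_a=z_az_{a+1}\cdots z_r$ (a single application of Lemma \ref{lem-changevars}) and a reversal of the variable order turns $R$ into the integrand of $M(r;1,1,2)$: the numerator factors cancel one power from the crossing denominators, and the exponents $p_{r+a}=-a(a-1)$ are exactly what is needed to match. Then Lemma \ref{lem-Morris} gives $\CT_{\x}M(r;1,1,2)=\CT_{\x}M(r-1;1,3,2)$ and Corollary \ref{cor-symMorris} gives $\CT_{\x}M(r-1;1,3,2)=\CT_{\x}M(r-1;3,1,2)$. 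For the left block, the prefix substitution $\xi_k=y_1\cdots y_k$ makes $\xi_1$ drop out entirely and rewrites $L$ as the integrand of $M(r-2;3,3,2)$ in the remaining $r-2$ variables; Lemma \ref{lem-Morris} then gives $\CT_{\x}M(r-2;3,3,2)=\CT_{\x}M(r-1;3,1,2)$. Combining, both factors equal $\CT_{\x}M(r-1;3,1,2)$, which yields the claimed square.

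The main obstacle is the bookkeeping in the two changes of variables: one must track carefully how the monomial exponents redistribute and how the numerator factors merge with the crossing denominators, so that the outcome is precisely a Morris integrand with the correct value of $k_1$ rather than merely a Morris-like constant term. The decoupling step itself is clean once one observes the nonnegativity of the $y_r$-expansions. As a sanity check I would verify the case $r=3$ directly, where $L$ and $R$ each evaluate to $\CT_{\x}M(2;3,1,2)=6$ and hence $h_6(6)=36$.
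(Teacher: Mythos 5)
Your proof is correct and follows essentially the same route as the paper's: both start from \eqref{e-hn-resi-mul}, decouple the constant term at the flat peak (the paper sets $y_r = y_{r+1} = 0$; you set only $y_r = 0$, which also suffices), and identify the two resulting blocks with $\CT_{\x} M(r-1;3,1,2)$ via monomial changes of variables together with Lemma \ref{lem-Morris} and Corollary \ref{cor-symMorris}. The only difference is bookkeeping in the right block, where the paper applies the inversion $x_i \mapsto x_i^{-1}$ (plus relabeling) to reach $M(r-1;3,1,2)$ directly, while you pass through $M(r;1,1,2)$ and then invoke Lemma \ref{lem-Morris} and Corollary \ref{cor-symMorris}.
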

\begin{proof}
By \eqref{e-hn-resi-mul}, we have
\[
h_n(r(r-1)) = \CT_{\y} \frac{ y_1^{-(r-1)(r-2)} y_n^{-r(r-1)} \prod_{i=2}^{n-1} y_i^{-(r-i)(r-i+1)} \prod_{i=2}^{n-1} (1 - y_{i + 1}  y_{i+2} \cdots y_n)}{(1-y_1 y_2 \cdots y_n)^{2} \prod_{i=2}^{n-1} (1-y_1 y_2 \cdots y_i)^{3} \prod_{2 \le i<j \le n} (1-y_{i+1}  y_{i+2} \cdots y_j)^{2}}.
\]
Since $-(r-i)(r-i+1)=0$ when $i=r$ or $i=r+1$, $h_n(r(r-1))$ is a power series in both $y_{r}$ and $y_{r+1}$. We can eliminate $y_r$ and $y_{r+1}$ by substituting $y_r = y_{r+1} = 0$ directly. Then using the change of variables $y_1 =x_1$ and $y_i=x_i/x_{i-1}$ for all $i \ge 2$, $h_n(r(r-1))|_{y_r = y_{r+1} = 0}$ is naturally divided into two parts. More precisely, $h_n(r(r-1))= F_1 \cdot F_2$, where
\begin{align*}
F_1 &= \CT_{\x} \frac{1}{\prod_{i=2}^{r-1} x_i^2 \prod_{i=2}^{r-1} (1 - x_i)^3 \prod_{2 \le i<j \le r-1} (x_i - x_j)^2}, \\
F_2 &= \CT_{\x} \frac{x_n^{-r(r-1)} \prod_{i=r+1}^{n-1} x_i^{n - 1} \prod_{i=r+1}^{n-1} (x_i - x_n)}{\prod_{r + 1 \le i < j \le n} (x_i - x_j)^2}.
\end{align*}
Obviously 
\[
F_1 = \CT_{\x} M(r-2; 3, 3, 2) = \CT_{\x} M(r-1, 3, 1, 2),
\] 
where the last equation holds by Lemma \ref{lem-Morris}.
For $F_2$, by making the change of variables $x_i= y_i^{-1}$ for all $i$ in Lemma \ref{lem-changevars} and then making $y_i = x_{n-i}$, we obtain
\[
F_2 = \CT_{\y} \frac{1}{\prod_{i=r+1}^{n-1} y_i^2 \prod_{i=r+1}^{n-1} (1-y_i) \prod_{r + 1 \le i < j \le n - 1} (y_i - y_j)^2} = \CT_{\x} M(r-1; 3, 1, 2).
\]
Here we clarify the series expansion under the change of variables $x_i\to  y_i^{-1}\to x_{n-i}$:
$$\frac{1}{1-x_j/x_i}=\sum_{k\ge 0} (x_j/x_i)^k   \mapsto  \sum_{k\ge 0} (y_i/y_j)^k  \mapsto  \sum_{k\ge 0} (x_{n-i}/x_{n-j})^k= \frac{1}{1-x_{n-i}/x_{n-j}}.$$
Thus in the new constant term, we can still use the order  $1 > x_1 > x_2 > \cdots > x_n >0$.
This completes the proof.
\end{proof}

\begin{lem}\label{lem-part4-2}
Let $r$ be a positive integer. If $n=2r-1$, then
\[
h_n((r-1)^2) = \big(\CT_{\x} M(r - 1; 2, 1, 2) \big)^2.
\]
\end{lem}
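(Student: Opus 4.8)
The plan is to follow the strategy of the proof of Lemma \ref{lem-part4-1} (the even case $n=2r$), adapting it to the odd case, where only a single middle variable can be eliminated. Writing $n=2r-1$ and $t=(r-1)^2$ in \eqref{e-hn-resi-mul}, I would first record the exponents of the $y_i$: a short computation gives $-t+(i-1)(n-i)=-(r-i)^2$, so the exponent of $y_i$ equals $-(r-i)^2$ for $2\le i\le n-1$, while the exponents of $y_1$ and $y_n$ are $-(r-2)^2$ and $-(r-1)^2=-(r-n)^2$, respectively. All of these are $\le 0$, and among $2\le i\le n-1$ the only vanishing exponent occurs at $i=r$. Hence the integrand is a genuine power series in $y_r$, and $\CT_{y_r}$ is obtained simply by setting $y_r=0$.

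The next step is to observe that putting $y_r=0$ kills every factor $1-y_{i+1}\cdots y_j$ (and $1-y_1\cdots y_j$) whose index block straddles $r$, so that the surviving factors and monomials split according to whether their indices lie in $\{1,\dots,r-1\}$ or in $\{r+1,\dots,n\}$. Consequently the substituted integrand factors as a product of a ``lower'' expression in $y_1,\dots,y_{r-1}$ and an ``upper'' expression in $y_{r+1},\dots,y_n$, giving $h_n((r-1)^2)=F_1\cdot F_2$ exactly as in the even case. The first factor $F_1$ is handled by the substitution $y_1=x_1,\ y_i=x_i/x_{i-1}$: after collecting powers one checks that $x_1$ drops out and that each remaining variable occurs to power $-1$, yielding $F_1=\CT_{\x}M(r-2;2,3,2)$, which equals $\CT_{\x}M(r-1;2,1,2)$ by Lemma \ref{lem-Morris}.

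For $F_2$ I would use the change of variables $x_i=x_r\,y_{r+1}\cdots y_i$ for $r+1\le i\le n$ with $x_r$ kept as a free variable, so that $y_{i+1}\cdots y_j=x_j/x_i$ for $r\le i<j\le n$. A power count then shows that every $x_i$ with $r\le i\le n-1$ occurs to the constant power $2(r-1)$ while $x_n$ occurs to power $-(r-1)^2$, and that
\[
F_2=\CT_{\x}\frac{x_n^{-(r-1)^2}\prod_{i=r}^{n-1}x_i^{2(r-1)}\prod_{i=r}^{n-1}(x_i-x_n)}{\prod_{r\le i<j\le n}(x_i-x_j)^2},
\]
which is homogeneous of degree $0$ in $x_r,\dots,x_n$ and is precisely the odd-case analogue of the expression obtained for $F_2$ in Lemma \ref{lem-part4-1}. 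Applying the reciprocal substitution $x_i\mapsto y_i^{-1}$ (Lemma \ref{lem-changevars}), relabelling to restore the expansion order as in Lemma \ref{lem-part4-1}, and then using the degree-$0$ homogeneity to set the distinguished variable (the image of $x_n$) equal to $1$ converts the factors $x_i-x_n$ into $(1-y_i)$-type factors and absorbs the power $x_n^{-(r-1)^2}$; the two signs $(-1)^{r-1}$ that arise cancel, and one is left with $F_2=\CT_{\x}M(r-1;2,1,2)$. Combining the two factors gives $h_n((r-1)^2)=F_1F_2=\big(\CT_{\x}M(r-1;2,1,2)\big)^2$.

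The main obstacle is the analysis of $F_2$. Unlike the even case, where setting two consecutive middle variables to $0$ produces two cleanly separated blocks, here only $y_r=0$ is available, so one must keep the boundary variable $x_r$ free, verify that the resulting expression is homogeneous of degree $0$, and carry out the reciprocal substitution and normalization carefully, paying close attention to the index ranges and the sign bookkeeping, in order to recognize the Morris constant term $\CT_{\x}M(r-1;2,1,2)$. The exponent computations for both $F_1$ and $F_2$ are routine but must be done with care, since the endpoint exponents at $i=1$ and $i=n$ differ from the generic value $-(r-i)^2$.
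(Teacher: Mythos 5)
Your proposal is correct and takes essentially the same approach as the paper: the paper's own proof of this lemma merely notes that only $y_r=0$ can be substituted and then states that the rest ``proceeds similarly to the proof of Lemma \ref{lem-part4-1}'', and your argument is exactly that adaptation, with the splitting $h_n((r-1)^2)=F_1F_2$, the evaluation $F_1=\CT_{\x}M(r-2;2,3,2)=\CT_{\x}M(r-1;2,1,2)$ via Lemma \ref{lem-Morris}, and the inversion-plus-homogeneity evaluation $F_2=\CT_{\x}M(r-1;2,1,2)$ all checking out. The only quibble is with your framing of the ``main obstacle'': keeping a boundary variable free and invoking degree-$0$ homogeneity is not peculiar to the odd case, since in Lemma \ref{lem-part4-1} the factor $F_2$ involves $x_{r+1},\dots,x_n$ only through ratios and plays exactly the same game with $x_{r+1}$.
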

\begin{proof}
By \eqref{e-hn-resi-mul}, we have
\[
h_n((r-1)^2) = \CT_{\y} \frac{ y_1^{-(r-2)^2} y_n^{-(r-1)^2} \prod_{i=2}^{n-1} y_i^{-(r-i)^2} \prod_{i=2}^{n-1} (1 - y_{i + 1} y_{i+2} \cdots y_n)}{(1-y_1 y_2 \cdots y_n)^{2} \prod_{i=2}^{n-1} (1-y_1 y_2 \cdots y_i)^{3} \prod_{2 \le i<j \le n} (1-y_{i+1}  y_{i+2} \cdots y_j)^{2}}.
\]
Since $-(r-i)^2 = 0$ only when $i=r$, $h_n((r-1)^2)$ is a power series in $y_{r}$. Then the remaining work proceeds similarly to the proof of Lemma \ref{lem-part4-1}.
\end{proof}

\medskip
The second one is by Lemma \ref{lem-residue-2} and Corollary \ref{cor-barH2xi1}. We can also write \eqref{e-CTH21} as follows:
\begin{equation}\label{e-hn-resi-1}
h_n(t) =  \CT_{\x} \frac{(1-x_1)^{-t-1} \prod_{i=2}^{n-1} (1 - x_i)^{n-2} \prod_{i=2}^{n-1} (x_1-x_i)}{x_1 \prod_{i=2}^{n-1} x_i^2 \prod_{1 \le i<j \le n-1} (x_i-x_j)^{2}},
\end{equation}
where we make the change of variables $y_i = x_{n+1-i}$ after applying Lemma \ref{lem-residue-2}. It should be clear that we shall still treat $1 > x_1 > x_2 > \cdots > x_{n-1} > 0$ when expanded as Laurent series.

The formula in \eqref{e-hn-resi-1} also yields several results. Lemma \ref{lem-part1} states that $h_n(t)$ is a polynomial in $t$, which allows us to
extend the definition of $h_n(t)$ for negative $t$. Both Lemmas \ref{lem-part3-2} and \ref{lem-part2} are based on this representation.

\begin{lem}\label{lem-part1}
For a positive integer $n$, $h_n(t)$ is a polynomial in $t$ of degree $(n-1)^2$ with leading coefficient $ \frac{1}{(n-1)^2!}\CT_{\x} M(n-1; 1, 1, 2)$.
\end{lem}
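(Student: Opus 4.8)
The plan is to exploit the fact that $t$ enters \eqref{e-hn-resi-1} only through the factor $(1-x_1)^{-t-1}$. First I would cancel $\prod_{i=2}^{n-1}(x_1-x_i)$ in the numerator against one power of the $i=1$ factors of $\prod_{1\le i<j\le n-1}(x_i-x_j)^2$, so that the only $x_1$-dependence left is $\frac{(1-x_1)^{-t-1}}{x_1\prod_{i=2}^{n-1}(x_1-x_i)}$, while the remaining variables carry the $t$-free kernel $P:=\frac{\prod_{i=2}^{n-1}(1-x_i)^{n-2}}{\prod_{i=2}^{n-1}x_i^{2}\prod_{2\le i<j\le n-1}(x_i-x_j)^{2}}$. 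Expanding $(1-x_1)^{-t-1}=\sum_{k\ge0}\binom{t+k}{k}x_1^k$ together with the geometric series $\frac{1}{x_1-x_i}=\sum_{m_i\ge0}x_i^{m_i}x_1^{-m_i-1}$ (legitimate under $1>x_1>\cdots>x_{n-1}>0$) and taking $\CT_{x_1}$ kills everything except $k=(n-1)+|\m|$, which yields
\[
h_n(t)=\sum_{\m\ge \mathbf{0}}\binom{t+(n-1)+|\m|}{(n-1)+|\m|}\,b_{\m},\qquad b_{\m}:=\CT_{x_2,\dots,x_{n-1}}\Big(P\prod_{i=2}^{n-1}x_i^{m_i}\Big),
\]
where $\m=(m_2,\dots,m_{n-1})$, $|\m|=\sum_i m_i$, and each $b_{\m}$ is the $t$-free coefficient of $\prod_i x_i^{-m_i}$ in $P$.

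Next I would establish polynomiality and the degree bound by a homogeneity count. The denominator of $P$ is homogeneous of degree $(n-1)(n-2)$ while the numerator $\prod_{i}(1-x_i)^{n-2}$ has nonnegative degree, so every Laurent monomial of $P$ has total degree at least $-(n-1)(n-2)$. Since $b_{\m}$ reads off the monomial of total degree $-|\m|$, it follows that $b_{\m}=0$ whenever $|\m|>(n-1)(n-2)$; as only finitely many $\m\ge\mathbf{0}$ satisfy $|\m|\le (n-1)(n-2)$, the above sum is finite. Because $\binom{t+(n-1)+|\m|}{(n-1)+|\m|}$ is a polynomial in $t$ of degree $(n-1)+|\m|$ with leading coefficient $1/\big((n-1)+|\m|\big)!$, we conclude that $h_n(t)$ is a polynomial in $t$ of degree at most $(n-1)+(n-1)(n-2)=(n-1)^2$.

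To pin down the leading term I would collect exactly the contributions with $|\m|=(n-1)(n-2)$, whose common $t$-degree is $(n-1)^2$, so that the coefficient of $t^{(n-1)^2}$ equals $\frac{1}{(n-1)^2!}\sum_{|\m|=(n-1)(n-2)}b_{\m}$. At this extreme degree the numerator of $P$ can only contribute its constant term, hence $b_{\m}$ equals the coefficient of $\prod_i x_i^{-m_i}$ in the homogeneous part $P_0:=\big(\prod_{i=2}^{n-1}x_i^{2}\prod_{2\le i<j\le n-1}(x_i-x_j)^{2}\big)^{-1}$. Summing over all $\m\ge\mathbf{0}$ and using $\sum_{\m\ge\mathbf{0}}\prod_i x_i^{m_i}=\prod_{i=2}^{n-1}(1-x_i)^{-1}$, the homogeneity of $P_0$ forces every surviving term to have $|\m|=(n-1)(n-2)$, so
\[
\sum_{|\m|=(n-1)(n-2)}b_{\m}=\CT_{x_2,\dots,x_{n-1}}\frac{1}{\prod_{i=2}^{n-1}(1-x_i)\prod_{i=2}^{n-1}x_i^{2}\prod_{2\le i<j\le n-1}(x_i-x_j)^{2}}=\CT_{\x}M(n-2;3,1,2).
\]
I would then rewrite this via Corollary \ref{cor-symMorris} as $\CT_{\x}M(n-2;1,3,2)$ and apply Lemma \ref{lem-Morris} with $k_1=1,\,k_3=2$ to identify it with $\CT_{\x}M(n-1;1,1,2)$, which is positive by Theorem \ref{theo-Morris}. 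This simultaneously gives the claimed leading coefficient $\frac{1}{(n-1)^2!}\CT_{\x}M(n-1;1,1,2)$ and, being nonzero, forces the degree to be exactly $(n-1)^2$.

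The step I expect to be the main obstacle is this leading-coefficient computation: unlike the degree bound, the top $t$-coefficient is not a single constant term but the sum of $b_{\m}$ over all compositions $\m$ of $(n-1)(n-2)$, and the crux is to recognize this sum as a Morris constant term. Two facts make it work: at maximal $|\m|$ the numerator $\prod_i(1-x_i)^{n-2}$ is irrelevant (only its constant term survives), and appending the factor $\prod_i(1-x_i)^{-1}$ reassembles the individual coefficients into a single constant term without creating spurious lower-degree terms, precisely because $P_0$ is homogeneous. A secondary, routine point is to justify interchanging the infinite $\m$-sum with $\CT_{x_1}$ and with the outer constant term; this is harmless since each target monomial receives only finitely many contributions, provided the order $1>x_1>\cdots>x_{n-1}>0$ is respected throughout.
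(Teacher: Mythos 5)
Your proof is correct, and it reaches the statement by a route that is genuinely different in its middle portion, even though the skeleton is parallel to the paper's: both arguments start from \eqref{e-hn-resi-1}, exploit the fact that $t$ enters only through $(1-x_1)^{-t-1}$, identify the leading coefficient with $\CT_{\x} M(n-2;3,1,2)$, and then finish with the identical chain $\CT_{\x}M(n-2;3,1,2)=\CT_{\x}M(n-2;1,3,2)=\CT_{\x}M(n-1;1,1,2)$ via Corollary \ref{cor-symMorris} and Lemma \ref{lem-Morris}. The difference is the mechanism. The paper applies Lemma \ref{lem-changevars} with the substitution $x_i=y_1y_2\cdots y_i$, so that the exponent of $y_1$ automatically records total degree: the constant term in $y_1$ of $F_1=(1-y_1)^{-t-1}\prod_{i=2}^{n-1}(1-y_1\cdots y_i)^{n-2}/y_1^{(n-1)^2}$ is a polynomial $Q(t)$ of degree $(n-1)^2$ with top coefficient $1/(n-1)^2!$, and the $t$-free factor $F_2$ is converted into the Morris form by the inverse substitution. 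You instead stay in the $x$-coordinates, expand each $1/(x_1-x_i)$ as a geometric series indexed by $\m$, and do the degree bookkeeping by hand through the homogeneity of $P_0$; the sum of the top-degree coefficients $b_{\m}$ is then reassembled into a single constant term by the generating-function identity $\sum_{\m\ge \mathbf{0}}\prod_i x_i^{m_i}=\prod_i(1-x_i)^{-1}$, which is your substitute for the paper's back-substitution. Your route is more elementary (no change-of-variables machinery), at the cost of having to justify interchanging infinite sums with $\CT$, which you do for the right reason (only finitely many $\m$ contribute, by homogeneity). Two small merits of your write-up worth keeping: you make explicit that the leading coefficient is nonzero (positive by Theorem \ref{theo-Morris}), so the degree is exactly $(n-1)^2$ rather than at most $(n-1)^2$ --- a point the paper's proof leaves implicit --- and your formula $h_n(t)=\sum_{\m}\binom{t+(n-1)+|\m|}{(n-1)+|\m|}b_{\m}$ exhibits $h_n$ in the binomial basis, which is slightly more information than polynomiality alone.
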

\begin{proof}
We make the change of variables $x_i = y_1 y_2 \cdots y_i$ for all $i$ again. Thus \eqref{e-hn-resi-1} becomes $h_n(t) = \CT_{\y} F_1 \cdot F_2$, where
\begin{align*}
F_1 &= \frac{(1-y_1)^{-t-1} \prod_{i=2}^{n-1} (1 - y_1 y_2 \cdots y_i)^{n-2}}{y_1^{(n-1)^2}}, \\
F_2 &= \frac{\prod_{i=2}^{n-1} (1 - y_2 \cdots y_i)}{\prod_{i=2}^{n-1} y_i^{(n-i)(n-i+1)} \prod_{1 \le i<j \le n-1} (1 - y_{i+1} \cdots y_j)^{2}}.
\end{align*}
Observe that
\[
Q(t) = \CT_{y_1} F_1 = \big[ y_1^{(n-1)^2} \big] \Bigg( \sum_{k=0}^{(n-1)^2} \binom{t+k}{k} y_1^k \Bigg) \prod_{i=2}^{n-1} \Bigg( \sum_{k=0}^{n-2} \binom{n-2}{k} (-y_2 \cdots y_i)^k y_1^k \Bigg)
\]
is a polynomial in $t$ of degree $(n-1)^2$.
The coefficients $[t^\ell]\; Q(t)$ are polynomials in $y_2, y_3, \dots, y_{n-1}$. In particular, the leading coefficient $[t^{(n-1)^2}]\; Q(t)$ is $\frac{1}{(n-1)^2!}$. Therefore,
$$h_n(t) = \sum_{\ell=0}^{(n-1)^2}  t^{\ell} \cdot  \CT_{\y} \big([t^\ell]\; Q(t) \big) F_2 $$
is a polynomial in $t$ of degree $(n-1)^2$.

For the leading coefficient, we make the change of variables $y_2 = x_2$ and $y_i = x_i/x_{i-1}$ for $i > 2$ to obtain
\begin{align*}
(n-1)^2! \cdot [t^{(n-1)^2}] \; h_n(t) &=  \CT_{\y} F_2= \CT_{\x} \frac{1}{\prod_{i=2}^{n-1} x_i^2 \prod_{i=2}^{n-1} (1 - x_i) \prod_{2 \le i<j \le n-1} (x_i - x_j)^{2}}  \\
&= \CT_{\x} M(n-2; 3, 1, 2) \\ 
&= \CT_{\x} M(n-2; 1, 3, 2) \hspace{3.1cm} (\text{by Corollary \ref{cor-symMorris}}) \\
&= \CT_{\x} M(n-1; 1, 1, 2). \hspace{3cm} (\text{by Lemma \ref{lem-Morris}})
\end{align*}
This completes the proof.
\end{proof}

\begin{lem}\label{lem-part3-2}
For any $-n+1 \le t \le -1$, we have $h_n(t) = 0$.
\end{lem}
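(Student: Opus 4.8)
The plan is to work directly with the single-variable constant term representation \eqref{e-hn-resi-1} and to show that, for the relevant negative integer values of $t$, the integrand has no constant term already in the variable $x_1$. Write $s=-t$, so that the hypothesis $-n+1\le t\le -1$ becomes $1\le s\le n-1$ and the exponent $-t-1=s-1$ is a nonnegative integer. Then $(1-x_1)^{-t-1}=(1-x_1)^{s-1}$ is an honest polynomial in $x_1$, and \eqref{e-hn-resi-1} reads
\[
h_n(t)=\CT_{\x}\; R(x_2,\dots,x_{n-1})\cdot\frac{(1-x_1)^{s-1}\prod_{i=2}^{n-1}(x_1-x_i)}{x_1\prod_{j=2}^{n-1}(x_1-x_j)^2},
\]
where $R(x_2,\dots,x_{n-1})=\dfrac{\prod_{i=2}^{n-1}(1-x_i)^{n-2}}{\prod_{i=2}^{n-1}x_i^2\prod_{2\le i<j\le n-1}(x_i-x_j)^2}$ collects all factors free of $x_1$. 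The crucial first step is the cancellation: the numerator factor $\prod_{i=2}^{n-1}(x_1-x_i)$ kills exactly one copy of the $i=1$ part $\prod_{j=2}^{n-1}(x_1-x_j)^2$ of the Vandermonde square in the denominator, leaving
\[
h_n(t)=\CT_{\x}\; R(x_2,\dots,x_{n-1})\cdot\frac{(1-x_1)^{s-1}}{x_1\prod_{j=2}^{n-1}(x_1-x_j)}.
\]

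Next I would read off the powers of $x_1$ under the prescribed expansion order $1>x_1>x_2>\cdots>x_{n-1}>0$. Each factor $\tfrac{1}{x_1-x_j}$ is expanded as $\tfrac{1}{x_1}\sum_{k\ge 0}(x_j/x_1)^k$, contributing only powers of $x_1$ that are $\le -1$; there are $n-2$ such factors, together with the explicit $\tfrac{1}{x_1}$, while $(1-x_1)^{s-1}$ contributes powers between $0$ and $s-1$. Hence every monomial in the Laurent expansion of the integrand has $x_1$-degree at most $(s-1)-1-(n-2)=s-n\le -1$, which is strictly negative because $s\le n-1$. Consequently the coefficient of $x_1^0$ vanishes, i.e. the $\CT_{x_1}$ of the integrand is $0$, and therefore $h_n(t)=\CT_{x_2,\dots,x_{n-1}}\CT_{x_1}(\,\cdots\,)=0$.

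I expect the only real care needed to be bookkeeping rather than a genuine obstacle: one must justify the cancellation at the level of the exact product (no stray signs, since both $\prod_{i=2}^{n-1}(x_1-x_i)$ and the $i=1$ Vandermonde factors are written with $x_1$ first), and confirm that the expansion convention indeed forces each $\tfrac{1}{x_1-x_j}$ into strictly negative powers of $x_1$, so that the degree bound $s-n$ applies to the whole series and not merely formally. Once these are in place the vanishing is immediate, and no appeal to the functional equation in part~(2) of Theorem~\ref{theo-main} is required; this is essential, since the interval $\{-n+1,\dots,-1\}$ is invariant under $t\mapsto -t-n$ and hence could not be deduced from Lemma~\ref{lem-part3-1} by symmetry alone.
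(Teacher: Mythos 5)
Your proposal is correct and follows essentially the same route as the paper's own proof: both start from \eqref{e-hn-resi-1}, cancel $\prod_{i=2}^{n-1}(x_1-x_i)$ against one copy of the squared Vandermonde factors involving $x_1$, and then observe that under the expansion order $1>x_1>\cdots>x_{n-1}>0$ every monomial has $x_1$-degree at most $-t-n\le -1$, so $\CT_{x_1}$ vanishes. The only cosmetic difference is that the paper pulls out the factor $x_1^{-n+1}$ explicitly before expanding, whereas you count the degree contribution of each $\tfrac{1}{x_1-x_j}$ separately; the degree bound $s-n=-t-n$ is identical.
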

\begin{proof}
By \eqref{e-hn-resi-1}, we can write
\begin{align*}
h_n(t) &= \CT_{\x} \frac{\prod_{i=2}^{n-1} (1 - x_i)^{n-2}}{\prod_{i=2}^{n-1} x_i^2 \prod_{2 \le i<j \le n-1} (x_i-x_j)^{2}} \CT_{x_1} \frac{(1-x_1)^{-t-1} }{x_1 \prod_{i=2}^{n-1} (x_1-x_i)} \\
&= \CT_{\x} \frac{\prod_{i=2}^{n-1} (1 - x_i)^{n-2}}{\prod_{i=2}^{n-1} x_i^2 \prod_{2 \le i<j \le n-1} (x_i-x_j)^{2}} \CT_{x_1} \frac{(1-x_1)^{-t-1} x_1^{-n+1}}{ \prod_{i=2}^{n-1} (1-x_i/x_1)}.
\end{align*}
When $-n+1 \le t \le -1$, the expansion of $\frac{(1-x_1)^{-t-1} x_1^{-n+1}}{ \prod_{i=2}^{n-1} (1-x_i/x_1)}$ only contains negative powers in $x_1$. Hence the constant term in $x_1$ equals $0$, which implies that $h_n(t) = 0$.
\end{proof}

\begin{lem}\label{lem-part2}
For a positive integer $n$, $h_n(t) = (-1)^{n-1} h_n(-t-n)$.
\end{lem}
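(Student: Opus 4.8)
The plan is to prove the functional equation $h_n(t) = (-1)^{n-1} h_n(-t-n)$ by starting from the integral/constant-term representation \eqref{e-hn-resi-1} and applying a change of variables that sends $t \mapsto -t-n$ while producing exactly the sign $(-1)^{n-1}$. The key observation is that in \eqref{e-hn-resi-1} the dependence on $t$ is entirely concentrated in the single factor $(1-x_1)^{-t-1}$, so the target $-t-n$ corresponds to replacing this factor by $(1-x_1)^{t+n-1}$. The natural operation to achieve such an inversion is the reciprocal substitution $x_i \mapsto x_i^{-1}$ on all $n-1$ variables, which is an instance of Lemma \ref{lem-changevars} with $W = -I$; this swaps $(1-x_1)$ with $-x_1^{-1}(1-x_1)$ and inverts the Vandermonde-type and power factors in a controlled way.

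First I would write out \eqref{e-hn-resi-1} explicitly and apply the change of variables $x_i = z_i^{-1}$ for all $1 \le i \le n-1$ via Lemma \ref{lem-changevars}, carefully tracking each factor. The factor $(1-x_1)^{-t-1}$ becomes $(1 - z_1^{-1})^{-t-1} = (-z_1^{-1})^{-t-1}(1-z_1)^{-t-1}$, which is where the new exponent $+t+1$ (up to the right shift) and part of the sign will emerge. Each factor $(x_1 - x_i)$ becomes $(z_1^{-1} - z_i^{-1}) = -z_1^{-1}z_i^{-1}(z_1 - z_i)$, each $(x_i - x_j)^2$ becomes $z_i^{-2}z_j^{-2}(z_i-z_j)^2$, the monomials $x_i^{\pm}$ invert, and $(1-x_i)$ becomes $-z_i^{-1}(1-z_i)$. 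After collecting all the resulting prefactors $-z_i^{-1}$ and powers of $z_i$, the integrand should reduce to the same shape as \eqref{e-hn-resi-1} but with $t$ replaced by $-t-n$, multiplied by an overall monomial and an overall sign.

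The step I expect to be the main obstacle is the bookkeeping of the accumulated powers of the $z_i$ and the precise determination of the overall sign $(-1)^{n-1}$. One must verify that all the leftover monomial powers of each $z_i$ cancel so that the integrand truly matches \eqref{e-hn-resi-1} with the substituted parameter, and that no spurious monomial survives that would alter the constant term. The sign has three sources: the $(-1)^{-t-1}$ from the $(1-x_1)$-factor, the signs $-1$ from each of the $n-2$ factors $(x_1 - x_i)$, and the signs $-1$ from each $(1-x_i)^{n-2}$ factor; since $t$ is an integer the symbolic $(-z_1^{-1})^{-t-1}$ contributes $(-1)^{-t-1}$ only after the monomial power $z_1^{t+1}$ is absorbed, so care is needed to treat the $t$-dependence as a formal polynomial identity rather than term-by-term. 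Because Lemma \ref{lem-part1} guarantees $h_n(t)$ is a genuine polynomial in $t$, it suffices to establish the identity for all integers $t$ in a suitable range (or simply as a formal Laurent-series identity), and then invoke polynomiality to extend it to all $t$.

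Finally I would reconcile the series-expansion conventions: after the substitution $x_i = z_i^{-1}$ the natural ordering on the $z_i$ is reversed, so I would follow with the relabelling $z_i = x_{n-i}$ (exactly as done in the proofs of Lemmas \ref{lem-part4-1} and \ref{lem-part1}) to restore the standard order $1 > x_1 > \cdots > x_{n-1} > 0$, checking as in those proofs that each geometric series $\frac{1}{1 - x_j/x_i}$ transforms consistently. Once the integrand is confirmed to equal the $t \mapsto -t-n$ version of \eqref{e-hn-resi-1} up to the factor $(-1)^{n-1}$, the constant term identity $h_n(t) = (-1)^{n-1} h_n(-t-n)$ follows immediately, completing the proof.
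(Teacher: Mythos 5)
Your overall strategy---start from \eqref{e-hn-resi-1}, observe that all $t$-dependence sits in the single factor $(1-x_1)^{-t-1}$, and find a change of variables that turns this factor into $(1-\cdot)^{t+n-1}$ at the cost of a sign $(-1)^{n-1}$---is exactly the right one, and it is the paper's. But the substitution you chose cannot accomplish it, and this is a genuine gap rather than a bookkeeping issue. Under $x_1 = z_1^{-1}$ one has $1-z_1^{-1} = -z_1^{-1}(1-z_1)$, hence
\[
(1-x_1)^{-t-1} \;=\; (-1)^{t+1}\, z_1^{t+1}\,(1-z_1)^{-t-1},
\]
so the exponent on the binomial factor is \emph{unchanged} (still $-t-1$); the reciprocal substitution only produces a $t$-dependent monomial $z_1^{t+1}$ and a $t$-dependent sign $(-1)^{t+1}$. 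Carrying your computation through all factors gives
\[
F\big(z_1^{-1},\dots,z_{n-1}^{-1}\big) \;=\; (-1)^{t+1}\,
\frac{z_1^{\,t+n}\,(1-z_1)^{-t-1}\prod_{i=2}^{n-1} z_i^{\,n-1}(1-z_i)^{n-2}\prod_{i=2}^{n-1}(z_1-z_i)}{\prod_{1\le i<j\le n-1}(z_i-z_j)^{2}},
\]
which is \emph{not} of the form $(-1)^{n-1}$ times the integrand of \eqref{e-hn-resi-1} with $t$ replaced by $-t-n$: the target has $(1-x_1)^{t+n-1}$, a power growing with $t$, and carries no $t$-dependent monomial or sign. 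No relabelling or re-expansion repairs this: by Lemma \ref{lem-changevars} the displayed expression is merely another constant-term representation of $h_n(t)$ itself (re-expanding $(1-z_1)^{-t-1}$ at infinity collapses it back to the original form, since applying $W=-I$ twice is the identity), so extracting $h_n(-t-n)$ from it would still require precisely the transformation you are missing.

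The transformation that genuinely inverts the exponent is the M\"obius substitution of Lemma \ref{lem-residue-1}, namely $x_i = -y_i/(1-y_i)$, for which $1-x_i = (1-y_i)^{-1}$. This sends $(1-x_1)^{-t-1}\mapsto (1-y_1)^{t+1}$ with no sign at all, while $x_i - x_j = (y_j-y_i)/\big((1-y_i)(1-y_j)\big)$ and $x_i = -y_i/(1-y_i)$ keep every factor inside the same structural family. Collecting the accumulated powers of $(1-y_i)$ (including the Jacobian $\prod_{i}(1-y_i)^{-1}$ supplied by Lemma \ref{lem-residue-1}) turns $(1-y_1)^{t+1}$ into $(1-y_1)^{t+n-1}$ and restores $\prod_{i\ge 2}(1-y_i)^{n-2}$, and the only signs come from $x_1$ in the denominator and the $n-2$ factors $(x_1-x_i)$, giving exactly $(-1)^{n-1}$, independent of $t$. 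Comparing with \eqref{e-hn-resi-1} at $-t-n$ then yields the lemma; this is the paper's proof, and your write-up becomes correct once the reciprocal substitution is replaced by this one.
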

\begin{proof}
By applying Lemma \ref{lem-residue-1} to \eqref{e-hn-resi-1} and  then making the change of variables $y_i = x_i$ for all $i$, we obtain
\[
h_n(t) = (-1)^{n-1} \CT_{\x} \frac{(1-x_1)^{t+n-1} \prod_{i=2}^{n-1} (1 - x_i)^{n-2} \prod_{i=2}^{n-1} (x_1-x_i)}{x_1 \prod_{i=2}^{n-1} x_i^2 \prod_{1 \le i<j \le n-1} (x_i-x_j)^{2}}.
\]
Taking $t = -t - n$ in above and comparing with \eqref{e-hn-resi-1} yields $h_n(t) = (-1)^{n-1} h_n(-t - n)$.
\end{proof}

Now we conclude this section by giving a proof of our main result.
\begin{proof}[Proof of Theorem \ref{theo-main}]
Parts (1) and (2) are actually Lemmas \ref{lem-part1} and \ref{lem-part2}, respectively.
Part (3) holds by Lemmas \ref{lem-part3-1} and \ref{lem-part3-2} with the application of part (2).
Lemmas \ref{lem-part4-1} and \ref{lem-part4-2} cover part (4).
\end{proof}

\section{The computation of $h_n(t)$}\label{s-recursion}
In this section, we first briefly introduce several formulas about elementary symmetric functions.
See \cite{book-Mac-SymFun} or \cite{book-Stanley-EC2} for a comprehensive overview of symmetric functions.
Next we transform $h_n(t)$ into $D_n(0, t + n - 1, n - 2 , 1, 2)$, where $D_n(\ell,t,k_1,k_2,k_3)$ is defined in \eqref{e-def-Dn-1} involving elementary symmetric functions. Using the well-known fact $\res_{x_i} \partial_{x_i} F=0$, we derive a relation among $D_n(\ell,t,k_1,k_2,k_3)$ (See Proposition \ref{prop-equation-ell}). Furthermore, for a fixed $n$, we can obtain a recursion of $D_n(0, t, n - 2 , 1, 2)$ (and hence of $h_n(t)$) with respect to $t$ by the method of undetermined coefficients. Finally we can construct $h_n(t)$ using the recursion and Theorem \ref{theo-main}.

\subsection{Basic formulas about elementary symmetric functions}\label{ss-SymFunc}
Let $X := x_1+x_2+\cdots +x_n$, and let
\[
e_{\ell}[X] := \sum_{1\le i_1 < i_2 < \cdots < i_{\ell} \le n} x_{i_1} x_{i_2} \cdots x_{i_\ell}
\]
be the $\ell$-th elementary symmetric function on the variables $x_1, x_2, \dots, x_n$. Traditionally, we set $e_{0}[X] = 1$, and $e_{\ell}[X] = 0$ if $\ell < 0$ or $\ell > n$.
We need the well-known formula
\begin{equation}\label{e-hasxi}
e_{\ell}[X] = e_{\ell}[X - x_i] + x_i e_{\ell - 1} [X - x_i].
\end{equation}

We also need the following known formulas, which will be frequently used later.
\begin{lem}\label{lem-eFormula}
For any $0 \le \ell \le n$, we have
\begin{align}
\sum_{i=1}^n e_{\ell}[X-x_i] &=(n-\ell) e_{\ell}[X], \label{e-sumel}\\
\sum_{i=1}^n x_i e_{\ell}[X-x_i] &=(\ell+1)e_{\ell+1}[X], \label{e-sumxel} \\
\sum_{1\leq i\neq j \leq n} e_{\ell}[X-x_i-x_j] &=(n-1-\ell)(n-\ell) e_\ell[X].  \label{e-sumelj}
\end{align}
\end{lem}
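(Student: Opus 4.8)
The plan is to prove the three identities \eqref{e-sumel}, \eqref{e-sumxel}, and \eqref{e-sumelj} directly from the defining relation \eqref{e-hasxi}, which expresses $e_\ell[X]$ in terms of the variable $x_i$ and the elementary symmetric functions of the remaining variables. The key observation is that each identity can be obtained by summing a suitable version of \eqref{e-hasxi} over the index $i$ (or over pairs $i \neq j$) and then using a counting argument to collect terms.

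For \eqref{e-sumel}, I would rearrange \eqref{e-hasxi} as $e_\ell[X-x_i] = e_\ell[X] - x_i e_{\ell-1}[X-x_i]$ and sum over $i$. Alternatively, and more cleanly, I would count directly: a monomial $x_{i_1}\cdots x_{i_\ell}$ appearing in $e_\ell[X]$ survives in $e_\ell[X-x_i]$ exactly when $i \notin \{i_1,\dots,i_\ell\}$, which happens for $n-\ell$ choices of $i$. Hence $\sum_{i=1}^n e_\ell[X-x_i] = (n-\ell)e_\ell[X]$. For \eqref{e-sumxel}, the same counting idea applies: each monomial $x_j x_{i_1}\cdots x_{i_\ell}$ of $e_{\ell+1}[X]$ arises as $x_i e_\ell[X-x_i]$ precisely when we choose $i=j$ to be any one of the $\ell+1$ factors and the remaining $\ell$ factors form the term in $e_\ell[X-x_i]$; this gives the multiplicity $\ell+1$ and yields $\sum_{i=1}^n x_i e_\ell[X-x_i] = (\ell+1)e_{\ell+1}[X]$.

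For \eqref{e-sumelj}, I would iterate the single-deletion identity. Applying \eqref{e-sumel} to the $(n-1)$-variable alphabet $X - x_j$ gives, for each fixed $j$, the inner sum $\sum_{i \neq j} e_\ell[X - x_j - x_i] = (n-1-\ell)\,e_\ell[X-x_j]$. Summing this over $j$ and applying \eqref{e-sumel} once more produces $(n-1-\ell)(n-\ell)\,e_\ell[X]$, as desired. Care must be taken that the range $1 \le i \neq j \le n$ in the statement means ordered pairs, so no factor of $2$ intrudes; the counting is consistent with the double-deletion interpretation.

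I do not expect any serious obstacle here, since all three statements are standard combinatorial identities. The only points requiring attention are bookkeeping of the summation ranges (ordered versus unordered pairs in \eqref{e-sumelj}) and the boundary conventions $e_\ell[X]=0$ for $\ell<0$ or $\ell>n$, which guarantee that the formulas remain valid at the extreme values of $\ell$. A purely bijective monomial-counting argument avoids even these edge-case subtleties, so I would present the proof in that language for maximal transparency.
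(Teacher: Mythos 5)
Your proposal is correct and matches the paper's proof essentially step for step: the same monomial-counting arguments for \eqref{e-sumel} and \eqref{e-sumxel}, and the same iteration of \eqref{e-sumel} over the deleted alphabet $X-x_j$ (summed over ordered pairs) for \eqref{e-sumelj}. Nothing to add.
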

\begin{proof}
Observe that $e_\ell[X-x_i]$ contains the term $x_{i_1} x_{i_2} \cdots x_{i_\ell}$ if and only if $i \notin \{i_1, i_2, \dots, i_{\ell}\}$ for any $1 \le i \le n$ and $1\le i_1 < i_2 < \cdots < i_{\ell} \le n$. Thus each term of $e_{\ell}[X]$ appears $n-\ell$ times in $\sum_{i=1}^n e_{\ell}[X-x_i]$. Then equation \eqref{e-sumel} holds.

Equation \eqref{e-sumxel} holds by an analogous reasoning, that is, $x_i e_\ell[X-x_i]$ contains the term $x_{i_1} x_{i_2} \cdots x_{i_{\ell + 1}}$ if and only if $i \in \{i_1, i_2, \dots, i_{\ell + 1}\}$ for any $1 \le i \le n$ and $1\le i_1 < i_2 < \cdots < i_{\ell+1} \le n$.

Equation \eqref{e-sumelj} can be obtained using \eqref{e-sumel} twice. More precisely,
\begin{align*}
\sum_{1\leq i\neq j \leq n} e_{\ell}[X-x_i-x_j] &= \sum_{i=1}^{n} \sum_{j=1, j \neq i}^{n} e_{\ell}[(X-x_i) - x_j] \\
&= \sum_{i=1}^{n} (n-1-\ell) e_\ell[X-x_i] = (n-1-\ell)(n-\ell) e_\ell[X].
\end{align*}
This completes the proof.
\end{proof}

Denote by
\begin{equation}\label{e-Fxixj}
  F(x_i,x_j) := \frac{x_i(1-x_i)}{x_i-x_j} e_\ell[X-x_i].
\end{equation}
We need the following lemma.
\begin{lem}\label{lem-sumFij}
For any distinct $i,j \in \{1, 2, \dots, n\}$, let $F(x_i, x_j)$ be as in \eqref{e-Fxixj}. Then we have
\[
F(x_i,x_j) + F(x_j,x_i) = e_\ell[X-x_i-x_j] + e_{\ell+1}[X-x_i-x_j] - e_{\ell+1}[X].
\]
\end{lem}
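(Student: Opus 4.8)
The plan is to combine the two summands over the common denominator $x_i - x_j$ and then reduce everything to elementary symmetric functions in the variables $X - x_i - x_j$ via repeated use of the recurrence \eqref{e-hasxi}. First I would write
\[
F(x_i,x_j) + F(x_j,x_i) = \frac{x_i(1-x_i)\, e_\ell[X-x_i] - x_j(1-x_j)\, e_\ell[X-x_j]}{x_i-x_j},
\]
using $x_j - x_i = -(x_i - x_j)$ to clear the denominator of $F(x_j,x_i)$. The goal is then to show that the numerator is divisible by $x_i - x_j$, so that the apparent pole along $x_i = x_j$ is removable, and to identify the quotient.

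Next I would peel off the variable $x_j$ from $e_\ell[X-x_i]$ and the variable $x_i$ from $e_\ell[X-x_j]$ using \eqref{e-hasxi}. Writing $A := e_\ell[X-x_i-x_j]$ and $B := e_{\ell-1}[X-x_i-x_j]$ for brevity, this gives
\[
e_\ell[X-x_i] = A + x_j B, \qquad e_\ell[X-x_j] = A + x_i B.
\]
Substituting these into the numerator and expanding, the terms symmetric in $x_i, x_j$ (in particular the two copies of $x_i x_j B$) cancel, and every surviving term carries a visible factor of $x_i - x_j$. After dividing through, I expect to arrive at
\[
F(x_i,x_j) + F(x_j,x_i) = A - (x_i+x_j) A - x_i x_j B.
\]

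Finally I would match this expression against the claimed right-hand side. Applying \eqref{e-hasxi} twice to $e_{\ell+1}[X]$---first peeling off $x_i$, then $x_j$---and invoking $e_\ell[X-x_i] = A + x_j B$ once more yields
\[
e_{\ell+1}[X] = e_{\ell+1}[X-x_i-x_j] + (x_i+x_j) A + x_i x_j B,
\]
so that $-(x_i+x_j) A - x_i x_j B = e_{\ell+1}[X-x_i-x_j] - e_{\ell+1}[X]$. Combining this with the leading term $A = e_\ell[X-x_i-x_j]$ produces exactly the stated identity. The computation is entirely elementary; the only point demanding care is the cancellation step, where one must verify that the numerator is genuinely divisible by $x_i - x_j$ rather than merely appearing so. Since both $A$ and $B$ are free of $x_i$ and $x_j$, this divisibility is transparent once the substitutions are made, and no real obstacle arises beyond careful bookkeeping.
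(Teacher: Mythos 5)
Your proof is correct and follows essentially the same route as the paper: both substitute $e_\ell[X-x_i] = e_\ell[\hat{X}] + x_j e_{\ell-1}[\hat{X}]$ (with $\hat{X} = X - x_i - x_j$), simplify the sum to $(1-x_i-x_j)\,e_\ell[\hat{X}] - x_i x_j\, e_{\ell-1}[\hat{X}]$, and conclude via the expansion $e_{\ell+1}[X] = e_{\ell+1}[\hat{X}] + (x_i+x_j)\,e_\ell[\hat{X}] + x_i x_j\, e_{\ell-1}[\hat{X}]$. The only cosmetic differences are that you clear the common denominator before substituting while the paper groups the coefficients of $e_\ell[\hat{X}]$ and $e_{\ell-1}[\hat{X}]$ directly, and that you derive the final expansion by applying \eqref{e-hasxi} twice where the paper simply quotes it.
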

\begin{proof}
We use the short notation $\hat{X}=X-x_i-x_j$ in this proof.
By \eqref{e-hasxi}, we can write $e_\ell[X-x_i]=e_\ell[\hat{X}] + x_j e_{\ell-1}[\hat{X}]$. Then we obtain
\begin{align*}
F(x_i,x_j)+F(x_j,x_i) = & e_\ell[\hat X]\left( \frac{x_i(1-x_i)}{x_i-x_j}+\frac{x_j(1-x_j)}{x_j-x_i}\right) \\
& + e_{\ell-1}[\hat X] \left( \frac{x_ix_j(1-x_i)}{x_i-x_j}+\frac{x_j x_i(1- x_j)}{x_j-x_i}\right)\\
= &(1 - x_i - x_j) e_\ell[\hat X] - x_i x_j e_{\ell-1}[\hat X]\\
= &e_\ell[\hat X] + e_{\ell+1}[\hat X] - e_{\ell+1}[X],
\end{align*}
where the last equation holds by $e_{\ell+1}[X]=e_{\ell+1}[\hat X] +e_\ell[\hat X]e_1[x_i+x_j]+e_{\ell-1}[\hat X]e_2[x_i+x_j].$
\end{proof}

\subsection{The transformation of $h_n(t)$}\label{ss-transform}
For $k_1,k_2,k_3\in \NN$, we denote by
\begin{equation}\label{e-def-Phi}
\Phi_n(\ell, t,k_1,k_2,k_3) := \frac{e_\ell[Y-y_n]\prod_{i=1}^{n} x_i^{k_1} \prod_{i=1}^{n-1} (x_i-x_n)}{x_n^t \prod_{i=1}^n (1-x_i)^{k_2} \prod_{1\le i<j \le n} (x_i-x_j)^{k_3}},
\end{equation}
where $y_i = 1-x_i$ are treated as variables, and $Y := y_1 + y_2 + \cdots + y_n$.
Define
\begin{equation}\label{e-def-Dn-1}
D_n(\ell,t,k_1,k_2,k_3) := \res_{\x} \Phi_n(\ell, t,k_1,k_2,k_3).
\end{equation}
By the definition of $\Phi_n(\ell, t, k_1, k_2, k_3)$, we have
\begin{enumerate}
  \item[(1)] $\Phi_n(\ell, t, k_1, k_2, k_3) = 0$ if $\ell < 0$ or $\ell > n-1$;

  \item[(2)] $ x_n^i \Phi_n(\ell, t, k_1, k_2, k_3) = \Phi_n(\ell, t-i, k_1, k_2, k_3)$ for any integer $i$.
\end{enumerate}

The following lemma transforms the computation of $h_n(t)$ into $D_n(0, t + n - 1, n-2 , 1, 2)$.
\begin{lem}\label{lem-transform}
For a positive integer $n$, let $h_n(t)$ and $D_n(\ell, t, k_1, k_2, k_3)$ be as in \eqref{e-CTH21} and \eqref{e-def-Dn-1}, respectively. Then we have
\[
h_n(t) = D_n(0, t + n - 1, n - 2 , 1, 2).
\]
\end{lem}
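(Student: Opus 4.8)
The plan is to evaluate the $n$-fold residue $D_n(0,t+n-1,n-2,1,2)=\res_{\x}\Phi_n(0,t+n-1,n-2,1,2)$ by extracting the variable $x_1$ first, i.e. the one variable absent from $\bar\H_n$. Writing \eqref{e-def-Phi} out with $\ell=0$, $k_1=n-2$, $k_2=1$, $k_3=2$ and $t\mapsto t+n-1$, and cancelling the numerator factor $(x_1-x_n)$ against one power of the Vandermonde square $(x_1-x_n)^2$, I would factor
\[
\Phi_n(0,t+n-1,n-2,1,2)=R\cdot g(x_1),\qquad g(x_1)=\frac{x_1^{\,n-2}}{(1-x_1)(x_1-x_n)\prod_{j=2}^{n-1}(x_1-x_j)^2},
\]
where $R=R(x_2,\dots,x_n)$ gathers all factors free of $x_1$. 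Since $\res_{\x}$ is the single coefficient extraction $[\x^{-\mathbf 1}]$, it factors as $\res_{x_2,\dots,x_n}\res_{x_1}$, and $\res_{x_1}\Phi_n=R\cdot\res_{x_1}g$; thus the whole computation collapses to one clean one-variable residue.

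The key step is the evaluation
\[
\res_{x_1}g(x_1)=\frac{1}{(1-x_n)\prod_{j=2}^{n-1}(1-x_j)^2},
\]
which I would obtain entirely from the tools of Section \ref{s-tool}. Using $\res_{x_1}g=\CT_{x_1}(x_1 g)$ and observing that $x_1 g$ is proper in $x_1$ (numerator degree $n-1$, denominator degree $2n-2$), equation \eqref{e-CTfx-proper} shows only the poles $u$ with $x_1/u<1$ contribute. Under the standing order $1>x_1>x_2>\cdots>x_n>0$ every pole $x_1=x_j$ ($2\le j\le n$) has $x_1/x_j>1$ and is discarded, leaving only the simple pole at $x_1=1$; by the simple-pole formula \eqref{e-u-A10-1},
\[
\res_{x_1}g=\CT_{x_1=1}(x_1 g)=\Big((1-x_1)\,x_1 g\Big)\Big|_{x_1=1}=\frac{1}{(1-x_n)\prod_{j=2}^{n-1}(1-x_j)^2}.
\]
Conceptually this residue at $x_1=1$ is exactly what restores the normalization $x_1=1$ used to pass from the homogeneous $\H^{(2,1^{n-2},0)}$ to $\bar\H_n$.

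Substituting this back, the residue promotes each factor $(1-x_j)$ with $2\le j\le n-1$ from the first to the third power and $(1-x_n)$ from the first to the second power, while the $x_n$-power settles to $x_n^{-t-1}$; comparing the result factor-by-factor with \eqref{e-barH21} yields
\[
\res_{x_1}\Phi_n(0,t+n-1,n-2,1,2)=\frac{\bar\H_n}{x_2 x_3\cdots x_n}.
\]
Finally, applying $\res_{x_i}F=\CT_{x_i}(x_i F)$ to each of the remaining variables gives
\[
D_n(0,t+n-1,n-2,1,2)=\res_{x_2,\dots,x_n}\frac{\bar\H_n}{x_2\cdots x_n}=\CT_{x_2,\dots,x_n}\bar\H_n=h_n(t),
\]
the last equality being \eqref{e-CTH21}. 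I expect the main obstacle to be the exponent bookkeeping in this last paragraph, namely tracking how the powers of the $(1-x_j)$ factors and of $x_n$ recombine after the $x_1$-residue so that they match \eqref{e-barH21} precisely; the residue computation itself is short once the single contributing pole has been identified.
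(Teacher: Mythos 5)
Your proposal is correct and follows essentially the same route as the paper: both extract the $x_1$-variable first, use properness in $x_1$ together with the pole-location rule \eqref{e-CTfx-proper} to see that only the simple pole at $x_1=1$ contributes, evaluate it by \eqref{e-u-A10-1}, and verify that the result is exactly $\bar\H_n$ (the paper does this after multiplying $\Phi_n$ by $\prod_{i=1}^n x_i$ to work with constant terms throughout, whereas you keep residues and divide by $x_2\cdots x_n$ at the end—a purely cosmetic difference). The exponent bookkeeping you flagged as the main obstacle does check out: $\res_{x_1}\Phi_n(0,t+n-1,n-2,1,2)=\bar\H_n/(x_2\cdots x_n)$ holds factor by factor.
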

\begin{proof}
By \eqref{e-def-Dn-1}, we have
\[
D_n(0, t + n - 1, n-2 , 1, 2) = \res_{\x} \Phi_{n}(0, t + n - 1, n-2 , 1, 2) = \CT_{\x} f,
\]
where
\[
f= \Phi_{n}(0, t + n - 1, n-2 , 1, 2) \prod_{i=1}^{n} x_i = \frac{\prod_{i=1}^{n-1} x_i^{n-1} \prod_{i=1}^{n-1} (x_i-x_n)}{x_n^{t} \prod_{i=1}^n (1-x_i) \prod_{1\le i<j \le n} (x_i-x_j)^{2}}.
\]
Since $f$ is proper in $x_1$, by Lemmas \ref{lem-contri} and \ref{lem-Ai0}, we have
\[
\CT_{x_1} f = \CT_{x_1 = 1} f = ((1-x_1) f) \big|_{x_1 = 1}.
\]
Then the lemma follows by checking that $((1-x_1) f) \big|_{x_1 = 1}$ is completely the same as \eqref{e-barH21}.
\end{proof}

\subsection{A recursion of $h_n(t)$}
We first derive a relation among $D_n(\ell,t,k_1,k_2,k_3)$ for fixed $k_1,k_2,k_3$.
Then taking $k_1 = n-2$, $k_2=1$ and $k_3=2$ to give a specific relation among $D_n(\ell, t, n-2 , 1, 2)$.
Furthermore, we derive a recursion of $D_n(0, t, n-2 , 1, 2)$ with respect to $t$ by the method of undetermined coefficients.
For simplicity, we assume $k_3$ is even, which is sufficient for this paper.

Let $\Phi_n(\ell, t, k_1, k_2, k_3)$ be as in \eqref{e-def-Phi}. Denote by $V := \Phi_n(0, t, k_1, k_2, k_3)$.
Then
\begin{equation}\label{e-def-Dn-2}
D_n(\ell,t,k_1,k_2,k_3) = \res_{\x} e_\ell[Y-y_n] V.
\end{equation}

\begin{prop}\label{prop-equation-ell}
Let $k_1,k_2,k_3\in \NN$, and $D_n(\ell,t,k_1,k_2,k_3)$ be as in \eqref{e-def-Dn-2}. Then we have
\begin{align*}
A_{\ell+1}  D_n(\ell+1, t, k_1, k_2, k_3)+ A_{\ell}D_n(\ell, t, k_1, k_2, k_3) &+  \bar A_{\ell} D_n(\ell, t - 1, k_1, k_2, k_3) \\
+ A_{\ell-1}D_n(\ell-1, t, k_1, k_2, k_3) &+ \bar A_{\ell-1} D_n(\ell-1, t-1, k_1, k_2, k_3) = 0,
\end{align*}
where
\begin{align*}
A_{\ell+1} &= (\ell+1)\Big(k_2 - k_1 - 3 +\frac{k_3}2 (2n-2-\ell)\Big), \\
A_{\ell} & = t - n + (\ell+1) \Big(k_2-k_1-2 + \frac{k_3}{2}(2n -2 - \ell)\Big) - (n-\ell)\Big(k_2 - 2 + \frac{k_3}{2}(n-1-\ell)\Big), \\
\bar A_{\ell} & = -\Big(t - n + 1 + (\ell+1)\Big(k_2-k_1-2 + \frac{k_3}{2}(2n -2 - \ell)\Big)\Big),  \\
A_{\ell-1} &= - \bar A_{\ell-1} = - (n-\ell) \Big(k_2 - 1 + \frac{k_3}{2} (n -1-\ell) \Big).
\end{align*}
\end{prop}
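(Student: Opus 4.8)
The plan is to exploit the identity $\res_{x_i}\partial_{x_i}F=0$, valid for any rational function $F$ (a fact announced at the start of Section~\ref{s-recursion}). I would apply this with a judiciously chosen $F$ whose $x_n$-derivative, after being multiplied out, reassembles into a $\NN$-linear combination of the integrands $e_\ell[Y-y_n]\,V$ defining the various $D_n(\ell,t,k_1,k_2,k_3)$. The natural candidate is $F=x_n\,e_\ell[Y-y_n]\,V$, or a close variant such as $e_\ell[Y-y_n]\,V$ itself, since the shifts in $t$ (from $t$ to $t-1$) appearing in the statement are precisely the effect of an extra factor of $x_n$ in the numerator, recorded in property~(2) listed just after \eqref{e-def-Dn-1}. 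Writing $V=\Phi_n(0,t,k_1,k_2,k_3)$, the key is that $\partial_{x_n}$ acting on the product defining $V$ produces, by the logarithmic derivative, a sum of terms $\bigl(\tfrac{k_1}{x_n}-\tfrac{k_2}{1-x_n}\cdot(-1)+\cdots\bigr)V$ coming from $x_n^{k_1}$, from $(1-x_n)^{k_2}$, from each factor $(x_i-x_n)$ in $\prod_{i=1}^{n-1}(x_i-x_n)$, from $x_n^{-t}$, and from the Vandermonde factors $(x_i-x_n)^{k_3}$ in $\prod_{i<j}(x_i-x_j)^{k_3}$.

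First I would compute $\partial_{x_n}\!\log V$ explicitly, collecting the contributions of the six factor-types. Each contribution is a rational function in $x_n$ and the $x_i$; after clearing the common denominator $x_n(1-x_n)\prod_{i<n}(x_i-x_n)$ the numerator becomes a polynomial combination that I must re-express in terms of the elementary symmetric functions $e_\ell[Y-y_n]$, $e_{\ell\pm1}[Y-y_n]$ and $e_\ell[Y]$. This is exactly where Lemma~\ref{lem-eFormula} and Lemma~\ref{lem-sumFij} enter: the sums over $i$ of $\tfrac{x_i(1-x_i)}{x_i-x_n}e_\ell[X-x_i]$ and of $e_\ell[X-x_i]$ are precisely the quantities controlled by \eqref{e-sumel}--\eqref{e-sumelj} and by the symmetrization $F(x_i,x_j)+F(x_j,x_i)$, once one symmetrizes the pairwise sum over the Vandermonde factors. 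I would also use \eqref{e-hasxi} to move between $e_\ell[Y-y_n]$ and $e_\ell[Y-y_n-y_i]$ as needed. Setting $\res_{\x}\partial_{x_n}F=0$ then yields a relation in which each monomial $e_m[Y-y_n]V$ or $e_m[Y-y_n]x_n^{-1}V$ integrates, by \eqref{e-def-Dn-2} together with property~(2), to one of $D_n(m,t,\cdots)$ or $D_n(m,t-1,\cdots)$; matching the five surviving terms against $\ell+1,\ell,\ell,\ell-1,\ell-1$ with the $t$ versus $t-1$ split produces the five coefficients $A_{\ell+1},A_\ell,\bar A_\ell,A_{\ell-1},\bar A_{\ell-1}$.

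The main obstacle I anticipate is the bookkeeping in the symmetrization step: the Vandermonde factor contributes $\tfrac{k_3}{2}$ times a sum over the pairs $(i,n)$ which must be symmetrized correctly, and the resulting index shifts (between $e_\ell$, $e_{\ell+1}$, and $e_{\ell-1}$, and between the arguments $Y-y_n$ and $Y-y_n-y_i$) are easy to misalign, particularly the factors $(n-\ell)$, $(\ell+1)$, and $(n-1-\ell)$ that ultimately appear in the $A$'s. I would organize the calculation by grouping the six derivative-contributions into those that preserve the $e_\ell$-degree (giving $A_\ell$ and $\bar A_\ell$), those that raise it (giving $A_{\ell+1}$), and those that lower it (giving $A_{\ell-1}=-\bar A_{\ell-1}$), verifying the identity $A_{\ell-1}=-\bar A_{\ell-1}$ as an internal consistency check that the $x_n$- and $x_n^{-1}$-weighted degree-lowering terms are genuinely equal and opposite. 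Once the coefficients are collected in this grouped form, the final assembly into the stated five-term relation is routine; the restriction to even $k_3$ (noted before the proposition) keeps all half-integer weights $\tfrac{k_3}{2}$ integral and avoids branch-cut subtleties in the Laurent expansions.
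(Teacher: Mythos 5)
Your overall strategy (use $\res\partial=0$, reduce via Lemmas \ref{lem-eFormula} and \ref{lem-sumFij}, recover the $t\mapsto t-1$ shifts through property (2) after \eqref{e-def-Dn-1}) is the right family of argument, but your specific choice of $F$ leaves a genuine gap: you differentiate only with respect to $x_n$, taking $F=x_n\,e_\ell[Y-y_n]\,V$ or $e_\ell[Y-y_n]\,V$. The $x_n$-logarithmic derivative of $V$ produces cross terms of the form $\sum_{j=1}^{n-1}\frac{x_n(1-x_n)}{x_j-x_n}\,e_\ell[Y-y_n]\,V$ (up to your choice of multiplier), and these cannot be re-expressed in terms of the integrands $e_m[Y-y_n]V$ and $x_n\,e_m[Y-y_n]V$: the denominators $x_j-x_n$ survive, so their residues are \emph{new} unknown quantities, not linear combinations of the $D_n(m,t',k_1,k_2,k_3)$. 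Lemma \ref{lem-sumFij} --- the tool that kills exactly such denominators --- requires \emph{both} $F(y_n,y_j)$ and its partner $F(y_j,y_n)$, and the partner carries the factor $e_\ell[Y-y_j]$, which can only arise from differentiating with respect to $x_j$ for $j<n$. Likewise, the sums $\sum_i y_i e_\ell[Y-y_i]$ and $\sum_i e_\ell[Y-y_i]$ that feed \eqref{e-sumel}--\eqref{e-sumelj} cannot come out of a single $\partial_{x_n}$. So the single-variable identity does not close up in the $D_n$'s, and the re-expression step you describe ("the numerator becomes a polynomial combination that I must re-express in terms of the elementary symmetric functions") is impossible as stated.

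The missing idea, which is what the paper's proof does, is symmetrization over the differentiation variable: apply $\res_{x_i}\partial_{x_i}=0$ to the \emph{index-matched} function $e_\ell[Y-y_i]\,x_i(1-x_i)\,V$ for every $i\in\{1,\dots,n\}$ (note the multiplier $x_i(1-x_i)$ rather than $x_n$, chosen so the log-derivative terms coming from $x_i^{k_1}$ and $(1-x_i)^{k_2}$ become polynomial), and then sum the $n$ resulting identities. In that sum the pairwise rational terms pair up precisely as $F(y_i,y_j)+F(y_j,y_i)$, Lemma \ref{lem-sumFij} removes all cross denominators, Lemma \ref{lem-eFormula} converts the remaining sums into multiples of $e_{\ell}[Y]$ and $e_{\ell\pm1}[Y]$, and finally \eqref{e-hasxi} in the form $e_{\ell+1}[Y]=e_{\ell+1}[Y-y_n]+y_n e_\ell[Y-y_n]$ isolates the $x_n$-weighted pieces that become the shifted terms $D_n(\cdot,t-1,\cdot)$. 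Your grouping plan and the internal check $A_{\ell-1}=-\bar A_{\ell-1}$ would then go through essentially as you describe, but without the sum over all $i$ the crucial cancellation never happens. (Incidentally, evenness of $k_3$ plays no role in the proof of this proposition; the factor $\tfrac{k_3}{2}$ simply arises from summing over unordered pairs.)
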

\begin{proof}
Keep in mind that $y_i = 1 - x_i$.
Since $ e_\ell[Y-y_i] x_i(1-x_i) V$ is a formal Laurent series in $x_i$ for any $1 \le i \le n$, we have $\res_{x_i} \partial_{x_i} e_\ell[Y-y_i] x_i(1-x_i) V = 0$. Hence
\begin{equation*}
\res_{\x} \sum_{i=1}^{n} \partial_{x_i} e_\ell[Y-y_i] x_i(1-x_i) V = 0.
\end{equation*}
It can be computed that
\begin{align*}
& \partial_{x_i} e_\ell[Y-y_i] x_i (1 - x_i) V \\
  &= \begin{cases}
       e_\ell[Y-y_i] V \Big((k_1+1)(1 - x_i) + (k_2-1) x_i - \frac{x_i(1-x_i)}{x_n - x_i} + k_3\sum\limits_{j=1,j\neq i}^{n} \frac{x_i(1-x_i)}{x_j-x_i}   \Big), & \mbox{if } i < n; \\
       e_\ell[Y-y_n] V \Big((k_1+1-t) (1 - x_n) + (k_2-1) x_n + (k_3-1) \sum\limits_{j=1}^{n-1} \frac{x_n(1-x_n)}{x_j - x_n}  \Big), & \mbox{if } i=n.
     \end{cases}
\end{align*}
Now we take the sum over all $i$. The computation is naturally divided into three parts, denoted
$S_1$, $S_2$, $S_3$ respectively. In detail,
\begin{align*}
S_1 &= -t (1 - x_n) e_\ell[Y-y_n] + \sum_{i=1}^n (k_1+1)(1 - x_i) e_\ell[Y-y_i] \\
&= -t y_n e_\ell[Y-y_n] + \sum_{i=1}^n (k_1+1)y_i e_\ell[Y-y_i] & &(\text{by $y_i = 1-x_i$}) \\
&= - t y_n e_\ell[Y-y_n] + (k_1+1)(\ell+1)e_{\ell+1}[Y], & &(\text{by \eqref{e-sumxel}})\\
S_2 &= \sum_{i=1}^n (k_2-1) x_i e_\ell[Y-y_i] = \sum_{i=1}^n (k_2-1)(1-y_i)e_\ell[Y-y_i] & &(\text{by $y_i = 1-x_i$})\\
&= (k_2-1)\big((n-\ell) e_\ell[Y] - (\ell+1)e_{\ell+1}[Y]\big), & &(\text{by \eqref{e-sumel} and \eqref{e-sumxel}})
\end{align*}
and
\begin{align*}
S_3= & k_3 \sum_{i=1}^n \sum_{j=1,j\neq i}^n \frac{x_i(1-x_i)}{x_j-x_i} e_{\ell}[Y-y_i] - \sum_{i=1}^{n-1} \frac{x_i(1-x_i)}{x_n - x_i} e_{\ell}[Y-y_i] - \sum_{j=1}^{n-1} \frac{x_n(1-x_n)}{x_j - x_n} e_{\ell}[Y-y_n] \\
= &k_3 \sum_{i=1}^n \sum_{j\neq i} F(y_i, y_j) - \sum_{i=1}^{n-1} F(y_i, y_n) - \sum_{j=1}^{n-1} F(y_n, y_j) \hspace{2.5cm} (\text{by $y_i = 1-x_i$})\\
=&\frac{k_3}{2} \sum_{1\leq i\neq j \leq n} (F(y_i,y_j) + F(y_j,y_i)) - \sum_{i=1}^{n-1} (F(y_i,y_n) + F(y_n,y_i)) \\
=&\frac{k_3}2 \sum_{1\leq i\neq j \leq n} (e_\ell[Y-y_i-y_j] - e_{\ell+1}[Y] + e_{\ell+1}[Y-y_i-y_j]) \\
& -\sum_{i=1}^{n-1} (e_\ell[Y-y_i-y_n] - e_{\ell+1}[Y] + e_{\ell+1}[Y-y_i-y_n])\hspace{1.9cm} (\text{by Lemma \ref{lem-sumFij}}) \\
=&\frac{k_3}2 \big((n-\ell)(n-1-\ell) e_\ell[Y] - (\ell+1)(2n-2-\ell) e_{\ell+1}[Y]\big) - (n-1-\ell) e_{\ell}[Y-y_n]\\
& + (n-1)e_{\ell+1}[Y] - (n-2-\ell)e_{\ell+1}[Y-y_n]. \hspace{3.4cm} (\text{by \eqref{e-sumel} and \eqref{e-sumelj}})
\end{align*}
By \eqref{e-hasxi}, we have $e_{\ell+1}[Y] = e_{\ell+1}[Y-y_n] + y_n e_{\ell}[Y-y_n]$, which yields
\[
- (S_1 + S_2 + S_3) = A_{\ell+1} e_{\ell+1}[Y - y_n] + (A_{\ell} + \bar A_{\ell} x_n) e_{\ell}[Y - y_n] +(A_{\ell-1} + \bar A_{\ell-1} x_n) e_{\ell-1}[Y - y_n].
\]
Then the lemma follows by $\res_{\x} (S_1 + S_2 + S_3) V  = 0$ and $x_n \Phi_n(\ell, t, k_1, k_2, k_3) = \Phi_n(\ell, t-1, k_1, k_2, k_3)$.
\end{proof}

For convenience, we denote by $d_n(\ell, t) := D_n(\ell, t, n - 2 , 1, 2)$.
Substituting $k_1 = n-2$, $k_2 = 1$, $k_3 = 2$ into Proposition \ref{prop-equation-ell} yields
\begin{equation}\label{e-Dn-recur}
\begin{aligned}
&(\ell+1) (n- 2 -\ell)d_n(\ell+1, t) + (t - (n - 2 \ell) ( n - 2 - \ell) - 1) d_n(\ell, t) \\
&- (t + \ell ( n - 2 - \ell))d_n(\ell, t-1) - (n - \ell) (n - 1 - \ell) d_n(\ell-1, t)  \\
&+ (n - \ell) (n - 1 - \ell)d_n(\ell-1, t - 1) = 0.
\end{aligned}
\end{equation}

The remaining work is to obtain a recursion of $d_n(0, t)$ with respect to $t$, which can be completed by the following steps:
\begin{enumerate}
  \item We obtain $n-1$ equations, say $E_0, E_1, \dots, E_{n-2}$, by substituting $\ell = 0, 1, \dots, n-2$ respectively into \eqref{e-Dn-recur}. For any $\ell \le -1$ and $\ell \geq n-1$, \eqref{e-Dn-recur} becomes $0 = 0$. The only nontrivial case is when $\ell = n-1$,
      for which we can verify $d_n(n-1, t)=0$ as follows:
      \begin{align*}
      d_n(n-1, t) &= \res_{\x} \Phi_n(n-1, t, n-2, 1, 2) \\
      &= \CT_{\x} \frac{\prod_{i=1}^{n} x_i^{n-1} \cdot \prod_{i=1}^{n-1} (x_i - x_n)}{x_n^{t} (1 - x_n) \prod_{1\le i < j \le n} (x_i-x_j)^2} \\
      &= \CT_{\x} \frac{\prod_{i=2}^{n} x_i^{n-1} \cdot \prod_{i=2}^{n-1} (x_i - x_n)}{x_n^{t} (1 - x_n) \prod_{2\le i < j \le n} (x_i-x_j)^2} \CT_{x_1} \frac{x_1^{-n+2}}{(1-x_n/x_1)\prod_{i=2}^{n-1} (1-x_i/x_1)^2}.
      \end{align*}
      Since $n \ge 3$, the rightmost part only contains negative powers in $x_1$, which yields $d_n(n-1, t) = 0$.

  \item For each $\ell \in \{0, 1, \dots, n-2\}$, we substitute $t=t-i$ for $i = 0, 1, \dots, n - 2 - \ell$ respectively into $E_{\ell}$ to obtain $\binom{n}{2}$ equations:
      \[
      E_{0,0}, E_{0,1}, \dots, E_{0, n-2}, E_{1, 0}, E_{1, 1}, \dots, E_{1,n-3}, \dots, E_{n-2, 0},
      \]
      where $E_{\ell, i} = E_{\ell} |_{t=t-i}$.

  \item We obtain a recursion of $d_n(0, t)$ with respect to $t$ using the method of undetermined coefficients. That is, we consider
the equation
      \begin{equation}\label{e-undeter}
      \sum_{\ell = 0}^{n-2} \sum_{i=0}^{n-2-\ell} a_{\ell, i} E_{\ell, i}
      \end{equation}
with undetermined coefficients the $a_{\ell, i}$'s such that $d_n(\ell, t-i)$ disappears for all $\ell > 0$.
This gives a linear system described as follows:
      \begin{equation}\label{e-hasequation}
      B (a_{0,0}, a_{0,1}, \dots, a_{0, n-2}, a_{1, 0}, a_{1, 1}, \dots, a_{1,n-3}, \dots, a_{n-3, 0}, a_{n-3, 1})^T = a_{n-2,0} \b,
      \end{equation}
      where $B$ is a $\frac{(n+1)(n-2)}{2} \times \frac{(n+1)(n-2)}{2}$ upper triangular matrix with $B_{i,i} = (\ell+1) (n- 2 -\ell)$ if $\sum_{j=0}^{\ell-1} (n-1-j) < i \le \sum_{j=0}^{\ell} (n-1-j)$.
      That is, all the diagonal entries of $B$ are not zero. Hence \eqref{e-hasequation} must have a solution. Substituting any particular solution (say, with respect to $a_{n-2,0}=1$),  into \eqref{e-undeter} yields an equation in $d_n(0, t), \dots, d_n(0,t-n+1)$.
\end{enumerate}

We compute the recursion of $d_4(0, t)$ in the following example to explain above steps.
\begin{ex}\label{ex-neq4}
Fix $n=4$. By \eqref{e-Dn-recur}, we have

\begin{align*}
&(\ell+1) (2 -\ell)d_4(\ell+1, t) + (t - 2(2 - \ell)^2 - 1) d_4(\ell, t) - (t + \ell ( 2 - \ell)) d_4(\ell, t-1) \\
& - (4 - \ell) (3 - \ell)d_4(\ell-1, t) + (4 - \ell) (3 - \ell) d_4(\ell-1, t - 1) = 0.
\end{align*}

Substituting $\ell = 0, 1, 2$ into above equation respectively gives
\begin{align*}
2 d_4(1, t) + (t - 9) d_4(0, t) - t d_4(0, t-1) &= 0, & &(E_0)  \\
2 d_4(2, t) + (t - 3) d_4(1, t) - (t + 1) d_4(1, t-1) - 6 d_4(0, t) + 6 d_4(0, t - 1) &= 0, & &(E_1)\\
(t - 1) d_4(2, t) - t d_4(2, t - 1) - 2 d_4(1, t) + 2 d_4(1, t-1) &= 0. & &(E_2)
\end{align*}

For each $\ell \in \{0, 1, 2\}$, we list $E_{\ell} |_{t=t-i}$ for $i = 0, 1, \dots, 2 - \ell$ as follows:
\begin{align*}
2 d_4(1, t) + (t - 9) d_4(0, t) - t d_4(0, t-1) &= 0, & &(E_{0,0}) \\
2 d_4(1, t-1) + (t - 10) d_4(0, t-1) - (t-1) d_4(0, t-2) &= 0, & &(E_{0,1}) \\
2 d_4(1, t-2) + (t - 11) d_4(0, t-2) - (t-2) d_4(0, t-3) &= 0,  & &(E_{0,2})\\
2 d_4(2, t) + (t - 3) d_4(1, t) - (t + 1) d_4(1, t-1) - 6 d_4(0, t) + 6 d_4(0, t - 1) &= 0,  & &(E_{1,0}) \\
2 d_4(2, t-1) + (t - 4) d_4(1, t-1) - t d_4(1, t-2) - 6 d_4(0, t-1) + 6 d_4(0, t - 2) &= 0,  & &(E_{1,1}) \\
(t - 1) d_4(2, t) - t d_4(2, t - 1) - 2 d_4(1, t) + 2 d_4(1, t-1) &= 0.  & &(E_{2,0})
\end{align*}

The equation $\sum_{\ell = 0}^{2} \sum_{i=0}^{2-\ell} a_{\ell, i} E_{\ell, i}$ is equivalent to
\begin{equation}\label{e-detercoe}
\begin{aligned}
&(2 a_{0,0} + (t-3) a_{1,0} - 2 a_{2,0}) d_4(1, t)  \\
&+ (2 a_{0,1} - (t + 1) a_{1,0} + (t - 4) a_{1,1} + 2 a_{2,0} ) d_4(1, t-1) + (2 a_{0,2} - t a_{1,1} ) d_4(1, t-2) \\
& + (2 a_{1,0} + (t-1) a_{2,0}) d_4(2, t) + (2 a_{1,1} - t a_{2,0}) d_4(2, t-1) \\
&+((t - 9) a_{0,0} - 6 a_{1,0} ) d_4(0, t) + (- t a_{0,0} + (t - 10) a_{0,1} + 6 a_{1,0} - 6 a_{1,1})d_4(0, t-1) \\
&+(- (t-1) a_{0,1} + (t - 11) a_{0,2} + 6 a_{1,1}) d_4(0, t-2) - (t-2) a_{0,2} d_4(0, t-3) = 0.
\end{aligned}
\end{equation}
By letting the coefficients of $d_4(\ell, t-i)$ equal to $0$ for $\ell = 1,2$ and the corresponding $i = 0, 1, \dots, 2 - \ell$, we obtain the following linear system of equations:
\[
\left(\begin{array}{ccccc}
  2 & 0 & 0 & t-3 & 0  \\
  0 & 2 & 0 & -t-1 & t-4  \\
  0 & 0 & 2 & 0 & -t  \\
  0 & 0 & 0 & 2 & 0  \\
  0 & 0 & 0 & 0 & 2
\end{array}\right)
\left(\begin{array}{c}
        a_{0,0} \\
        a_{0,1} \\
        a_{0,2} \\
        a_{1,0} \\
        a_{1,1}
      \end{array}\right)
=a_{2,0} \left(\begin{array}{c}
        2  \\
        -2 \\
        0 \\
        1-t \\
        t
      \end{array}\right)
\]
The solution of this linear system is
\begin{align*}
a_{0,0} &= \frac{t^2 - 4t + 7}{4} a_{2,0}, &
a_{0,1} &= \frac{-2t^2 + 4 t - 3}{4} a_{2,0}, &
a_{0,2} &= \frac{t^2}{4} a_{2,0}, \\
a_{1,0} &= \frac{-t+1}{2} a_{2,0}, &
a_{1,1} &= \frac{t}{2} a_{2,0}.
\end{align*}
Taking $a_{2,0} = 4$ and substituting above solution into \eqref{e-detercoe} yields
\begin{align*}
(t-3) (t-5)^2 d_4(0, t) &+ (-3 t^3 + 28 t^2 - 74 t + 42) d_4(0, t-1) \\
&+ (3 t^3 - 17 t^2 + 19 t - 3) d_4(0, t-2) - t^2 (t-2) d_4(0, t-3) = 0.
\end{align*}
Furthermore, by Lemma \ref{lem-transform}, we obtain
\begin{equation}\label{e-recur-4}
\begin{aligned}
t (t-2)^2 h_4(t) &+ (-3 t^3 + t^2 + 13 t - 9) h_4(t-1) \\
&+ (3 t^3 + 10 t^2 - 2 t - 18) h_4(t-2) - t (t+3)^2 h_4(t-3) = 0.
\end{aligned}
\end{equation}
It is easy to check that
\[
h_4(t) = \frac{(t-1) t^2 (t+1) (t+2) (t+3) (t+4)^2 (t+5)}{60480},
\]
which fits \eqref{e-recur-4}.
\end{ex}

\begin{rem}
A formula of $D_n(\ell, t, k_1, k_2, k_3)$ for $\ell>0$ can be obtained, but is not needed here.
\end{rem}

For general $n$, the recursion of $d_n(0, t)$ can be described as $\sum_{i=0}^{n-1} c_i(t) d_n(0, t-i) = 0$, where $c_i(t)$ is a polynomial in $t$ for each $i$. By Lemma \ref{lem-transform}, we have
\begin{equation}\label{e-hrecur}
 h_n(t) = - \frac{1}{c_0(t+n-1)}\sum_{i=1}^{n-1} c_i(t+n-1) h_n(t-i),
\end{equation}
which gives a recursion for $t> t_0$, where $t_0$ is the maximum integer root (if exists) of $c_0(t+n-1)$.

\begin{ex}\label{ex-h5}
Analogous to Example \ref{ex-neq4}, for $n=5$, we can obtain
\begin{equation}\label{e-h5-recur}
\begin{aligned}
& t (t - 4) (t - 3)^2 h_5(t) - (4 t^4 - 16 t^3 - 22 t^2 + 126 t - 80) h_5(t-1) \\
&+ (6 t^4 + 12 t^3 - 74 t^2 - 80 t + 208) h_5(t-2) \\
&- (4 t^4 + 32 t^3 + 50 t^2 - 106 t - 208) h_5(t-3)+ (t + 4)^2 (t + 5) (t + 1)  h_5(t-4) = 0.
\end{aligned}
\end{equation}
Then we compute the explicit formula of $h_5(t)$ by the method of undetermined coefficients.
By parts (1) and (3) of Theorem \ref{theo-main}, we set
\[
h_5(t) = \Big( \frac{300}{16!} t^4 + a_3 t^3 + a_2 t^2 + a_1 t + a_0 \Big) \prod_{i=-8}^{3} (t-i).
\]
The initial value is $h_5(4) = 9$, which is given by part (4) of Theorem \ref{theo-main}.
Substituting $t=5$ into \eqref{e-h5-recur} yields $ 20 h_5(5) - 500 h_5(4) = 0$ since $h_5(1) = h_5(2) = h_5(3) = 0$. Thus $h_5(5) = 225$.
By part (2) of Theorem \ref{theo-main}, we have $h_5(-9) = h_5(4) = 9$ and $h_5(-10) = h_5(5) = 225$.
Solving
\[
\left\{
\begin{aligned}
  h_5(4) &= 479001600 (a_0 + 4 a_1 + 16 a_2 + 64 a_3) + \frac{160}{91} = 9, \\
  h_5(-9) &= 479001600 (a_0 - 9 a_1 + 81 a_2 - 729 a_3) + \frac{32805}{728} = 9, \\
  h_5(5) &= 6227020800 (a_0 + 5 a_1 + 25 a_2 + 125 a_3) + \frac{3125}{56} = 225, \\
  h_5(-10) &= 6227020800 (a_0 - 10 a_1 + 100 a_2 - 1000 a_3) + \frac{6250}{7} = 225
\end{aligned}\right.
\]
gives
\[
a_0 = 0, \quad a_1 = \frac{1}{34871316480}, \quad a_2 = \frac{127}{348713164800}, \quad a_3 = \frac{1}{6974263296}.
\]
Therefore,
\[
h_5(t) = \frac{t (t + 5) (5 t^2 + 25 t + 2)}{348713164800} \prod_{i=-8}^{3} (t-i).
\]

\end{ex}

Computer experiments suggest us to conjecture that $t_0 = \big\lfloor \frac{(n-1)^2}{4} \big \rfloor$. For instance when $n=5$, $t_0=4$.
To verify this conjecture, we need a formula of $c_0(t)$. This can be obtained by consider a subsystem of \eqref{e-undeter}. Observe that $d_n(\ell,t)$ only appears in $E_{\ell-1,0}$, $E_{\ell,0}$ and $E_{\ell+1,0}$. Thus we only need to consider $\sum_{\ell = 0}^{n-2}  a_{\ell, 0} E_{\ell, 0}$. The coefficient of $d_n(\ell,t)$ in $\sum_{\ell = 0}^{n-2}  a_{\ell, 0} E_{\ell, 0}$ is
\[
\ell (n - 1 - \ell) a_{\ell-1,0} + (t - (n - 2 \ell) (n - 2 - \ell) - 1) a_{\ell,0} - (n - 1 - \ell) (n - 2 - \ell) a_{\ell+1,0}
\]
for each $0 \le \ell \le n-2$.
In particular, the coefficient of $d_n(0,t)$ is
\[
c_0(t) = (t - (n-1)^2) a_{0,0} - (n-1)(n-2) a_{1,0}.
\]

We obtain
\[
B \left(\begin{array}{c}
    a_{0,0} \\
    a_{1,0} \\
    \vdots \\
    a_{n-3,0}
  \end{array}\right) = - a_{n-2,0} \b
\]
by setting the coefficients of $d_n(\ell, t)$ equal to $0$ for any $\ell > 0$, where $\b = (0, \dots, 0, -2,  t-1)^T$, and $B$ is a $(n-2) \times (n-2)$ upper triangular matrix with
\[
B_{i,j} = \begin{cases}
            i (n-1-i), & \mbox{if } j=i; \\
            t - (n-2i) (n-2-i) - 1, & \mbox{if } j=i+1; \\
            - (n-2 - i) (n-3-i), & \mbox{if } j=i+2; \\
            0, & \mbox{otherwise}.
          \end{cases}
\]
Clearly $\det B$ is a constant.
Applying Cramer's Rule and the rules of column transformation, we have
\begin{align*}
a_{0,0} &= \frac{(-1)^{n} a_{n-2,0} \det (\beta_2, B', \b)}{\det B},  \quad
a_{1,0} = \frac{(-1)^{n-1} a_{n-2,0} \det (\beta_1, B', \b)}{\det B},
\end{align*}
where $\beta_i$ denotes the $i$-th column of $B$ for $i=1, 2$, and $B'$ is obtained from $B$ by removing the first two columns. Consequently,
\begin{align}\label{e-inival}
c_0(t) &= \frac{(-1)^n a_{n-2, 0}}{\det B} \big( (t - (n-1)^2) \det (\beta_2, B', \b) + (n-1)(n-2) \det (\beta_1, B', \b) \big) \notag \\
&= \frac{(-1)^n a_{n-2, 0}}{\det B} \det C,
\end{align}
where
\begin{equation}\label{e-det}
C = \left(\begin{array}{cccc}
                t - (n-1)^2 & -(n-1)(n-2) & \mathbf{0} & 0 \\
                \beta_1 & \beta_2 & B' & \b
              \end{array}\right).
\end{equation}
This is a $(n-1) \times (n-1)$ matrix whose entries are given by:
\[
C_{i,j} = \begin{cases}
            t - (n-2i + 2) (n-1-i) - 1, & \mbox{if } i = j; \\
            (i-1) (n-i), & \mbox{if } i = j + 1; \\
            - (n-1-i) (n-i), & \mbox{if } i = j - 1; \\
            0, & \mbox{otherwise}.
          \end{cases}
\]
Since $\det C$ is just the characteristic polynomial of the $(n-1) \times (n-1)$ matrix $-C|_{t=0}$, it is a polynomial in $t$ of degree $n-1$.
It is not hard to compute the determinant for given $n$. This leads to the following conjecture.
\begin{conj}\label{conj-detC}
Let $C$ be the matrix in \eqref{e-det}. Then
\[
\det C = \begin{cases}
           (t - n + 1) \prod_{i=0}^{r-2} (t - n + 1 - r(r-1) + i (i+1))^2, & \mbox{if } n = 2 r; \\
           (t - n + 1) (t - n + 1 - (r-1)^2) \prod_{i=1}^{r-2} (t - n + 1 - (r - 1)^2 + i^2)^2, & \mbox{if } n = 2 r - 1.
         \end{cases}
\]
\end{conj}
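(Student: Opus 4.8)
The plan is to recognize $\det C$ as a characteristic polynomial and then pin down all of its roots together with their multiplicities by a null-vector computation. Since the diagonal entries of $C$ are linear in $t$ with coefficient $1$ and the off-diagonal entries are constant in $t$, I would write $C = tI - A$, where $A$ is the fixed tridiagonal matrix with $A_{i,i} = (n-2i+2)(n-1-i)+1$, $A_{i,i+1} = (n-1-i)(n-i)$, and $A_{i,i-1} = -(i-1)(n-i)$. Thus $\det C = \det(tI-A)$ is the characteristic polynomial of $A$, hence monic of degree $n-1$; the product on the right of Conjecture \ref{conj-detC} is also monic of degree $n-1$. So it suffices to show that $A$ has exactly the eigenvalues $t_0 = (n-1) + j(n-1-j)$ with the asserted multiplicities. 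In the shifted variable $s = t-(n-1)$ the claimed roots are $s = j(n-1-j)$ for $j = 0,1,\dots,\lfloor (n-1)/2\rfloor$; these values are strictly increasing in $j$, hence distinct, and a direct count of total multiplicity ($1$ for $j=0$, $2$ for each interior $j$, and $1$ for the central $j=(n-1)/2$ when $n$ is odd) gives exactly $n-1$, matching the degree. A key structural fact I would use at the outset is that $A$ is an \emph{irreducible} tridiagonal matrix: all its sub- and super-diagonal entries are nonzero (the factors $(i-1),(n-i),(n-1-i)$ are positive in the relevant ranges), so every eigenvalue has geometric multiplicity exactly $1$. Consequently the interior eigenvalues, which the conjecture forces to have algebraic multiplicity $2$, must be \emph{defective}, and I plan to detect this defectiveness analytically.

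The criterion I would use is that a root $t_0$ of $\det C$ has multiplicity at least $2$ precisely when $\det C(t_0) = 0$ and $\frac{d}{dt}\det C(t_0)=0$. By Jacobi's formula $\frac{d}{dt}\det(tI-A) = \mathrm{tr}\,\mathrm{adj}(tI-A)$, and at an eigenvalue with one-dimensional kernel the adjugate is rank one, $\mathrm{adj}(t_0 I - A) = c\, v\,w^{T}$ with $c\neq 0$, where $v$ spans the right kernel ($A v = t_0 v$) and $w$ spans the left kernel ($A^{T} w = t_0 w$). Hence $\frac{d}{dt}\det C(t_0) = c\,\langle w, v\rangle$, so multiplicity $2$ at an interior eigenvalue is equivalent to the orthogonality $\langle w, v\rangle = 0$, while a boundary or central eigenvalue is simple exactly when $\langle w, v\rangle \neq 0$. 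Accordingly, I would first construct, for each candidate $t_0 = (n-1)+j(n-1-j)$, explicit right and left null vectors. The eigenvector equation is the second-order difference equation
\[
-(i-1)(n-i)\,v_{i-1} + \big[(n-2i+2)(n-1-i)+1\big] v_i + (n-1-i)(n-i)\,v_{i+1} = t_0\, v_i ,
\]
whose boundary coefficients vanish automatically (the $v_{i-1}$-coefficient is $0$ at $i=1$ and the $v_{i+1}$-coefficient is $0$ at $i=n-1$), so $v$ is determined up to scale by the recursion and the eigenvalue condition is exactly the closure at $i=n-1$. For $t_0 = (n-1)+j(n-1-j)$ I expect the solution to terminate as a closed-form (binomial/hypergeometric) function of the index $i$ depending on $j$, and similarly for the left null vector $w$ coming from the transpose equation.

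With these closed forms in hand, the decisive step is to evaluate the overlap $\langle w^{(j)}, v^{(j)}\rangle = \sum_{i} w^{(j)}_i v^{(j)}_i$ and to prove that it vanishes for every interior $j$ (forcing algebraic multiplicity $2$) and is nonzero for $j=0$ and, when $n$ is odd, for the central $j=(n-1)/2$ (forcing simplicity). Once this is established, the multiplicity count above already sums to $n-1$, so $\det C$ and the conjectured product are two monic polynomials of degree $n-1$ with the same roots and multiplicities, hence equal, treating the even case $n=2r$ and the odd case $n=2r-1$ separately only to match the explicit factor lists. The \textbf{main obstacle} I anticipate is precisely this overlap computation: producing manageable closed forms for the left and right null vectors, and then proving the vanishing of $\langle w^{(j)}, v^{(j)}\rangle$, which should reduce to a single terminating hypergeometric summation identity (the Jordan/defectiveness condition). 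As a computational backbone and a sanity check at each stage I would keep the three-term recurrence for the leading principal minors,
\[
D_m = \big(t - (n-2m+2)(n-1-m) - 1\big)\,D_{m-1} + (m-1)(n-m)^2(n-m+1)\,D_{m-2},
\]
with $D_0 = 1$ and $D_{-1}=0$, which computes $\det C = D_{n-1}$ directly; note, however, that the intermediate minors $D_m$ do \emph{not} factor nicely (already $D_1 = t - (n-1)n + \cdots$ has a root outside the target set), so a naive induction on $m$ will not suffice and the null-vector route above is the one I would pursue.
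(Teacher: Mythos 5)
First, a point of calibration: the paper does not prove this statement at all---it is left as Conjecture \ref{conj-detC}, supported only by computing $\det C$ directly for individual values of $n$ (such as $n=14$ and $n=15$). So there is no proof of record to measure your attempt against; it must stand entirely on its own, and it does not yet do so. The parts you have written down are correct and useful: the decomposition $C = tI - A$ with $A$ the constant tridiagonal matrix you describe; the observation that $\det C$ and the conjectured product are both monic of degree $n-1$; the clean uniform reindexing of the conjectured roots as $t_j = (n-1) + j(n-1-j)$ for $0 \le j \le \lfloor (n-1)/2 \rfloor$ (this does match both displayed cases, via $i = r-1-j$); the count showing the claimed multiplicities sum to $n-1$; the fact that $A$ is irreducible tridiagonal, so every eigenvalue has geometric multiplicity one; and the adjugate/Jacobi criterion identifying algebraic multiplicity at least two with the orthogonality $\langle w, v\rangle = 0$ of the left and right null vectors. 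Granting all of this, the conjecture is equivalent to: (a) every $t_j$ is actually an eigenvalue of $A$, and (b) the overlap vanishes at every interior $j$; the simple roots need only be roots, since the multiplicity count then already saturates the degree.

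The genuine gap is that neither (a) nor (b) is proved, nor even seriously begun. You never solve the three-term recurrence, so you never check that any candidate $t_j$ satisfies the closure condition at $i = n-1$; no formula for $v^{(j)}$ or $w^{(j)}$ is produced; and the key identity $\sum_i w^{(j)}_i v^{(j)}_i = 0$, which you yourself identify as the entire content of the double-root claim, is only predicted to ``reduce to a single terminating hypergeometric summation identity'' that is never written down, let alone proved. In effect the proposal trades one conjectural polynomial identity for an equivalent family of conjectural summation identities and defers exactly the hard part---your own final paragraph concedes this. It is a reasonable, probably the natural, plan of attack, but as it stands it is a strategy rather than a proof: to close it you would need explicit closed forms for the null vectors (verified termwise against the recurrence) and an evaluation of the overlap sum, say by Chu--Vandermonde or Pfaff--Saalsch\"utz, showing it vanishes precisely at the interior values of $j$.
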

For example,
\[
\det C = \begin{cases}
           (t - 13) (t - 55)^2 (t - 53)^2 (t - 49)^2 (t - 43)^2 (t - 35)^2 (t - 25)^2, & \mbox{if } n = 14, \\
          (t - 14) (t - 63) (t - 62)^2 (t - 59)^2 (t - 54)^2 (t - 47)^2 (t - 38)^2 (t - 27)^2, & \mbox{if } n = 15.
         \end{cases}
\]

If Conjecture \ref{conj-detC} holds, then
\[
t_0 = \Big\lfloor \frac{(n-1)^2}{4} \Big\rfloor = \begin{cases}
                                                    r(r-1), & \mbox{if } n = 2 r; \\
                                                    (r-1)^2, & \mbox{if } n = 2 r - 1.
                                                  \end{cases}
\]
It follows that parts (3) and (4) of Theorem \ref{theo-main} gives initial values of $h_n(t)$ for the recursion \eqref{e-hrecur}.

\section{A conjecture about the generating function of $h_n(t)$}\label{s-conj}

Theorem \ref{theo-Stanley} suggests us to consider properties of the generating function of $h_n(t)$. It is necessary to clarify some definitions. 
A real number sequence $(a_0, a_1, \dots, a_m)$ is said to be \emph{log-concave} (short for \emph{logarithmically concave}) if $a_i^2 \ge a_{i-1} a_{i+1}$ for all $1 \le i \le m-1$.
Let $b_i = \frac{1}{\binom{m}{i}} a_i$ for all $i$. Then the sequence $(a_0, a_1, \dots, a_m)$ is said to be \emph{strongly log-concave} if $b_i^2 \ge b_{i-1} b_{i+1}$ for all $1 \le i \le m-1$.
See \cite{book-Stanley-AlgComb} for further details.

We raise the following conjecture.

\begin{conj}\label{conj-h*}
Let $N=\big\lfloor \frac{(n-1)^2}{4} \big\rfloor$ and $\hat N = \big\lfloor \frac{(n-2)^2}{2} \big\rfloor$. Then
\[
h^*_n (y) :=  (1 - y)^{(n-1)^2+1} \sum_{t \ge 0} h_n(t) y^t = y^{N} \sum_{i=0}^{\hat N} a_i y^i,
\]
where the $a_i$'s are positive integers.
The polynomial $h^*_n (y)$ is real-rooted, and $(a_0, a_1, \dots, a_{\hat N})$ is palindromic, unimodal and strongly log-concave.
\end{conj}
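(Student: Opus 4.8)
The plan is to split Conjecture \ref{conj-h*} into its structural part — the shape $y^{N}\sum_{i=0}^{\hat N}a_iy^i$ together with palindromy — which I can read off from the results already proved, and its analytic part — integrality, positivity and real-rootedness — with unimodality and strong log-concavity deduced at the very end. First I would pin down the shape. Write $d=(n-1)^2$ and $\sum_{t\ge0}h_n(t)y^t=P(y)/(1-y)^{d+1}$ with $P(y)=\sum_{i=0}^{d}h^*_iy^i=h^*_n(y)$; here $\deg P\le d$ since $\deg h_n=d$ by Theorem \ref{theo-main}(1), and the coefficients are the binomial transform $h^*_i=\sum_{j=0}^{i}(-1)^{i-j}\binom{d+1}{i-j}h_n(j)$. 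By Lemma \ref{lem-part3-1} we have $h_n(0)=\cdots=h_n(N-1)=0$, so $h^*_i=0$ for $0\le i\le N-1$, i.e. $y^{N}\mid P(y)$, and $a_0=h^*_N=h_n(N)$, which is positive (in fact a perfect square) by Theorem \ref{theo-main}(4).

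To locate the top degree I would combine the symmetry $h_n(t)=(-1)^{n-1}h_n(-t-n)$ of Theorem \ref{theo-main}(2) with the negative vanishing $h_n(-1)=\cdots=h_n(-n+1)=0$ of Lemma \ref{lem-part3-2}. On one hand, the rational-function form of Ehrhart--Macdonald reciprocity gives $\sum_{t\ge1}h_n(-t)y^t=(-1)^{d}\big(\sum_i h^*_i y^{d+1-i}\big)/(1-y)^{d+1}$; on the other hand, substituting $h_n(-t)=(-1)^{n-1}h_n(t-n)$ and discarding the vanishing terms $h_n(-1),\dots,h_n(-n+1)$ yields $\sum_{t\ge1}h_n(-t)y^t=(-1)^{n-1}y^{n}P(y)/(1-y)^{d+1}$. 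Since $(-1)^{d}=(-1)^{n-1}$, comparing numerators gives the palindromy $h^*_i=h^*_{(n-1)(n-2)-i}$. Combined with $h^*_i=0$ for $i<N$, this confines the support of $P$ to $[N,(n-1)(n-2)-N]$, and a short parity computation checks $(n-1)(n-2)-2N=\hat N$. Thus $h^*_n(y)=y^{N}\sum_{i=0}^{\hat N}a_iy^i$ with $a_i=a_{\hat N-i}$ and $a_0=a_{\hat N}=h_n(N)>0$, which already establishes the shape and palindromy.

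Next I would treat integrality and positivity of the $a_i$. Integrality reduces to showing $h_n(t)\in\ZZ$ for every integer $t$, after which the binomial-transform formula makes each $h^*_i$ an integer; this I would derive from the constant-term definition \eqref{e-CTH21} and \eqref{e-barH21}, where for integer $t\ge0$ the number $h_n(t)$ is the signed contribution of the weak composition $\m=(2,1^{n-2},0)$ to the integer count $H_n(t)$, hence integer valued, and an integer-valued polynomial on $t\ge0$ is integer valued on all of $\ZZ$. For positivity I would try to realize $(a_i)$ as the $h^*$-vector of a genuine lattice polytope attached to the flow/transportation data of $\m=(2,1^{n-2},0)$ and then invoke Stanley's nonnegativity theorem for $h^*$-vectors (the same mechanism underlying Theorem \ref{theo-Stanley}), upgrading $a_i\ge0$ to strict positivity on the whole support via $a_0=a_{\hat N}>0$ and palindromy.

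The hard part will be \emph{real-rootedness}, and it controls what remains: a polynomial with nonnegative coefficients and only real zeros automatically satisfies Newton's inequalities, which are precisely the strong log-concavity $b_i^2\ge b_{i-1}b_{i+1}$ with $b_i=a_i/\binom{\hat N}{i}$; this gives ordinary log-concavity $a_i^2\ge a_{i-1}a_{i+1}$, and with strict positivity (hence no internal zeros) this yields unimodality. So once positivity and real-rootedness are secured, every remaining claim follows. To prove real-rootedness I would pursue one of two routes. The first is to build an interlacing (Sturm-type) family: the recursion \eqref{e-hrecur} in $t$, equivalently the $D_n$-relations of Proposition \ref{prop-equation-ell}, should translate into a linear recurrence expressing $h^*_n(y)$ through lower data by a root-interlacing operator, whose interlacing hypotheses one then verifies uniformly in $n$. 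The second, more geometric route is to show the associated polytope is Gorenstein — the palindromy of $(a_i)$ being the combinatorial shadow of this — and admits a regular unimodular triangulation, so that known real-rootedness results for $h^*$-polynomials in that setting apply. I expect the interlacing step, namely producing an explicit operator or a family of mutually interlacing polynomials and checking the interlacing conditions for all $n$, to be the genuine obstacle; by contrast the palindromy and positivity should be reachable with the machinery already developed in the paper.
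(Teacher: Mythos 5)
First, note that the paper does not prove this statement at all: it is Conjecture \ref{conj-h*}, supported only by computer verification for $3 \le n \le 29$, so there is no proof of record to compare yours against. Judged on its own terms, your proposal splits into a part that genuinely works and a part that is still only a plan. The structural half is correct and complete: writing $d=(n-1)^2$, the identity $\sum_{t\ge 1}h_n(-t)y^t = (-1)^d\bigl(\sum_i h_i^* y^{d+1-i}\bigr)/(1-y)^{d+1}$ is a purely formal fact valid for any degree-$d$ polynomial (no Ehrhart--Macdonald reciprocity is needed; it follows by linearity from the basis $\binom{t+d}{d}$), and combining it with the symmetry $h_n(t)=(-1)^{n-1}h_n(-t-n)$ of Theorem \ref{theo-main}(2), the vanishing $h_n(0)=\cdots=h_n(N-1)=0$ of Lemma \ref{lem-part3-1}, and $h_n(-1)=\cdots=h_n(-n+1)=0$ of Lemma \ref{lem-part3-2} does yield $y^N \mid h_n^*(y)$, the palindromy $a_i=a_{\hat N-i}$, the degree count $(n-1)(n-2)-2N=\hat N$, and $a_0=a_{\hat N}=h_n(N)>0$, the last by Theorem \ref{theo-main}(4) together with the positivity of the product formula \eqref{e-CTMorris}. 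Integrality also goes through: the Laurent expansion of $\bar \H_n$ has integer coefficients, so $h_n$ is integer-valued on nonnegative integers, hence on $\ZZ$, and the binomial transform of its values is integral. This is a genuine partial result that the paper itself does not state.

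The gaps are exactly the two claims you flag as hard, plus one inference you treat as routine that is actually false. For positivity, you propose to realize $(a_i)$ as the $h^*$-vector of a lattice polytope and invoke Stanley nonnegativity; but $h_n(t)$ is a \emph{signed} constant term, $(-1)^{(n-1)(n-2)/2}\CT_{\x}\H^{(2,1^{n-2},0)}$ by \eqref{e-CTH21}, and neither you nor the paper exhibits a lattice polytope whose Ehrhart polynomial it is --- constructing one is precisely the open problem, not a step one can defer. Moreover, even granting nonnegativity, your upgrade ``$a_i\ge 0$ plus palindromy plus $a_0>0$ implies all $a_i>0$'' is a non sequitur: the vector $(1,0,5,0,1)$ is palindromic and nonnegative with positive ends. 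For real-rootedness, both of your routes (an interlacing family extracted from the recursion \eqref{e-hrecur}, or a Gorenstein polytope with a regular unimodular triangulation) are named but not executed: no interlacing operator is produced, no interlacing hypotheses are verified, and no polytope exists to be triangulated; you concede this is ``the genuine obstacle.'' Since you derive unimodality and strong log-concavity from real-rootedness and positivity via Newton's inequalities (that implication is fine), those claims remain unproven as well. In short: you have correctly established the shape, palindromy, integrality, and positivity of the two extreme coefficients; the positivity of the interior $a_i$ and the real-rootedness --- the actual content of the conjecture --- remain open in your write-up exactly as they do in the paper.
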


We have verified Conjecture \ref{conj-h*} for $3 \le n \le 29$.
For example,
\[
h^*_5 (y) =  3 y^4 (3 y^4 + 24 y^3 + 46 y^2 + 24 y + 3).
\]
The roots of $h^*_5 (y)$ are $0$ (with multiplicity $4$), $- 3 \pm \sqrt{6}$, $- 1 \pm \frac{\sqrt{6}}{3}$.
And the other properties are easy to check.

The data of $h^*_n(y)$ for $3 \le n \le 29$ is also available at \cite{datalink}.

\medskip
\noindent \textbf{Acknowledgments:}
This work was supported by the National Natural Science Foundation of China (No. 12071311).

\end{document}